\def\m{{\mathfrak m}} 
\def\p{{\mathfrak p}}    
\def\q{{\mathfrak q}}    
\def\N{{\mathfrak N}}
\DeclareMathOperator{\Spec}{Spec}
\begin{document}

\newtheorem{theorem}{Theorem}[section]
\newtheorem{lemma}[theorem]{Lemma}
\newtheorem{proposition}[theorem]{Proposition}
\newtheorem{corollary}[theorem]{Corollary}
\newtheorem{problem}[theorem]{Problem}
\newtheorem{question}[theorem]{Question}
\newtheorem{notation}[theorem]{Notation}
\newtheorem{setting}[theorem]{Setting}

\newtheorem{example}[theorem]{Example}
\newtheorem{defi}[theorem]{Definitions}
\newtheorem{definition}[theorem]{Definition}
\newtheorem{remark}[theorem]{Remark}
\newtheorem{ex}[theorem]{Example}
\renewcommand{\thedefi}{}

\long\def\alert#1{\smallskip{\hskip\parindent\vrule%
\vbox{\advance\hsize-2\parindent\hrule\smallskip\parindent.4\parindent%
\narrower\noindent#1\smallskip\hrule}\vrule\hfill}\smallskip}

\def\ff{\frak}
\def\Proj{\mbox{\rm Proj}}
\def\type{\mbox{ type}}
\def\Hom{\mbox{ Hom}}
\def\rank{\mbox{ rank}}
\def\Ext{\mbox{ Ext}}
\def\Ker{\mbox{ Ker}}
\def\Max{\mbox{\rm Max}}
\def\Min{\mbox{\rm Min}}
\def\End{\mbox{\rm End}}
\def\xpd{\mbox{\rm xpd}}
\def\Ass{\mbox{\rm Ass}}
\def\emdim{\mbox{\rm emdim}}
\def\epd{\mbox{\rm epd}}
\def\repd{\mbox{\rm rpd}}
\def\ord{{\rm ord}}
\def\hgt{{\rm ht}}

\def\edeg{{\rm e}}

\begin{frontmatter}

\title{Ideal theory of 
infinite directed unions of local \\quadratic transforms}
\author[Bill]{William Heinzer}
\ead{heinzer@purdue.edu}
\address[Bill]{Department of Mathematics, Purdue University, West Lafayette, Indiana 47907}
\author[Alan]{K. Alan Loper}
\ead{lopera@math.ohio-state.edu}
\address[Alan]{Department of Mathematics, Ohio State
University - Newark, Newark, OH 43055}
\author[Bruce]{Bruce Olberding}
\ead{olberdin@nmsu.edu}
\address[Bruce]{Department of Mathematical Sciences, New Mexico State University, Las Cruces, NM 88003-8001}
\author[Hans]{Hans Schoutens}
\ead{hschoutens@citytech.cuny.edu}
\address[Hans]{Department of Mathematics, 365 5th Avenue, The CUNY Graduate Center, New York, NY 10016 USA} 
\author[Matt]{Matthew Toeniskoetter}
\ead{mtoenisk@purdue.edu}
\address[Matt]{Department of Mathematics, Purdue University, West Lafayette, Indiana 47907}

\begin{abstract}
Let $(R, \m)$ be a  regular local ring of dimension at least 2.  
Associated to each  valuation domain birationally dominating $R$, there exists 
a unique sequence  $\{R_n\}$ of local quadratic transforms of $R$ along this valuation domain.
We consider the situation where the sequence $\{ R_n \}_{n \ge 0}$ is infinite, and 
examine ideal-theoretic properties of the integrally closed local domain  $S = \bigcup_{n \ge 0} R_n$.
Among the set of valuation overrings of $R$,  there exists a 
 unique limit point  $V$  for the sequence of order valuation rings of the $R_n$.  We prove the 
 existence of  a unique minimal proper Noetherian overring $T$ of $S$, and establish the 
 decomposition   $S = T \cap V$.     
 If $S$ is archimedian, then the complete integral closure $S^{*}$ of $S$ 
 has the form $S^{*} = W \cap T$, where $W$ is the rank $1$  valuation overring of $V$.
\end{abstract}

\begin{keyword}
Regular local ring \sep 
local quadratic transform \sep valuation ring \sep complete integral closure 
\MSC[2010]  13H05  \sep 13A15  \sep 13A18 
\end{keyword}

\end{frontmatter}

\section{Introduction}


Let $(R,\m)$ be a $d$-dimensional regular local ring with $d \geq 2$. The morphism $\phi : {\rm Proj} \: R[\m t] \rightarrow \Spec R$ defines the blow-up of the maximal ideal $\m$ of $R$. Let $(R_1,\m_1)$ be the local ring of a point in the fiber of $\m$ defined by $\phi$. Then $R_1$, called a {\it local quadratic transform} of $R$, is a regular local ring of dimension at most $d$ that birationally dominates $R$.  Local quadratic transforms have historically played an important role in resolution of singularities and  in the understanding of regular local rings. Classically, Zariski's unique factorization theorem for ideals in a 2-dimensional regular local ring  \cite{Zar38} relies on local quadratic transforms in a fundamental way. The $2$-dimensional regular local rings birationally dominating $R$ are all iterated local quadratic transforms of $R$, and they are in one-to-one correspondence with the simple complete $\m$-primary ideals of $R$. More recently, Lipman \cite{Lip}  uses similar methods to prove a unique factorization theorem for a special class of complete ideals in regular local rings of dimension $\ge 2$.

By taking regular local rings of dimension at least 2, iteration of the process of local quadratic transforms yields an
infinite sequence $\{(R_n,\m_n)\}_{n\geq 0}$. We consider the directed union of this infinite sequence of local quadratic
transforms, and set $S = \bigcup_{n\geq 0}R_n$. Since the rings $R_n$ are local rings that are linearly ordered under domination, 
$S$ is local, and since the rings $R_n$ are integrally closed, so is $S$. However, if $S$ is not a discrete valuation ring, then $S$ is not Noetherian.
On the other hand, one may consider a valuation ring $(V, {\mathfrak{N}} )$ that dominates $R$. There is a unique local quadratic transform $R_1$ of $R$ that is dominated by $V$, called the {\it local quadratic transform of R along $V$}. If $V$ is the order valuation ring of $R$, then $R_1 = V$, but otherwise, one may take the local quadratic transform $R_2$ of $R_1$ along $V$. Specifically, $R_1 =   R[\m/x]_{\N \cap R[\m/x]}$, where $x \in \m$ is such that $x V = \m V$.

 Iterating this process yields a possibly infinite sequence $\{(R_n,\m_n)\}$ of local quadratic transforms, where this process terminates if and only if $V$ is the order valuation ring of some $R_n$. Abhyankar \cite[Proposition 4]{Abh} proves   that this sequence is finite if and only if the transcendence degree of $V/{\ff N}$ over $R/\m$ is $d-1$ (that is, by the dimension formula \cite[Theorem~15.5]{Mat}, the residual transcendence degree of $V/{\ff N}$ is as large as possible; in such a case $V$ is said to be a {\it prime divisor} of $R$). Otherwise, the induced sequence is infinite, and it is in this case that we are especially interested in this article.


  Abhyankar  \cite[Lemma~12]{Abh}  proves that if $\dim R = 2$, then the union  $\bigcup_{i \geq 0}R_i$ is equal to $V$. For the setting where $\dim R>2$, Shannon \cite{Sha} presents several examples showing that  the directed union $S =  \bigcup_{i \geq 0}R_i $ is properly contained in $V$, and in particular $S$ is not a valuation ring.    
 More generally, Lipman   \cite[Lemma 1.21.1]{Lip}  observes that   if $P$ is a nonmaximal prime ideal of 
 the  regular local ring $R$,   then there exists an  infinite sequence of local quadratic transforms of $R$
  whose union $S$  is contained in $R_P$.  Thus if  we take $P$ so that  $\hgt \:P > 1$,  then 
  $S$ cannot be a valuation ring, since the overring $R_P$ of $S$ is not a valuation ring.   
  Thus arises the question of the nature of $S$ when $S$ is not a valuation ring.

    Since Shannon's work in \cite{Sha}  has sparked the present authors' interest in this topic,  
we refer to 
 the directed union $ S =\bigcup_{i \geq 0}R_i $  of the local quadratic transforms of $R$ along a valuation ring 
  as   the {\it Shannon extension} of
 $R$ along this valuation ring. In this article we examine the nature of Shannon extensions, with special emphasis on the  ideal theory and representation  of such rings. We prove  in Theorem~\ref{flat} that if $S$ is a Shannon extension, 
  then  there exists a unique minimal proper Noetherian overring $T$ of $S$    and 
a valuation overring  $V$ of $R$   such 
 that $S = T \cap V$. The ring $T$ is even a localization of one of the $R_i$ and hence is itself a regular Noetherian domain (Theorem~\ref{hull}).
 The valuation domain $V$, which we term the boundary valuation ring of $S$, is the unique limit point in the patch topology of  the order valuation rings of the regular local rings $R_n$ (Corollary~\ref{equation V cor}).  While $V$  determines the sequence of the $R_i$'s, it is not generally unique in doing so.
 In fact, if $S$ is not a valuation ring, then there are infinitely many valuation rings that give rise to the same Shannon extension.  However, in light of its topological interpretation, the boundary valuation ring is in a sense the valuation ring that is ``preferred'' by the sequence. 
  
 From this representation we deduce in Corollary~\ref{chain}
 that the principal $N$-primary ideals of $S$ are linearly ordered with respect to inclusion.
 In Theorem~\ref{cic arch},  we use the representation of $S$ to  describe the complete integral closure $S^*$ of $S$ 
 in the case where $S$ is archimedean,  and in Theorem~\ref{cic},  we describe $S^*$ in
 the case where $S$ is not archimedean. 
 Along the way in Sections 3 and 4
 we also describe the maximal and nonmaximal prime ideals of Shannon extensions. To illustrate several of the ideas in the paper, we present details in Examples~\ref{Shannon example 4.7} and~\ref{Shannon example} about an example given by David Shannon that motivated our work in this paper.  For these examples, we explicitly describe the Noetherian hull  and the  complete integral closure  of the Shannon extension $S$. 
 
 As we show in Section 8, our representation of a Shannon extension $S$ also proves useful in determining when  $S$ is a valuation ring, since in terms of our representation theorem, this is equivalent to asking when $S$ is equal to its boundary valuation ring. 
   %
  %
%
Shannon proves in  \cite[Prop.~ 4.18]{Sha}  that if $S$ is a Shannon extension $S = \bigcup_{i}R_i$ along a valuation ring $V$ that is nondiscrete and rank $1$, then   $S = V$ if and only if for every height $1$ prime ideal $P$ of any $R_i$, we have $(R_i)_{P} \not\supseteq S$.  If this condition  holds,  Shannon says the sequence $\{R_i\}$
 {\it  switches strongly infinitely often}.   
 More recently,   in  \cite{Gra}, \cite{GMR}, \cite{GMR2}, \cite{GR},  \cite{Gra2},   
   interesting work has been done by A.  Granja,  M. C. Martinez,  C. Rodriquez and T. S\'anchez-Giralda, 
    on the question of when $S=V$.     In \cite[Theorem~13]{Gra},  Granja proves that    $S =  V$ if 
and only if either 
\begin{enumerate}
\item  the sequence $\{R_i\}$ switches strongly infinitely often,  or   
\item   there exists  a unique rank  one valuation domain $W$ such that $W$ is a localization of $R_n$ for all
large $n$.
\end{enumerate} 
In the case of item~1,  the valuation ring  $V$  has rank 1,  while in the case of item~2, 
the valuation ring $V$ has rank 2 and $V$ is contained in $W$.  In the case of item~2,  
the sequence 
$\{R_i\}$  is  said to be 
{\it height 1 directed}.   This describes the fact that $W$ is a localization of $R_n$  for all large $n$.

In this same direction, 
in Theorem~\ref{valuation criterion},  we 
 prove that $S$ is a valuation ring $\iff$   $S$ has only finitely many height $1$ 
 prime ideals  $\iff$ either (a)  $\dim S = 1$ or (b) $\dim S = 2$,  and the boundary valuation
 ring $V$ of $S$ has value group $\mathbb Z \oplus G$ ordered lexicographically,  where
 $G$ is an ordered subgroup of $\mathbb Q$.  We also show how to recover some of the results of Abhyankar and Granja from our point of view.

In general,  our notation is as in Matsumura   \cite{Mat}.  Thus a local ring need not be Noetherian.
 An element $x$ in the maximal ideal $\m$ of a regular local ring $R$ is said
to be a {\it regular parameter} if $x \not\in \m^2$.   It then follows that  the residue class ring $R/xR$ is again a regular local 
ring.  
We refer to an extension ring $B$ of an integral domain $A$ as an {\it overring of} $A$ if $B$ is a subring of the quotient field of $A$.  If, in addition,  $A$ and $B$ are local and the inclusion map $A \hookrightarrow B$ is a 
local homomorphism,  we say that $B$ {\it birationally dominates}  $A$.

\section{Essential prime divisors of a  sequence of quadratic transforms} 
 
 


Let $\{R_i\}$ be an infinite  sequence of local quadratic transforms of a regular local ring $R$.  
In this section we consider  the set consisting of the DVRs that are essential prime divisors of infinitely many of 
the $R_i$.  We see later in Proposition~\ref{localizations of S}  that if  the Shannon extension  $S = \bigcup_i R_i$ is not a rank 1 valuation ring,  then this set is precisely the set of localizations of $R$ at the height $1$ prime ideals of $S$. 
A key technical tool in describing these  DVRs, as well as one that we use heavily throughout the rest of the paper, is that of  the transform of an ideal. 
We first review this concept.

Let $R \subseteq S$ be Noetherian UFDs with $S$ an overring of $R$, and 
let    $I$ be  a nonzero ideal of   $R$.  Then the ideal   $I$ can be written uniquely as $I = P_1^{e_1} \cdots P_n^{e_n}J$, where the $P_i$ are principal prime ideals of $R$, the $e_i$ are positive integers and $J$ is an ideal of $R$  not contained in a principal ideal of $R$ \cite[p.~206]{Lip}.  
  For each $i$, set $Q_i = P_i({R \setminus P_i})^{-1}S  \cap S$.  If $S \subseteq R_{P_i}$, then $R_{P_i} = S_{Q_i}$,  and otherwise $Q_i = S$.
  The {\it  transform}\footnote{We are following Lipman's terminology in \cite{Lip}. In the terminology  of Hironaka \cite[Definition 5, p.~213]{Hir}, our notion of the transform is the {weak transform} of $I$. If $I$ is a nonzero principal ideal of the regular local ring $R$, then the weak and strict transforms of $I$ coincide. Thus, in \cite[p.~701]{Gra}, our notion of the transform coincides with the strict transform of $I$, since Granja restricts to principal ideals.} of
   $I$ in $S$ is the ideal $$I^S = Q_1^{e_1} \cdots Q_n^{e_n}(JS)(JS)^{-1}.$$   
Alternatively, $I^S = Q_1^{e_1} \cdots Q_n^{e_n}K$, where $K$ is the unique ideal of $S$ such that both $JS = xK$ for some $x \in S$ and 
$K$ is not contained in a proper principal ideal of $S$.

We recall the following useful result about transforms.  

  \begin{lemma} \label{Lipman} {\em (Lipman \cite[Lemma 1.2 and Proposition 1.5]{Lip})} Let  $R \subseteq S \subseteq T$ 
  be Noetherian UFDs   with  $S$ and $T$  overrings of $R$.  Then
   
  \begin{itemize}
\item[{\em (1)}] $(I^S)^T = I^T$ for all ideals $I$ of $R$.
\item[{\em (2)}] $(IJ)^S = I^SJ^S$ for all ideals $I$ and $J$ of $R$.
\item[{\em (3)}] Suppose that $P$ is a nonzero  principal prime ideal of $R$. Then the following are equivalent.
\begin{itemize}
\item[{\em (i)}] $P^S \ne S$. 
\item[{\em (ii)}] $S \subseteq R_P$. 
\item[{\em (iii)}] $P^S$ is the unique prime ideal $Q$ in $S$ such that $Q \cap R = P$; and $R_P = S_Q$.  
\end{itemize}
\end{itemize}
\end{lemma}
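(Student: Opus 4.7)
The plan is to prove (3) first, since both (1) and (2) rest on understanding the transform of a principal prime. For $I = P = pR$ the factorization is trivial ($J = R$), and since $(RS)(RS)^{-1} = S$, we have $P^S = Q_1 = PR_P \cdot S \cap S$. For (3)(ii) $\Rightarrow$ (iii) I would first observe that $S \subseteq R_P$ forces $PR_P \cdot S = PR_P$ inside the common fraction field $K$, so $Q_1 = PR_P \cap S$ is prime with $Q_1 \cap R = P$; the identification $S_{Q_1} = R_P$ then follows by showing $R_P \subseteq S_{Q_1}$ directly (any $r/s \in R_P$ with $s \notin P$ has $s \notin Q_1$ since $Q_1 \cap R = P$) and $S_{Q_1} \subseteq R_P$ by factoring $S \hookrightarrow R_P$ through its localization at $Q_1$. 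Uniqueness of $Q_1$ among primes of $S$ contracting to $P$ follows from the classification of overrings of the DVR $R_P$ inside $K$: only $R_P$ and $K$ occur. For (3)(i) $\Rightarrow$ (ii), I would argue the contrapositive: given $s = a/b \in S \setminus R_P$ in lowest terms with $b = p^k b'$ and $p \nmid b'$, the computation $a/p = s p^{k-1} b' \in S$ combined with $1/a \in R_P$ (since $p \nmid a$) yields $1/p \in R_P \cdot S$, forcing $1 \in Q_1$.

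For (2), the crucial input is that $R$ is a UFD, so the product of two ideals not contained in any proper principal ideal is again of this form (gcds of generators multiply). Writing $I = \prod P_i^{e_i} J_1$ and $J = \prod P_i^{f_i} J_2$ with $J_1, J_2$ principal-free, the factorization of $IJ$ adds exponents on the principal primes and multiplies the principal-free parts. The identity then reduces to showing $(J_1 J_2 S)(J_1 J_2 S)^{-1} = (J_1 S)(J_1 S)^{-1}(J_2 S)(J_2 S)^{-1}$, which follows from the observation that in a UFD $S$ any ideal $\mathfrak{a}$ factors as $g \cdot \mathfrak{a}'$ with $g$ the gcd of generators and $\mathfrak{a}'$ principal-free, giving $\mathfrak{a} \cdot \mathfrak{a}^{-1} = \mathfrak{a}'$ since the dual of a principal-free ideal of a UFD is $S$ itself.

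For (1), I would combine (3) with the structural calculation from (2). By (3), each non-degenerate $Q_i$ is a principal prime of $S$ with $S_{Q_i} = R_{P_i}$, so its transform to $T$ satisfies $(Q_i)^T = P_i R_{P_i} \cdot T \cap T = P_i^T$; the degenerate case $Q_i = S$ forces $(Q_i)^T = T$ as well, because $S \not\subseteq R_{P_i}$ and $T \supseteq S$ give $T \not\subseteq R_{P_i}$. For the principal-free part, writing $JS = gK$ in $S$ with $K = (JS)(JS)^{-1}$ yields $JT = g(KT)$ in $T$, so $JT$ and $KT$ have the same principal-free part, giving $(KT)(KT)^{-1} = (JT)(JT)^{-1}$. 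Multiplying these factors produces $(I^S)^T = I^T$. The main obstacle throughout is the bookkeeping between principal and principal-free factors and the careful handling of the degenerate cases where some $Q_i$ collapses to $S$; reliance on the hypothesis that $R \subseteq S \subseteq T$ share a common fraction field is essential for treating expressions like $PR_P \cdot S$ consistently as submodules of $K$.
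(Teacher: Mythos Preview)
The paper does not prove this lemma at all: it is stated with attribution to Lipman \cite[Lemma~1.2 and Proposition~1.5]{Lip} and used as a black box throughout. Your proposal therefore goes well beyond what the paper does, supplying a self-contained argument where the authors simply cite.

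Your argument is essentially correct. A few small points are worth tightening. In (3)(i)$\Rightarrow$(ii) you should make explicit the final step: from $1/a \in R_P$ and $a/p \in S$ you get $p/a \in PR_P$ and hence $1 = (p/a)(a/p) \in PR_P\cdot S$, so $P^S = S$; your phrasing ``$1/p \in R_P\cdot S$, forcing $1\in Q_1$'' hides exactly this multiplication by $p$. In (1) you should say explicitly why $\prod_{Q_i\neq S} Q_i^{e_i}\,K$ is the \emph{canonical} factorization of $I^S$ in $S$ (the $Q_i$ are distinct height-one primes because the $R_{P_i}=S_{Q_i}$ are distinct, and $K$ is principal-free, so no further principal prime can contain $I^S$); this is what licenses reading off $(I^S)^T$ factor by factor. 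In (2), the reduction to ``gcd of a product is the product of gcds'' in the UFD $S$ is the right idea, and your computation $(\mathfrak a)(\mathfrak a)^{-1}=\mathfrak a'$ for $\mathfrak a=g\mathfrak a'$ with $\mathfrak a'$ principal-free is clean; it would help to state once that $(S:\mathfrak a')=S$ for any principal-free ideal $\mathfrak a'$ of a UFD $S$, since you use this repeatedly. With these clarifications the proof stands.
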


 If  $R$ is a regular local ring and $S$ is a local quadratic transform of $R$, then the order valuation of $R$ can be used to calculate the transform of an ideal of $R$.  How to do this is indicated in Remark~\ref{GR lemma}, but first we recall the construction of the order valuation of $R$.  Let $R$ be a regular local ring with maximal ideal $\m$ and  quotient field $F$. 
For each $0 \ne x \in R $, we define $\ord_R(x) = \min\{i \mid x \in \m^i\}$, and we extend $\ord_R$ to a map from $F$ to ${\mathbb{Z}} \cup \{\infty\}$ by defining $\ord_R(0) = \infty$ and $\ord_R(x/y) = \ord_R(x) - \ord_R(y)$ for all $x,y \in R$ with $y \ne 0$.  From the fact that $R$ is a regular local ring, it follows that $\ord_R$ is a discrete rank one valuation on $F$. The valuation $\ord_R$ is the {\it order valuation} of $R$.  The valuation ring of $\ord_R$ is said to be the {\it order valuation ring} of $R$.  It follows that if $R_1$ is any local quadratic transform of $R$, then $(R_1)_{\m R_1}$ is the order valuation ring of $R$.

 \begin{remark} \label{GR lemma}  
{\em Let $R_1$ be a local quadratic transform of  a regular local ring $R$,
and let $x$ be an element of ${\m}$ such that ${\m}R_1 = xR_1$. 
If $I$ is an ideal of $R$  
and $e = \ord_{R}(I)$, then $I^{R_1} = x^{-e}IR_1$ and ${\m}^eI^{R_1} = IR_1$.  
 In \cite[p.~1349]{GMR2}, this equation is used 
to define the 
 (strict) transform of a height $1$ prime ideal in $R_1$. }
\end{remark}



\begin{definition} \label{epd def}
{\em For an integral  domain $A$, let \begin{center}$\epd(A)= \{A_P \mid P$ is a height $1$ prime ideal of $A\}$. \end{center}
The notation is motivated by the fact that if $A$ is a Noetherian 
integrally closed domain, then  $\epd(A)$ is the set of  essential prime divisors of $A$. 
 With $R$ a regular local ring, let $\{(R_i,\m_i)\}$ be a sequence of local quadratic transforms of $R$ (so that for each $i \geq 0$, ${\m}_i$ is the maximal ideal of $R_i$ and  $R_{i+1}$ is a local quadratic transform of $R_i$), and let $S = \bigcup_{i}R_i$. Define 
$$\epd(S/R) = \big\{W \in \bigcup_{i \geq 0} \epd(R_i) \mid S \subseteq W\big\}.$$ }
\end{definition}

\begin{remark}  \label{the epds}
{\em The set $\epd(S/R)$ consists of the essential prime divisors of $R$ that contain $S$
 along with the order valuation rings of any of the $R_i$ that contain $S$.
This follows, for example, from Lemma~\ref{isom off closed fiber}.

Moreover, $S$ is a rank $1$ valuation domain if and only 
if $\epd (S / R) = \emptyset$ \cite[Proposition 4.18]{Sha},\footnote{In this case, the sequence $\{ R_i \}$  switches  strongly infinitely often.}
and $S$ is a rank $2$ valuation domain if and only if $\epd (S / R)$ consists of a single 
element \cite[Theorem 13]{Gra}.\footnote{In this case, the sequence $\{ R_i \}$ is height 1 directed.}
If $S$ is not a rank $1$ valuation domain, then   Proposition~\ref{localizations of S}  implies that 
$\epd(S/R) = \epd(S)$.}
\end{remark} 

\begin{notation} \label{ideal notation}
{\em In the setting of Definition~\ref{epd def},  there is naturally associated to the sequence 
$\{(R_i, \m_i)\}$ a  sequence  $\{\ff I_i\}$
 of ideals of $R$, 
where each $i \geq 0$, 
\begin{center}
$
{\ff I}_i ~ =  ~R ~ \cap ~    {\m}_0{\m}_1 \cdots {\m}_{i}R_{i+1}. $ 
\end{center}}
\end{notation}
  

\begin{lemma} \label{infinite} 
In the setting of Notation~\ref{ideal notation},
let $P$ be a  height 1 prime ideal of $R$ generated by a regular
 parameter of $R$.  Then  for $k \geq 1$, $R_k \subseteq R_P$ if and only if $P \subseteq {\ff I}_k$.  Thus 
 $S \subseteq R_P$ if and only if $P \subseteq \bigcap_{k > 0 }{\ff I}_k$.

\end{lemma}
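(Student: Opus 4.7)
The plan is to reformulate $R_k \subseteq R_P$ using the transform $P^{R_k}$ of the principal prime $P$, and then match the resulting condition with $P \subseteq {\ff I}_k$. By Lemma~\ref{Lipman}(3), $R_k \subseteq R_P$ if and only if $P^{R_k} \neq R_k$; since $R_i \subseteq R_k$ for each $i \leq k$, this further forces $P^{R_i} \neq R_i$ for every $i \leq k$. Using Remark~\ref{GR lemma} together with Lemma~\ref{Lipman}(1), I would compute these transforms iteratively: set $y_0 = x$, and $y_i = y_{i-1}/x_{i-1}^{e_{i-1}}$ with $e_{i-1} = \ord_{R_{i-1}}(y_{i-1})$, so that $P^{R_i} = y_i R_i$. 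Since $x$ is a regular parameter, $e_0 = 1$, and the condition $R_k \subseteq R_P$ becomes $y_i \in \m_i$ for all $i \leq k$.

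The main obstacle I expect is showing that whenever $y_i \in \m_i$, in fact $y_i \notin \m_i^2$, i.e.\ $e_i = 1$. The key observation is that $y_i = y_{i-1}/x_{i-1}$ appears as one of the new coordinates in $R_{i-1}[\m_{i-1}/x_{i-1}]$, because $y_{i-1}$ can be taken to lie in a regular system of parameters of $R_{i-1}$ starting with $x_{i-1}$. Reducing modulo $x_{i-1}$ yields a polynomial ring $(R_{i-1}/\m_{i-1})[y_i,\dots]$ in which $\bar y_i$ is one of the indeterminates, and the image of $\m_i$ is a maximal ideal containing $\bar y_i$; by a minimal-generator dimension count in the regular localization, $\bar y_i$ must be a regular parameter. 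Adjoining $x_{i-1}$ then produces a regular system of parameters of $R_i$ containing $y_i$, so $y_i \notin \m_i^2$.

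With $e_i = 1$ for all $i < k$ (when $R_k \subseteq R_P$), the inductive description simplifies to the clean equality $y_0 = x_0 x_1 \cdots x_{k-1} y_k$ in $R_k$. Since $\m_j R_{k+1} = x_j R_{k+1}$ for each $j \leq k$, the condition $P \subseteq {\ff I}_k$ reads as $x \in x_0 x_1 \cdots x_k R_{k+1}$; substituting for $y_0$ and cancelling, this reduces to $y_k \in x_k R_{k+1}$. Because $R_{k+1}$ dominates $R_k$ and $x_k R_{k+1} = \m_k R_{k+1}$, intersecting with $R_k$ shows this is equivalent to $y_k \in \m_k$, i.e.\ to $R_k \subseteq R_P$. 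For the converse (if $R_k \not\subseteq R_P$), one traces the analogous computation at the smallest $j \leq k$ with $y_j \notin \m_j$, so $y_j$ is a unit in $R_j$; then $x/(x_0 \cdots x_k)$ equals a unit of $R_{k+1}$ times $1/(x_j \cdots x_k)$, which is not in $R_{k+1}$ since $x_j \cdots x_k \in \m_{k+1}$, giving $P \not\subseteq {\ff I}_k$. The final statement about $S$ is then immediate: $S = \bigcup_k R_k$ gives $S \subseteq R_P$ iff $R_k \subseteq R_P$ for every $k \geq 1$, iff $P \subseteq \bigcap_{k>0} {\ff I}_k$.
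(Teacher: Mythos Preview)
Your proof is correct and follows essentially the same route as the paper: both reformulate $R_k \subseteq R_P$ via Lemma~\ref{Lipman}(3) as $P^{R_k} \ne R_k$, iterate Remark~\ref{GR lemma} with transitivity to obtain $PR_k = \m_0\m_1\cdots\m_{k-1}P^{R_k}$, and then compare with the definition of ${\ff I}_k$. Your explicit verification that each $y_i$ remains a regular parameter (so that all the exponents in the iterated formula are $1$) and your separate treatment of the converse via the first index $j$ at which $y_j$ becomes a unit are more detailed than the paper's terse ``inductive argument''; the regular-parameter step you isolate is essentially the content of the paper's subsequent Lemma~\ref{proximate2}.
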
 

\begin{proof}
Suppose that $R_k \subseteq R_P$. An inductive argument using Lemma~\ref{Lipman}(3) shows that $P^{R_k}$ is a prime ideal of $R_k$, and an inductive argument using 
%
 Remark~\ref{GR lemma} and the transitivity of the transform (Lemma~\ref{Lipman}(1)) shows that  
\begin{eqnarray} \label{GR lemma eqn} {\m}_0 {\m}_1 \cdots {\m}_{k-1}P^{R_{k}} =
 PR_{k}.\end{eqnarray} Therefore,
since $P^{R_k} \subseteq {\m}_kR_{k+1}$, we have  
     $P \subseteq  {\m}_0 {\m}_1 \cdots {\m}_{k-1}{\m}_kR_{k+1}$, from which we conclude that $P \subseteq  {\ff I}_k$.  

Conversely, suppose that $P \subseteq {\ff I}_k$. Along with  (\ref{GR lemma eqn}), this implies  
$${\m}_0 {\m}_1 \cdots {\m}_{k-1}P^{R_{k}}  \subseteq  {\m}_0 {\m}_1 \cdots {\m}_kR_{k+1}.
 $$ Since ${\m}_iR_{i+1}$ is a principal ideal of $R_{i+1}$ for each $i$, we conclude that $P^{R_k} \subseteq {\m}_kR_{k+1}$. Therefore, by Lemma~\ref{Lipman}(3), $R_k \subseteq R_P$.  
\end{proof}

The following classical fact is used without proof in \cite[Lemma 11]{GMR}. Because it will be important in what follows, 
we include a proof.  The proof illustrates  calculations  involved in local quadratic transforms. 
\begin{lemma}  \label{proximate2}
Let $(R, \m)$ be a regular local ring and let $R_1$ be a local quadratic transform of $R$.
We have  $\m R_1 = z R_1$ for some $z \in \m  \setminus \m^2$.   Assume that  $( x_1, \ldots, x_s) R$ is a 
regular prime ideal of $R$ of height $s$ such that $R_{x_i R} \supset R_1$ for each $i \in \{1, \ldots, s\}$.
Then:
\begin{enumerate}
\item  $(z, x_1, \ldots, x_s)R$  is a regular prime ideal of $R$ of height $s+1$,  and 
\item 
$( z, \frac{x_1}{z}, \ldots, \frac{x_s}{z} ) R_1$ is a regular prime ideal of $R_1$ of height $s+1$.
\end{enumerate}
\end{lemma}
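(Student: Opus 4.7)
The plan is twofold: first use the hypothesis $R_{x_iR}\supset R_1$ to show each $y_i:=x_i/z$ lies in $\m_1$, then prove (1) by a direct linear-independence argument modulo $\m^2$, and prove (2) by extending $(z,x_1,\ldots,x_s)$ to a regular system of parameters and exploiting the resulting polynomial structure of $R[\m/z]/zR[\m/z]$.

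To locate the $y_i$, observe that since $\m R_1=zR_1$, each $x_i\in\m$ equals $zy_i$ for some $y_i\in R_1$, and Remark~\ref{GR lemma} identifies $y_iR_1$ with the transform of $x_iR$ in $R_1$; Lemma~\ref{Lipman}(3) together with $R_1\subset R_{x_iR}$ then forces $y_iR_1$ to be a proper prime, so $y_i\in\m_1$. To prove (1), it suffices to show $z\notin(x_1,\ldots,x_s)R+\m^2$, since $x_1,\ldots,x_s$ are already independent in $\m/\m^2$. If a relation $z=\sum_i\alpha_ix_i+m$ with $\alpha_i\in R$ and $m\in\m^2$ existed, substituting $x_i=zy_i$ and using $\m^2R_1=z^2R_1$ would give $z\bigl(1-\sum_i\alpha_iy_i\bigr)\in z^2R_1$, hence $\sum_i\alpha_iy_i\equiv 1\pmod{\m_1}$, contradicting $y_i\in\m_1$.

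For (2), part (1) lets me extend $z,x_1,\ldots,x_s$ to a regular system of parameters $z_1=z,\,z_2=x_1,\ldots,z_{s+1}=x_s,\,z_{s+2},\ldots,z_d$ of $R$. Setting $w_i=z_i/z$ for $i\ge 2$ gives $R[\m/z]=R[w_2,\ldots,w_d]$, and modulo $z$ this becomes the polynomial ring $k[w_2,\ldots,w_d]$ over $k=R/\m$. Writing $R_1=R[\m/z]_\n$ with $\n\supset zR[\m/z]$ and setting $\mathfrak p=\n/zR[\m/z]$, the quotient $R_1/zR_1$ identifies with $k[w_2,\ldots,w_d]_{\mathfrak p}$; under this identification $y_i=w_{i+1}$ for $i=1,\ldots,s$. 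The ideal $(w_2,\ldots,w_{s+1})k[w_2,\ldots,w_d]_{\mathfrak p}$ is a regular prime of height $s$ with polynomial quotient $k[w_{s+2},\ldots,w_d]_{\mathfrak p'}$, so $R_1/(z,y_1,\ldots,y_s)R_1\cong k[w_{s+2},\ldots,w_d]_{\mathfrak p'}$ is a regular local domain of dimension $\dim R_1-s-1$, which forces $(z,y_1,\ldots,y_s)R_1$ to be a regular prime of height $s+1$.

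The main obstacle lies in part (2): the real payoff of (1) is that it secures a regular system of parameters in which the $y_i$ become actual polynomial variables in the presentation of $R[\m/z]/zR[\m/z]$. Without this coordinate choice one has to argue separately that each $y_i$ avoids $\m_1^2$ and that the $y_i$ together with $z$ are linearly independent modulo $\m_1^2$, and these facts are not transparent from the local structure of $R_1$ alone.
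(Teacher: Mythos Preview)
Your proof is correct and follows essentially the same approach as the paper's. Both arguments first use Lemma~\ref{Lipman}(3) to place the $y_i=x_i/z$ in $\m_1$, prove (1) by the same contradiction (your relation $z=\sum\alpha_ix_i+m$ with $m\in\m^2$ is exactly the paper's $z+f\in(x_1,\ldots,x_s)R$ with $f\in\m^2$, and dividing by $z$ yields the same unit-in-$\m_1$ contradiction), and prove (2) by extending $z,x_1,\ldots,x_s$ to a regular system of parameters and invoking the polynomial presentation $R_1/zR_1\cong k[\overline{w_2},\ldots,\overline{w_d}]_{\mathfrak p}$.
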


\begin{proof}
Since $R_{x_i R} \supset R_1$,  the transform of $x_i R_i$ in $R_1$, which is $\frac{x_i}{z} R_1$, is a regular prime of $R_1$. We  first prove  that $(z, x_1, \ldots, x_s) R$ is a regular prime ideal of $R$ of height $s +  1$.
Assume,  by way of contradiction,  that $z + f \in (x_1, \ldots, x_s) R$ for some $f \in \m^2$.
Then  $\frac{f}{z} \in \m_1$, so $1 + \frac{f}{z}$ is a unit in $R_1$, but $1 + \frac{f}{z} \in (\frac{x_1}{z}, \ldots, \frac{x_s}{z}) R_1$, contradicting the fact that $\frac{x_i}{z} \in \m_1$ for each $i$.  This proves item 1.

We may extend the ideal  $(z, x_1, \ldots, x_s)R$ to  a minimal generating set for $\m$,  say 
 $\m  =  (z, x_1, \ldots, x_s, y_1, \ldots, y_t) R$.
By construction of local quadratic transform, $R_1 / z R_1$ is isomorphic to the localized polynomial ring  
	$$R_1 / z R_1 \cong k \left[ \overline{\frac{x_1}{z}}, \ldots, \overline{\frac{x_s}{z}}, \overline{\frac{y_1}{z}}, \ldots, \overline{\frac{y_t}{z}} \right]_{\p}$$
where $k = R / \m$ and $\p$ is some prime ideal containing $\overline{\frac{x_1}{z}}, \ldots, \overline{\frac{x_s}{z}}$.
Thus $\overline{\frac{x_1}{z}}, \ldots, \overline{\frac{x_s}{z}}$ generate a  regular prime of $R_1 / z R_1$ of
 height $s$.  It follows that 
 $z, \frac{x_1}{z}, \ldots, \frac{x_s}{z}$ generate a regular prime ideal  of $R_1$  of  height $s+1$.
\end{proof}

\begin{proposition}
\label{new order valuations}  \label{finitely many order valuations} 
  In the setting of Definition~\ref{epd def},  
 the set  $\epd(S/R)$ contains at most $\dim R- 1$ of the order valuation rings of the quadratic sequence $\{R_i\}$.
 \end{proposition}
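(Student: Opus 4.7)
Suppose $\epd(S/R)$ contains $s$ distinct order valuation rings, labeled $W_{i_1},\ldots,W_{i_s}$ with $i_1 < \cdots < i_s$, where for each $k$ the order valuation ring of $R_k$ is $W_k = (R_{k+1})_{z_kR_{k+1}}$ for a fixed generator $z_k$ of $\m_kR_{k+1}$. Note that $z_k$ is a regular parameter of $R_k$ by construction and of $R_{k+1}$ as well, since $R_{k+1}/z_kR_{k+1}$ is a localization of a polynomial ring over $R_k/\m_k$. Because each $W_{i_j}\supseteq S\supseteq R_n$ for every $n$, Lemma~\ref{Lipman}(3) tells us that the transform of the principal prime $z_{i_j}R_{i_j+1}$ in any later $R_n$ is itself a principal prime $Q_j\subset R_n$ with $(R_n)_{Q_j}=W_{i_j}$, and the $Q_j$ remain distinct since the $W_{i_j}$ are.

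I plan to prove by induction on $s$ the following claim: there exist $y_1,\ldots,y_s \in R_{i_s+1}$ such that $(y_1,\ldots,y_s)R_{i_s+1}$ is a regular prime of height $s$ in $R_{i_s+1}$ with $(R_{i_s+1})_{y_jR_{i_s+1}}=W_{i_j}$ for each $j$. The base case $s=1$ is handled by taking $y_1 = z_{i_1}\in R_{i_1+1}$. The key tool for both the inductive step and the ``propagation'' needed along the way is Lemma~\ref{proximate2}: given such data $(y_1,\ldots,y_s)R_k$ in any $R_k$ (with $k\geq i_s+1$), the hypothesis $(R_k)_{y_jR_k}=W_{i_j}\supseteq S\supseteq R_{k+1}$ holds automatically, and the conclusion gives that $(z_k,\, y_1/z_k,\ldots,\, y_s/z_k)R_{k+1}$ is a regular prime of height $s+1$ in $R_{k+1}$. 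Discarding the generator $z_k$ leaves $s$ elements that remain part of a regular system of parameters of $R_{k+1}$, so they generate a regular prime of height $s$; Lemma~\ref{Lipman}(3) ensures that each $(R_{k+1})_{(y_j/z_k)R_{k+1}}$ still equals $W_{i_j}$. To advance from $s$ to $s+1$ in the induction, propagate the height-$s$ data from $R_{i_s+1}$ step-by-step up to $R_{i_{s+1}}$ in this manner, then invoke Lemma~\ref{proximate2} at the transform $R_{i_{s+1}}\to R_{i_{s+1}+1}$ while \emph{retaining} the new generator $z_{i_{s+1}}$, whose localization is precisely $W_{i_{s+1}}$.

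Once the claim is in hand, I will apply Lemma~\ref{proximate2}(1) one final time, taking $R=R_{i_s+1}$, $R_1=R_{i_s+2}$, $z=z_{i_s+1}$, and $(x_1,\ldots,x_s)=(y_1,\ldots,y_s)$; the hypothesis is again automatic, so the conclusion is that $(z_{i_s+1},y_1,\ldots,y_s)R_{i_s+1}$ is a regular prime of height $s+1$ in $R_{i_s+1}$. Since the height of any ideal is bounded by the Krull dimension of the ambient ring, $s+1 \leq \dim R_{i_s+1}\leq \dim R$, giving $s\leq \dim R-1$. The main obstacle is not the algebra of any single step but the bookkeeping of the inductive invariant: one must keep the correspondence between each generator $y_j$ and its prescribed valuation ring $W_{i_j}$ straight through both the propagation across indices (where a freshly created generator $z_k$ is discarded) and the dimension-raising step at $R_{i_{s+1}}\to R_{i_{s+1}+1}$ (where the freshly created generator is kept as the new principal prime corresponding to $W_{i_{s+1}}$); Lemma~\ref{Lipman}(3) is what guarantees this correspondence is preserved at every stage.
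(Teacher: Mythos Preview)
Your argument is correct. Its core coincides with the paper's: both iterate Lemma~\ref{proximate2} to show that the transforms in $R_j$ (with $j=i_s+1$) of the centers of the chosen order valuation rings form part of a regular system of parameters there. You spell this iteration out carefully (your ``propagation'' steps together with the dimension-raising step), whereas the paper compresses it into a single sentence invoking Lemma~\ref{proximate2}. Where the two arguments genuinely diverge is the endgame. The paper proceeds by contradiction: assuming $d=\dim R$ order valuation rings lie in $\epd(S/R)$, the $d$ transforms must generate all of $\m_j$, and then Lemma~\ref{infinite} (via the ideals $\mathfrak{I}_k$) forces $\m_j\subseteq\m_j\m_{j+1}R_{j+2}$, which is absurd. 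You instead finish directly: having built a regular prime of height $s$ in $R_{i_s+1}$, one further application of Lemma~\ref{proximate2}(1) adjoins $z_{i_s+1}$ to give a regular prime of height $s+1$, so $s+1\le\dim R_{i_s+1}\le\dim R$. Your route is a bit more economical in that it never touches Lemma~\ref{infinite} or the ideals $\mathfrak{I}_k$ at all, and it immediately yields the refinement $s\le\dim R_{i_s+1}-1$ that the paper records separately in Remark~\ref{dim remark}; the paper's route is terser on the page because it leaves the inductive use of Lemma~\ref{proximate2} implicit.
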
  
 
 \begin{proof} 
For each $i$, let $V_i$ be the order valuation ring for $R_i$.   
 Let $d = \dim R $, and suppose by way of contradiction that $V_{i_1}, \ldots, V_{i_d}$, with $i_1 < \cdots < i_d$, contain $S$.  For each $k  \in \{1,\ldots,d\}$, let   $P_{i_k}$ denote the center of $V_{i_k}$ in $R_{i_k + 1}$. Let  
 $j = i_d + 1$, and let $P_{i_k}^{R_j}$ be the transform of $P_{i_k}$ in $R_j$. By Lemma~\ref{Lipman}(3), $P_{i_1}^{R_j},\ldots, P_{i_d}^{R_j}$ are proper ideals of $R_j$. For each $k$, write $P_{i_k}^{R_j} = x_kR_j$. By 
 Lemma~\ref{proximate2},  the elements $x_1,\ldots,x_d$ form part of a regular sequence of parameters of $R_j$. Since $\dim R_j  \le \dim R$ \cite[Theorems 15.5, p.~118]{Mat} and 
 $d = \dim R$, the elements $x_1,\ldots,x_d$  generate the maximal ideal of $R_j$.  Also by Lemma~\ref{infinite}, each $P^{R_j}_{i_k}\subseteq \bigcap_{k > j}{\ff I}_k$, and hence ${\m}_j = \bigcap_{k > j}{\ff I}_k$, which in turn forces ${\m}_j \subseteq {\m}_{j}{\m}_{j+1}R_{j+2}$. Since ${\m}_jR_{j+2}$ is a principal ideal of $R_{j+2}$, we have $R_{j+2} \subseteq {\m}_{j+1}R_{j+2}$, a contradiction.   
 \end{proof}
 
 \begin{remark} \label{dim remark}
 {\em The bound of $\dim R - 1$ in Proposition~\ref{new order valuations} can be refined with the following observation. In the setting of Definition~\ref{epd def}, $\dim R_{i} - \dim R_{i+1}$ is equal to the transcendence degree of the residue field of $R_{i+1}$ over that of  $R_i$ \cite[(1.4.2)]{Abh2}.   Thus, for each $i$, $\dim R_i \geq \dim R_{i+1}$, so that there is positive integer $d\leq \dim R$ with $d = \dim R_i$ for all $i \gg 0$.  The proof of Proposition~\ref{new order valuations} shows that $\epd(S/R)$ contains at most $d-1$ valuation rings. Moreover, since $S$ is an overring of $R$, we have $\dim S \leq d$ \cite[Theorem 15.5, p.~118]{Mat}. 
   }
 \end{remark}

\section{The maximal ideal of a Shannon extension}

As a directed union of local rings, a Shannon extension $S$ is local.
In this section we focus on the maximal ideal $N$ of $S$ and show  that either $N$ is principal or idempotent (Proposition~\ref{maximal ideal}), and that in either case, $N$ is the radical of a principal ideal (Proposition~\ref{flat lemma}). 

\begin{setting} \label{setting 1} {\em We make the following assumptions throughout the rest of the paper. 
\begin{itemize}
\item[{(1)}] $(R, \m)$ is a regular local ring with quotient field $F$ such that $\dim R \ge 2$.

\item[{(2)}] 
$\{(R_i, \m_i)\}$ is an infinite sequence of local quadratic transforms of regular local rings starting from $R_0 = R$.
That is, for each $i > 0$, $R_{i}$ is a local quadratic transform of $R_{i-1}$, so $R_i$ is a regular local ring, $R_{i-1} \subsetneq R_i$, and, by Remark~\ref{dim remark}, $\dim R_{i-1} \ge \dim R_i \ge 2$.

\item[{(3)}] $S = \bigcup_{i = 0}^\infty R_i$ is the Shannon extension of $R$ along $\{R_i\}$ and $N=\bigcup_{i=0}^\infty {\m}_i$ is the maximal ideal of $S$.

\item[{(4)}] For each $i \geq 0$, $\ord_i:F \rightarrow {\mathbb{Z}} \cup \{\infty\}$ represents the order valuation of $R_i$ and $V_i = \{q \in F \mid \ord_i(q) \geq 0\}$ is the corresponding valuation ring.

\end{itemize}
 }
\end{setting}

Lemma~\ref{isom off closed fiber} is well known.
The geometric content of the lemma is that blowing up the maximal ideal $\m$ is an isomorphism outside of the fiber over $\m$.

\begin{lemma}\label{isom off closed fiber}
Assume Setting~\ref{setting 1}.  If $P_1$ is a prime ideal of $R_1$ such that $P:=P_1 \cap R$ is a nonmaximal prime ideal of $R$, then 
 $R_{P} = (R_1)_{P_1}$.
\end{lemma}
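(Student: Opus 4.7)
The plan is to prove both inclusions of $R_P = (R_1)_{P_1}$, with the containment $(R_1)_{P_1} \subseteq R_P$ being the substantive direction. The reverse inclusion $R_P \subseteq (R_1)_{P_1}$ is immediate: since $P_1 \cap R = P$, any $s \in R \setminus P$ satisfies $s \notin P_1$, so the universal property of localization applied to $R \hookrightarrow R_1$ yields the inclusion.

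For the harder direction, I would work with the representation $R_1 = R[\m/x]_{\M}$, where $x \in \m \setminus \m^2$ is chosen so that $\m R_1 = xR_1$ and $\M$ is the prime of $R[\m/x]$ contracting to the maximal ideal of $R_1$. The first step is to show $x \notin P$: otherwise $xR_1 \subseteq P_1$, so $\m R_1 = xR_1 \subseteq P_1$, forcing $\m = \m R_1 \cap R \subseteq P_1 \cap R = P$ and contradicting that $P$ is nonmaximal. Thus $x$ is a unit in $R_P$, and every $m/x$ with $m \in \m$ lies in $R_P$, so $R[\m/x] \subseteq R_P$.

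The heart of the proof, and the main obstacle, is to verify that every element of $R[\m/x] \setminus \M$ is a unit in $R_P$, equivalently, that $R[\m/x] \cap PR_P \subseteq \M$. The key trick exploits the primality of $P_1$: for $b \in R[\m/x] \cap PR_P$, write $b = p/s$ with $p \in P$ and $s \in R \setminus P$, so $bs = p \in P \subseteq P_1$; since $s \notin P_1$, primality of $P_1$ forces $b \in P_1$. But $P_1 \subseteq \M R_1$, and standard localization theory yields $\M R_1 \cap R[\m/x] = \M$, so $b \in \M$. This proves $R_1 \subseteq R_P$, and a parallel argument (were $s \in R_1 \setminus P_1$ to lie in $PR_P$, the same fractional representation would put $s$ in $P_1$) shows that $R_1 \setminus P_1$ consists of units of $R_P$, giving $(R_1)_{P_1} \subseteq R_P$.
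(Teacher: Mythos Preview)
Your proof is correct, but it takes a more hands-on route than the paper's. After establishing that $x \notin P$ (which you do in the same way), the paper simply observes that both $R_P$ and $(R_1)_{P_1}$ are localizations of the single ring $R[1/x]$: the first because $x \notin P$, the second because $R_1$ is a localization of $R[\m/x] \subseteq R[1/x]$ and $x \notin P_1$, so $(R_1)_{P_1}$ is a further localization of $R[\m/x][1/x] = R[1/x]$. Once both are localizations of a common domain and one birationally dominates the other, equality is immediate. Your argument instead stays at the level of $R[\m/x]$ and verifies directly, via the primality of $P_1$, that the denominators needed to form $R_1$ and then $(R_1)_{P_1}$ all lie outside $PR_P$. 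The paper's approach is shorter and more conceptual, packaging the work into the single observation about $R[1/x]$; your approach is more elementary and self-contained, avoiding the (easy but unstated) identity $R[\m/x][1/x] = R[1/x]$ and the general lemma about dominating localizations.
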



\begin{proof} 
There exists a regular parameter $x$ of $R$ such that $R_1$ is a localization of $R[\m/x]$ at a prime ideal 
$Q_1$. If $x \in P$, then ${{\ff m}}R_1 = xR_1 \subseteq  P_1$, and hence ${{\ff m}} = P_1 \cap R$, a contradiction to the assumption that $P=P_1 \cap R$ is a nonmaximal prime ideal of $R$.  
Thus  $x \not\in P$.  It follows that both $R_P$ and $(R_1)_{P_1}$ are localizations of $R[1/x]$.  Since  $(R_1)_{P_1}$
birationally   dominates $R_P$,  we have that  $R_{P} = (R_1)_{P_1}$.
\end{proof}

We see in the next proposition that a Shannon extension has an isolated singularity, in the sense that every non-closed point of $\Spec \: S$ is nonsingular. A stronger version of the proposition is proved in Theorem~\ref{hull}(1), which asserts that the punctured spectrum of $S$ is  a localization  of  
$R_i$ for sufficiently large $i$. 

\begin{proposition}\label{localizations of S}
Assume Setting~\ref{setting 1}.
If $P$ is a nonzero nonmaximal prime ideal of $S$, then $S_P = (R_i)_{P \cap R_i}$ for $i \gg 0$,  and hence $S_P$ is  a regular local ring.
\end{proposition}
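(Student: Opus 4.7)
The plan is to show that eventually $P \cap R_i$ becomes nonmaximal in $R_i$, and then to iterate Lemma~\ref{isom off closed fiber} starting from that index. The equality $S_P = (R_{i_0})_{P \cap R_{i_0}}$ then follows because localization commutes with directed unions, and regularity of $S_P$ follows because it is a localization of the regular local ring $R_{i_0}$.

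First I would pick a witness to the nonmaximality of $P$ in $S$: since $P \subsetneq N$ there is some $y \in N \setminus P$, and because $N = \bigcup \m_i$ we have $y \in \m_{i_0}$ for some index $i_0$. Because each inclusion $R_i \hookrightarrow R_{i+1}$ is a local homomorphism (a local quadratic transform birationally dominates), $y \in \m_i$ for every $i \ge i_0$. On the other hand $y \notin P$ forces $y \notin P \cap R_i$, so $P \cap R_i$ is a nonmaximal prime of $R_i$ for every $i \ge i_0$. In particular $P \cap R_i \ne 0$ as well (since $P \ne 0$ forces $P \cap R_i \ne 0$ for $i$ large enough, absorbing $i_0$ into a larger index if necessary).

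Next I would apply Lemma~\ref{isom off closed fiber} to the pair $R_i \subset R_{i+1}$ with $P_1 = P \cap R_{i+1}$ and $P = P \cap R_i$. The hypothesis $P \cap R_i$ nonmaximal in $R_i$ is exactly what was just established, so the lemma yields
\[
(R_i)_{P \cap R_i} \;=\; (R_{i+1})_{P \cap R_{i+1}} \qquad \text{for all } i \ge i_0.
\]
Thus the ascending chain of localizations stabilizes from index $i_0$ onward. Combined with the fact that localization commutes with directed unions,
\[
S_P \;=\; \bigcup_{i \ge 0} (R_i)_{P \cap R_i} \;=\; (R_{i_0})_{P \cap R_{i_0}},
\]
which gives the displayed equality for all $i \gg 0$. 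Finally, since $R_{i_0}$ is a regular local ring and a localization of a regular local ring at a prime is regular, $S_P$ is a regular local ring.

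I do not anticipate a serious obstacle: the only subtle point is ensuring that once $P \cap R_{i_0}$ is nonmaximal, the same persists for all larger $i$, which is handled by the single observation that local quadratic transforms dominate locally so $\m_{i_0} \subseteq \m_i$ for $i \ge i_0$. Everything else is a direct bookkeeping application of Lemma~\ref{isom off closed fiber} and the general fact $(\bigcup R_i)_P = \bigcup (R_i)_{P \cap R_i}$.
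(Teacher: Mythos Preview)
Your proof is correct and follows essentially the same approach as the paper's: both find an index past which $P\cap R_i$ is nonmaximal, iterate Lemma~\ref{isom off closed fiber} to stabilize the chain of localizations, and conclude via $S_P=\bigcup_i (R_i)_{P\cap R_i}$. You are simply more explicit than the paper in using a witness $y\in N\setminus P$ to guarantee that nonmaximality of $P\cap R_i$ persists for all $i\ge i_0$, a point the paper leaves to the reader.
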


\begin{proof}
Let $P$ be a nonmaximal prime ideal of $S$ and denote $P_n = P \cap R_n$, so set-theoretically $P = \bigcup P_n$ and $S_P = \bigcup_{n \ge 0} (R_n)_{P_n}$.
Since $P$ is nonmaximal, $P_n \subsetneq \m_n$ for some fixed large $n$.
An inductive argument with Lemma~\ref{isom off closed fiber} yields that for $m \ge n$, $(R_m)_{P_m} = (R_n)_{P_n}$.
It follows that $S_P = (R_n)_{P_n}$ is a regular local ring.
\end{proof}

It follows from  Theorem~\ref{hull}(1) that the positive integer  $i$ in Proposition~\ref{localizations of S} can be chosen independently of $P$.

In light of Proposition~\ref{localizations of S},  the ideals of the Shannon extension $S$ that are primary for  the maximal ideal play an important role in our treatment of the structure of $S$.  We characterize in the next lemma and proposition when the maximal ideal of $S$ is principal.

\begin{lemma}  \label{principal case}
Assuming Setting~\ref{setting 1}, the following are equivalent for $x \in S$.

\begin{itemize}

\item[{\em (1)}] $N = xS$.

\item[{\em (2)}]  $P:=\bigcap_{i>0}N^i$ is a prime ideal, $S/P$ is a DVR with maximal ideal
the image of  $xS$  and $P = PS_P$.  

\item[{\em (3)}] For every valuation ring $V$ that birationally dominates $S$,  ${\m}_iV = xV$ for all $i \gg 0$. 
 
\item[{\em (4)}]  The element $x$ is a regular parameter in $R_i$ for all  $i \gg 0$. 

\end{itemize}
\end{lemma}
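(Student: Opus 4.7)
The plan is to establish the cycle (1)$\Rightarrow$(3)$\Rightarrow$(4)$\Rightarrow$(1), and separately prove (1)$\Leftrightarrow$(2). The implication (1)$\Rightarrow$(3) is immediate: if $V$ birationally dominates $S$, then $NV = xSV = xV$, and choosing $i$ large enough that $x \in \m_i$ gives $xV \subseteq \m_iV \subseteq NV = xV$. For (4)$\Rightarrow$(1), given $y \in N$, pick $j \gg 0$ so that $y \in \m_j$ and (using (4)) $x$ is a regular parameter of both $R_j$ and $R_{j+1}$. Writing $\m_jR_{j+1} = zR_{j+1}$, we have $x = zu$ for some $u \in R_{j+1}$; if $u \in \m_{j+1}$, then $x = zu \in \m_{j+1}^2$ (since $z \in \m_{j+1}$), contradicting that $x$ is a regular parameter of $R_{j+1}$. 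So $u$ is a unit, $\m_jR_{j+1} = xR_{j+1}$, and $y/x \in R_{j+1} \subseteq S$; hence $N \subseteq xS$, and the reverse inclusion is trivial.

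For (3)$\Rightarrow$(4), fix any valuation ring $V$ birationally dominating $S$ (it exists by a Zorn's lemma argument applied to local subrings of $F$ dominating $S$), and choose $n_0$ past which $\m_iV = xV$ and $x \in R_i$. For $i \geq n_0$, $R_{i+1}$ is the unique local quadratic transform of $R_i$ dominated by $V$; since $\m_iV = xV$, this transform may be presented as $R_i[\m_i/x]_{\mathfrak q}$, where $\mathfrak q$ is the center of $V$. In this presentation, $\m_iR_{i+1} = xR_{i+1}$, and $R_{i+1}/xR_{i+1}$ is a localization at a prime of the polynomial ring $(R_i/\m_i)[t_1,\ldots,t_{d-1}]$, hence a regular local ring. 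Therefore $x$ is a regular parameter of $R_{i+1}$.

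For (1)$\Rightarrow$(2), since $xS = N$ is maximal and hence prime, the function $v(s) := \sup\{n \ge 0 : s \in x^nS\}$ is multiplicative on $S\setminus\{0\}$; its kernel on $S$ is $P = \bigcap_n N^n$, and a short check shows $P$ is prime. Each $s \in S\setminus P$ factors uniquely as $s = x^{v(s)}u$ with $u \in S\setminus N$, so $v$ descends to $S/P$ and extends to a $\mathbb{Z}$-valued valuation on $\operatorname{Frac}(S/P)$ with valuation ring $S/P$; in particular $S/P$ is a DVR whose maximal ideal is the image of $xS$. The same factorization gives $p/s = u^{-1}(p/x^{v(s)})$ with $p/x^{v(s)} \in \bigcap_k x^kS = P$, yielding $P = PS_P$. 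Conversely, under (2), $N = xS + P$ and $x \notin P$ (else $S/P$ would be a field); so $P = PS_P$ gives $p/x \in P \subseteq S$ for each $p \in P$, whence $P \subseteq xS$ and $N = xS$.

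The main obstacle is (3)$\Rightarrow$(4), where one must convert the single-valuation condition $\m_iV = xV$ into the structural statement that $R_{i+1}$ is a quadratic transform of $R_i$ in the $x$-direction. This relies on the uniqueness of the local quadratic transform of $R_i$ dominated by $V$, which permits the presentation $R_i[\m_i/x]_{\mathfrak q}$; once this is in hand, the regularity of $x$ in $R_{i+1}$ reduces to the standard computation of the exceptional fiber of the blowup. The remaining implications amount to valuative bookkeeping together with the observation that $xS$ being prime forces $S/P$ to carry a canonical $\mathbb{Z}$-valued valuation.
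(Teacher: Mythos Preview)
Your proof is correct and follows the same overall structure as the paper's: the cycle $(1)\Rightarrow(3)\Rightarrow(4)\Rightarrow(1)$ together with $(1)\Leftrightarrow(2)$ handled separately. The implications $(1)\Rightarrow(3)$ and $(4)\Rightarrow(1)$ match the paper's arguments almost verbatim, and your treatment of $(1)\Leftrightarrow(2)$ spells out what the paper dismisses as a standard exercise (citing Kaplansky).

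The one notable divergence is $(3)\Rightarrow(4)$, which you single out as ``the main obstacle.'' Your argument---presenting $R_{i+1}$ explicitly as $R_i[\m_i/x]_{\q}$ and invoking the regularity of the exceptional fiber---is correct, but the paper's proof is considerably shorter: for $j$ large enough that $x\in\m_j$ and $\m_jV=xV$, one simply notes that $\m_j^2V = x^2V \subsetneq xV = \m_jV$ (since $x$ lies in the maximal ideal of $V$), so $x\notin\m_j^2$ and hence $x$ is a regular parameter of $R_j$. This bypasses the structural description of the quadratic transform entirely. Your approach has the minor advantage of also yielding $\m_iR_{i+1}=xR_{i+1}$ directly (which the paper re-derives in $(4)\Rightarrow(1)$), but the paper's one-line valuation argument is the more efficient route here.
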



\begin{proof}  The equivalence of (1) and (2) is a standard argument involving only the fact that $S$ is a  local domain; see \cite[Exercise 1.5, p.~7]{Kap}

(1) $\Rightarrow$ (3)
Let $V$ be a valuation ring that birationally dominates $S$.  
  If $i $ is such that $x \in {\m}_i$, then $xV \subseteq {{\m}}_iV \subseteq NV = xV$, and hence $xV = {{\m}_j}V$ for all $j \geq i$. 

(3) $\Rightarrow$ (4)  Let $i$ be such that both $x \in {\m}_i$ and  ${\m}_jV = xV$ for all $j \geq i$. Then since ${\m}_j^2V \subsetneq {\m}_jV$, it follows that $x \in {\m}_j \smallsetminus {\m}_j^2$. Hence $x$ is a regular parameter in $R_j$. 

(4) $\Rightarrow$ (1)  Let $i$ be such that $x$ is a regular parameter for all $j \geq i$. 
Let $j \geq i$.  Then since $x$ is a regular parameter in $R_{j+1}$ and $xR_{j+1}$ is contained in the height 1 prime ideal ${\m}_jR_{j+1}$ of $R_{j+1}$, it follows that $xR_{j+1} = {\m}R_{j+1}$.  Since this holds for all $j \geq i$, we conclude that $N = \bigcup_{j \geq i} {\m}_jR_{j+1} = \bigcup_{j \geq i} xR_{j+1} = xS$. 
%
%
\end{proof}

Following \cite{HK}, we say there is {\it no change of direction} for the quadratic sequence $R_0 \subseteq R_1 \subseteq \cdots \subseteq R_n$ if ${\m}_0 \not \subseteq {\m}_n^2$; otherwise, if ${\m}_0 \subseteq {\m}_n^2$, there is a {\it change of direction} between $R_0$ and $R_n$.  We say that the quadratic sequence $\{R_i\}$ {\it changes direction infinitely many times} if there exist infinitely many positive integers $i$ such that there is a change in direction between $R_i$ and $R_{n_i}$ for some $n_i > i$.

\begin{proposition} \label{maximal ideal} Assuming Setting~\ref{setting 1}, the following statements are equivalent. 
\begin{itemize}

\item[{\em (1)}]   $N$ is not  a principal  ideal of $S$.

\item[{\em (2)}] $N = N^2$.

\item[{\em (3)}]   $\{R_i\}$ changes directions infinitely many times.

\item[{\em (4)}] For every nonzero  element $x$ of $N$ and every $n>0$, $\ord_i(x) >n$ for $i \gg 0$.  
\end{itemize}
\end{proposition}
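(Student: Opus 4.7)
My plan is to close the cycle $(2) \Rightarrow (1) \Rightarrow (3) \Rightarrow (4) \Rightarrow (2)$, exploiting Lemma~\ref{principal case} to convert between principality of $N$ and the regular-parameter condition on elements of $S$, and using the order valuations $\ord_i$ as the main numerical gadget.

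The implications $(2) \Rightarrow (1)$ and $(4) \Rightarrow (2)$ are brief. If $N = xS$ and $N = N^2$, then $x = x^2 s$ in the domain $S$ forces $1 = xs$, contradicting $x \in N$. For $(4) \Rightarrow (2)$, any nonzero $x \in N$ eventually satisfies $\ord_i(x) \ge 2$, so $x \in \m_i^2 \subseteq N^2$, giving $N \subseteq N^2$.

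For $(1) \Rightarrow (3)$ I argue the contrapositive. Suppose $\{R_i\}$ changes direction only finitely many times, and pick $i_0$ so large that no change of direction occurs between $R_{i_0}$ and $R_n$ for any $n > i_0$. Choose $y \in \m_{i_0}$ with $\m_{i_0} R_{i_0+1} = yR_{i_0+1}$; since each subsequent $\m_j R_{j+1}$ is principal, an induction gives $\m_{i_0} R_n = y R_n$ for every $n > i_0$. Then the absence of a change of direction between $R_{i_0}$ and $R_n$ translates into $y \notin \m_n^2$, i.e.\ $y$ is a regular parameter in $R_n$. Applying Lemma~\ref{principal case}, condition (4) $\Rightarrow$ (1), yields $N = yS$.

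The substantive step is $(3) \Rightarrow (4)$, and this is the place I expect to have to be careful. Fix $0 \neq x \in N$ and choose $j$ large enough that $x \in \m_j$. The key numerical inequality is
\[
 \ord_k(x) \;\geq\; \ord_j(x)\cdot \ord_k(\m_j) \qquad (k \ge j),
\]
which comes from $x \in \m_j^{\ord_j(x)}$ together with $\m_j R_k \subseteq \m_k^{\ord_k(\m_j)}$. In particular $\ord_i(x)$ is monotonically nondecreasing in $i$, and whenever a change of direction occurs between $R_i$ and $R_n$ (so $\m_i \subseteq \m_n^2$, equivalently $\ord_n(\m_i) \ge 2$), the value at least doubles: $\ord_n(x) \ge 2\,\ord_i(x)$. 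Extract from the infinite set of changes of direction a subsequence $j \le i_1 < n_1 \le i_2 < n_2 \le \cdots$ of disjoint witnessing pairs; applying the amplification at each step together with monotonicity between them produces $\ord_{n_k}(x) \ge 2^k \ord_j(x) \ge 2^k$, and hence $\ord_i(x)\to\infty$ by monotonicity for all $i \gg 0$.

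The main obstacle is organizing the amplification argument cleanly: one must extract disjoint intervals from the hypothesis on changes of direction (so the bounds compose multiplicatively rather than being wasted on overlap) and verify that the inequality $\ord_k(x) \ge \ord_j(x)\cdot \ord_k(\m_j)$ really does propagate through a general $k$, not just the immediately adjacent LQT. Once the numerical mechanism is pinned down, the remaining implications are short.
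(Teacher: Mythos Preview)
Your proof is correct and follows essentially the same approach as the paper, which also cycles through the four statements using Lemma~\ref{principal case} for the link between principality of $N$ and the regular-parameter condition, and the order valuations for the step through condition~(4). The only differences are cosmetic: the paper runs the cycle $(1)\Rightarrow(2)\Rightarrow(3)\Rightarrow(4)\Rightarrow(1)$ instead of your $(2)\Rightarrow(1)\Rightarrow(3)\Rightarrow(4)\Rightarrow(2)$, proves your contrapositive $(1)\Rightarrow(3)$ directly as $(2)\Rightarrow(3)$, and handles $(3)\Rightarrow(4)$ more tersely by simply observing that each change of direction forces a strict increase $\ord_i(x)<\ord_j(x)$ (your doubling-on-disjoint-intervals argument makes the same point with more explicit bookkeeping, which is fine but not needed).
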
 

\begin{proof}
(1) $\Rightarrow$ (2)  If $N$ is not a principal ideal of $S$, then by Lemma~\ref{principal case}, for each $x \in N$, there exists $i \geq 0$ such that $x \in R_i$ but $x$ is not a regular parameter in $R_i$. 
Hence $x \in {\m}_i^2 \subseteq N^2$, which shows that $N = N^2$. 

(2) $\Rightarrow$ (3) 
For each $i$, let $x_i$ be a regular parameter for $R_i$ such that $x_i R_{i+1} = {\m}_i R_{i+1}$.  
If the maximal ideal $N$ of $S$ is idempotent, then Lemma~\ref{principal case} implies that for each $i$, there exists $n_i > i$ such that $x_i$ is not a regular parameter in $R_{n_i}$.  Thus ${\m}_iR_{i+1} = x_iR_{i+1}  \subseteq  {\m}_{n_i}^2$, and hence there is a change of direction between $R_i$ and $R_{n_i}$. Since this holds for each choice of $i$, we conclude that $\{R_i\}$ changes directions infinitely many times.

(3) $\Rightarrow$ (4)  Let $x \in N$. Since $\{R_i\}$ 
 changes directions infinitely many times, it follows that for each $i$ there exists $j>i$ such that ${\m}_i \subseteq {\m}_j^2$ and hence $\ord_i(x) < \ord_j(x)$.  Hence for each $n >0$ there exists $i$ such that $\ord_j(x) >n$ for all $j \gg i$.  
 
 (4) $\Rightarrow$ (1)  Suppose that $N = xS$ for some $x \in N$. Then by  Lemma~\ref{principal case}, $x$ is a regular parameter in $R_j$ for $j \gg 0$.    This implies that $\ord_j(x) = 1$ for $j \gg 0$, so that $\{i\mid\ord_j(x) > 1\} $ is a finite set.
\end{proof}

\begin{corollary} \label{d-1} Assume Setting~\ref{setting 1}. If $N$  is not principal, then every valuation ring between $S$ and its field of fractions has rank at most $\dim R-1$, and hence $\dim S < \dim R$.    
\end{corollary}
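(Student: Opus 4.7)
The plan is to separate cases according to whether $V$ dominates $S$; the main obstacle will be ruling out an Abhyankar valuation overring of maximal rank $d$ dominating $S$, which requires the full strength of $N=N^2$. First, if $V$ does not dominate $S$, then $P:=M_V\cap S$ is a non-maximal prime of $S$ and $V$ dominates $S_P$. By Proposition~\ref{localizations of S}, $S_P=(R_i)_{P\cap R_i}$ for $i\gg 0$, and for such $i$ the prime $P\cap R_i$ is properly contained in $\m_i$ (since one can find $x\in N\setminus P$ eventually lying in $\m_i$), so this localization is a regular local ring of dimension at most $\dim R_i-1\le d-1$. Abhyankar's inequality applied to $V$ dominating this Noetherian local ring then gives $\mathrm{rank}\,V\le d-1$.

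Suppose instead that $V$ dominates $S$. Then $V$ dominates each $R_i$, and by the uniqueness of the local quadratic transform of $R_i$ dominated by $V$, the sequence $\{R_i\}$ coincides with the quadratic sequence of $R$ along $V$. Since this sequence is infinite, Abhyankar's theorem \cite[Proposition~4]{Abh} yields $\mathrm{trdeg}(V/M_V:R/\m)\neq d-1$. Combining this with the Abhyankar inequality $\mathrm{rank}\,V+\mathrm{trdeg}(V/M_V:R/\m)\le d$ settles the subcase $\mathrm{trdeg}\ge 1$, yielding $\mathrm{rank}\,V\le d-1$.

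The hard subcase is $\mathrm{trdeg}=0$, where $V$ is an Abhyankar valuation of $R$ and a priori $\mathrm{rank}\,V$ could equal $d$. Suppose for contradiction that $\mathrm{rank}\,V=d$: the value group $\Gamma$ is then free abelian of rank $d$ with lexicographic order, and its smallest positive element is $\gamma_0:=(0,\ldots,0,1)$. Since $N=N^k$ for every $k\ge 1$, each nonzero $y\in N$ is a sum of products of $k$ elements of $N$, so $v(y)\ge k\gamma_0$ for all $k$; a lex comparison forces the first nonzero coordinate of $v(y)$ to lie at some position strictly less than $d$. Choose $x_i\in\m_i$ with $x_iR_{i+1}=\m_iR_{i+1}$, so that $v(x_i)=\min v(\m_i)$ and the sequence $\{v(x_i)\}$ is weakly decreasing in $\Gamma_+$. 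Let $k_i\in\{1,\ldots,d-1\}$ denote the position of the first nonzero coordinate of $v(x_i)$; a lex-order comparison shows $k_i$ is weakly increasing (any decrease would make $v(x_{i+1})$ exceed $v(x_i)$ at an earlier coordinate), so $k_i$ stabilizes at some $k^*$, and the positive integer $v(x_i)_{k^*}$ then weakly decreases and stabilizes at some $a^*\ge 1$. For any nonzero $y\in N$, the containment $y\in\m_i^{\mathrm{ord}_i(y)}$ gives $v(y)\ge\mathrm{ord}_i(y)\cdot v(x_i)$; combined with $\mathrm{ord}_i(y)\to\infty$ from Proposition~\ref{maximal ideal}(4) and the fact that this product has coordinate $\mathrm{ord}_i(y)\cdot a^*\to\infty$ at position $k^*$, a lex comparison forces the first nonzero coordinate of $v(y)$ to lie at position $<k^*$. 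Applied to $y=x_j$ for $j$ large this yields $k_j<k^*$, contradicting $k_j=k^*$.

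Finally, if $\dim S\ge d$ then $S$ admits a chain of primes $0\subsetneq P_1\subsetneq\cdots\subsetneq P_d=N$ of length $d$, and a standard composition-of-valuations construction (iteratively refining by a valuation centered at each successive quotient) produces a valuation overring $V$ of $S$ with $\mathrm{rank}\,V\ge d$, contradicting the bound just established. Hence $\dim S\le d-1<\dim R$.
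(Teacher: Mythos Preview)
Your argument is correct, but it is considerably more elaborate than the paper's proof, which handles all valuation overrings $U$ of $S$ uniformly in four lines. The paper observes that $N=N^2$ (Proposition~\ref{maximal ideal}) forces $NU=(NU)^2$, and in a valuation ring every idempotent ideal is prime; since a rank-$d$ valuation overring of a $d$-dimensional Noetherian local ring is discrete by Abhyankar \cite[Theorem~1]{Abh} and hence has no nonzero idempotent prime ideals, one gets $\mathrm{rank}\,U\le d-1$ immediately. Your ``hard subcase'' (trdeg $0$, rank $d$) is really a hands-on reproof of this same phenomenon: you invoke discreteness to identify $\Gamma\cong\mathbb Z^d$ with the lexicographic order, and then use $N=N^k$ together with $\ord_i\to\infty$ to force the leading nonzero coordinate of every $v(y)$, $y\in N$, to migrate leftward indefinitely, which is exactly the contradiction with discreteness that the idempotent-prime argument captures in one stroke. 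Your non-dominating case via Proposition~\ref{localizations of S} is fine but unnecessary once the uniform argument is in hand, and the detour through \cite[Proposition~4]{Abh} to get $\mathrm{trdeg}\ne d-1$ is never actually used (when $\mathrm{trdeg}\ge 1$ you only need $\mathrm{rank}+\mathrm{trdeg}\le d$). The final step, bounding $\dim S$ by the maximal rank of a valuation overring, is the same in both proofs; the paper cites \cite[(11.9)]{Nag}.
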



\begin{proof}
Let $U$ be a valuation ring { between} $S$ and its field of fractions $F$. By Proposition~\ref{maximal ideal},  $NU$ is an idempotent ideal of $U$. Since $U$ is a valuation ring, this implies $NU$ is a prime ideal of $U$  \cite[Theorem~17.1, p.~187]{Gil}.  A rank $d$ valuation ring between a $d$-dimensional Noetherian ring and its field of fractions is discrete \cite[Theorem 1]{Abh},  and hence has no nonzero idempotent prime ideals. Thus the rank of $U$ is at most $\dim R  -1$.  Since the rank of every valuation ring between $S$ and $F$ is at most $\dim R -1$, it follows that $\dim S < \dim R$ \cite[(11.9), p.~37]{Nag}.  
\end{proof}

\begin{remark} {\em 
 In contrast to Corollary~\ref{d-1}, if the maximal ideal of $S$ is principal it need not be true that $\dim S < \dim R$. 
 If $\dim R = 2$, then every rank $2$ valuation ring that birationally dominates $R$ is a Shannon extension $S$ with $\dim S = \dim R = 2$; see Corollary~\ref{Abhyankar cor}. There also exist examples with $\dim S = \dim R$  in which $S$ is not a valuation ring: the Shannon extension $S$ in 
 Example~\ref{Shannon example 4.7} 
is not a valuation ring and $\dim S= \dim R = 3$.     }
\end{remark}

\begin{proposition} \label{flat lemma} Assuming Setting~\ref{setting 1},   
 there exists a regular parameter $x$ in one of the $R_i$'s such that $xR_{i+1} = {\ff m_i}R_{i+1}$ and 
 $xS$ is an $N$-primary ideal of $S$.  
\end{proposition}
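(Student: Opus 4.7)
For each $i \ge 0$, I will choose $x_i \in \m_i$ to be any generator (coming from $R_i$) of the principal ideal $\m_iR_{i+1}$. Such an $x_i$ is automatically a regular parameter of $R_i$: if $x_i \in \m_i^2$, then $x_i \in \m_i^2 R_{i+1} = x_i^2 R_{i+1}$, forcing $x_i$ to be a unit in $R_{i+1}$ --- impossible as $x_i \in \m_{i+1}$. Two such choices differ by a unit of $R_{i+1}$, so the ideal $x_i S = \m_i S$ depends only on $i$. Since $x_i \in \m_i \subseteq \m_{i+1}$, one has $x_i \in \m_{i+1}R_{i+2} = x_{i+1}R_{i+2} \subseteq x_{i+1}S$, so $x_i S \subseteq x_{i+1}S$.

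The goal is to find $i$ with $x_i S$ an $N$-primary ideal. Introduce the set $\mathcal{A}_i := \{\mathfrak{q} \in \Spec S : \hgt \mathfrak{q} = 1,\ \mathfrak{q} \ne N,\ x_i \in \mathfrak{q}\}$. The inclusion $x_i S \subseteq x_{i+1}S$ at once yields $\mathcal{A}_{i+1} \subseteq \mathcal{A}_i$. If $\mathcal{A}_i = \emptyset$ for some $i$, then $x_i S$ is $N$-primary: were a nonmaximal prime $P$ of $S$ to contain $x_i$, then $S_P$ would be Noetherian regular local by Proposition~\ref{localizations of S}, and Krull's principal ideal theorem applied to $x_i S_P$ would produce a height-$1$ minimal prime of $S_P$, which corresponds --- heights being preserved upon localization to primes contained in $P$ --- to a nonmaximal height-$1$ prime of $S$ in $\mathcal{A}_i$, contradicting $\mathcal{A}_i = \emptyset$.

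It therefore suffices to show that the descending chain $\{\mathcal{A}_i\}$ is eventually empty, which will follow from (a) finiteness of $\mathcal{A}_0$, and (b) $\bigcap_i \mathcal{A}_i = \emptyset$. \emph{Finiteness (the main obstacle):} each $\mathfrak{q} \in \mathcal{A}_0$ yields a DVR $S_\mathfrak{q} \in \epd(S/R)$ containing $x_0$ in its maximal ideal. By Remark~\ref{the epds}, $S_\mathfrak{q}$ is either an order valuation ring $V_l \supseteq S$ (at most $\dim R - 1$ of these by Proposition~\ref{finitely many order valuations}) or an essential prime divisor $R_P$ of $R$ with $R_P \supseteq S$; in the latter case $P \ni x_0$ and $\hgt P = 1$, but $x_0$ is a regular parameter of $R$, so the unique height-$1$ prime of $R$ containing $x_0$ is $x_0 R$, forcing $P = x_0 R$. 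Thus $|\mathcal{A}_0| \le \dim R$. \emph{Emptiness of the intersection:} fix $\mathfrak{q} \in \mathcal{A}_0$; since $\mathfrak{q}$ is nonmaximal, $N \not\subseteq \mathfrak{q}$, so some element of $N$ is a unit of the DVR $S_\mathfrak{q}$, forcing $N S_\mathfrak{q} = S_\mathfrak{q}$. Rewriting $\bigcup_i x_i S_\mathfrak{q} = \bigcup_i \m_i S_\mathfrak{q} = N S_\mathfrak{q} = S_\mathfrak{q}$ exhibits an ascending chain of ideals in the DVR $S_\mathfrak{q}$ which must stabilize at the whole ring, so $x_i \notin \mathfrak{q}$ for all $i \gg 0$. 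A descending chain of finite sets with empty intersection is eventually empty, yielding the required $i$.
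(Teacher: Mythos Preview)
Your argument is correct. Both proofs reduce to showing that for some $i$ no nonmaximal height-$1$ prime of $S$ contains $x_i$, and both draw on Proposition~\ref{finitely many order valuations} and Proposition~\ref{localizations of S}, but the mechanisms differ. The paper chooses $i$ directly: it picks $i$ large enough that $\epd(S/R) \subseteq \epd(R_i)$ (possible once no later order valuation ring contains $S$), and then observes that a nonmaximal height-$1$ prime $\p \ni x_i$ would satisfy $\p \cap R_i = \m_i$ (since $\m_i \subseteq x_i S$) while at the same time $S_\p \in \epd(R_i)$ forces $\p \cap R_i$ to have height~$1$ in $R_i$ --- an immediate contradiction to $\dim R_i \ge 2$. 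Your route is instead a descending-chain argument on the finite set $\mathcal{A}_i$ of bad primes: finiteness comes from the same classification of $\epd(S/R)$, and emptiness of $\bigcap_i \mathcal{A}_i$ comes from the ascending chain $\bigcup_i x_i S_\q = NS_\q = S_\q$ stabilizing in the DVR $S_\q$. The paper's proof is shorter and pinpoints which $i$ works; yours is more dynamic and avoids needing to know in advance that all of $\epd(S/R)$ is visible in a single $\epd(R_i)$, requiring only that it is finite.
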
 

\begin{proof}
By Proposition~\ref{new order valuations}, there exists $i \geq 0$ such that no order valuation ring $V_j$, $j \geq i$,  
  is in $\epd(S/R_i)$, so $\epd(S/R) \subseteq \epd(R_i)$.
Let $x \in \m_i$ be such that $x R_{i+1} = \m_i R_{i+1}$.
Note that $\m_i \subseteq x S$ and $x S \cap R_i = \m_i$.
Assume by way of contradiction that a non-maximal prime ideal $\q$ of $S$ contains $x$.
Then $S_{\q}$ is Noetherian by Proposition~\ref{localizations of S}, so there exists a height $1$ prime ideal $\p$ of $S$ such that $x \in \p$.
Then
$$\m_i = x S \cap R_i \subseteq \p \cap R_i \subseteq N \cap R_i = \m_i,$$
so $\p \cap R_i = \m_i$.
However, $S_{\p} \in \epd(S/R) \subseteq \epd(R_i)$.
Thus $S_{\p} = (R_i)_{\p \cap R_i} = R_i$, contradicting the assumption that $\dim R_i \ge 2$.
We conclude that $\sqrt{x S} = N$.

 \end{proof}

\begin{corollary}  \label{DVR case} Assuming Setting~\ref{setting 1},
the following are equivalent for the Shannon extension $S$ of $R$. 

\begin{itemize}
\item[{\em (1)}] 
 $S$ is dominated by a DVR.

\item[{\em (2)}] $S$ is a DVR.

\item[{\em (3)}] 
$S$ is a Noetherian ring.

 \end{itemize}
 \end{corollary}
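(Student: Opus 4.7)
The plan is to close the loop $(1) \Rightarrow (2) \Rightarrow (3) \Rightarrow (2)$ together with the trivial $(2) \Rightarrow (1)$. Since a DVR is Noetherian and dominates itself, the implications $(2) \Rightarrow (3)$ and $(2) \Rightarrow (1)$ are immediate, so the real content is $(3) \Rightarrow (2)$ and $(1) \Rightarrow (2)$. In both cases the strategy is to first force the maximal ideal $N$ to be principal via Proposition~\ref{maximal ideal}, and then collapse $\bigcap_{i>0} N^i$ to zero so that Lemma~\ref{principal case}(2) yields that $S$ itself is a DVR.

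For $(3) \Rightarrow (2)$, I would argue that if $S$ is Noetherian then $N \ne N^2$ by Nakayama (since $N \ne 0$ as $S$ dominates $R$ with $\dim R \ge 2$), so Proposition~\ref{maximal ideal} forces $N$ to be principal, say $N = xS$. The Krull intersection theorem then gives $\bigcap_{i>0} N^i = 0$, and Lemma~\ref{principal case}(2) identifies $S = S/0$ with a DVR whose uniformizer is $x$.

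For $(1) \Rightarrow (2)$, let $U$ be a DVR with maximal ideal $\mathfrak m_U = tU$ and valuation $v$ that birationally dominates $S$. To rule out $N = N^2$, I would observe that $NU$ is a nonzero ideal of $U$ (since $N$ contains nonzero elements and $U$ dominates $S$), hence of the form $\mathfrak m_U^n$ for some $n \ge 1$; but $N = N^2$ would give $NU = (NU)^2 = \mathfrak m_U^{2n}$, forcing $n = 2n$, a contradiction. So by Proposition~\ref{maximal ideal}, $N$ is principal, $N = xS$, with $v(x) \ge 1$. For any $y \in \bigcap_{i>0} N^i$, we then have $v(y) \ge i \cdot v(x) \ge i$ for every $i$, whence $y = 0$. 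Again Lemma~\ref{principal case}(2) delivers that $S$ is a DVR.

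I do not anticipate a genuine obstacle here: the machinery in Proposition~\ref{maximal ideal} and Lemma~\ref{principal case} has already done all the structural work, so the argument reduces to two short verifications that $N$ cannot be idempotent. The only point requiring mild care is making sure, in the DVR-dominance step, that $NU$ is genuinely nonzero before invoking the structure of ideals in $U$, which follows because any nonzero $x \in N \subseteq \mathfrak m_U$ has $v(x) \geq 1 > 0$.
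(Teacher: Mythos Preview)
Your argument is correct. Both implications $(3)\Rightarrow(2)$ and $(1)\Rightarrow(2)$ go through as written: Nakayama (respectively, the discreteness of the dominating valuation) kills the possibility $N=N^2$, Proposition~\ref{maximal ideal} then makes $N$ principal, and the vanishing of $\bigcap_i N^i$ (via Krull intersection, respectively via $v(y)\ge i\cdot v(x)$) feeds into Lemma~\ref{principal case}(2) to give $S=S/0$ a DVR. One cosmetic point: statement~(1) only says $S$ is dominated by a DVR, not birationally dominated, but your argument never actually uses birationality, so no reduction is needed.

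The paper argues $(1)\Rightarrow(2)$ quite differently. Rather than invoking Proposition~\ref{maximal ideal} and Lemma~\ref{principal case}, it shows directly that $S$ \emph{equals} the dominating DVR $V$: given $f=a/b\in V$ with $a,b\in R$ coprime, one passes to the next local quadratic transform and strictly decreases $v$ of the denominator, so after finitely many steps $f$ lands in some $R_i$. This is a classical Zariski--Abhyankar style computation that is independent of the structural lemmas and yields the extra information $S=V$. Your route is shorter once those lemmas are in hand, and it parallels your $(3)\Rightarrow(2)$ nicely; the paper's route is more self-contained and more explicit about what is actually happening in the quadratic sequence. For $(3)\Rightarrow(2)$ the paper instead uses Proposition~\ref{flat lemma} (so $N=\sqrt{xS}$) together with Krull's principal ideal theorem to get $\dim S=1$ before concluding; your use of the Krull intersection theorem plus Lemma~\ref{principal case}(2) is an equally valid shortcut that sidesteps Proposition~\ref{flat lemma}.
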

 
 \begin{proof} 
(1) $\Rightarrow$ (2)\footnote{The argument given here is related to classical results of Abhyankar in \cite{Abh} and
Zariski in  \cite[pp.~27-28]{Zar54}.}    Suppose $V$ is a DVR that dominates $S$. Replacing $V$ by $V \cap F$, 
 we may assume that $V$ birationally dominates $S$. 
We claim that $S = V$.  
Let $ f \in  V$.  Since $R$ is a UFD, we can write $f = a/b$, where $a,b \in R$   are  relatively prime. 
 Let $v$ denote the valuation associated to $V$ with value group
the integers.  We have $v(f) = v(a) - v(b) \ge 0$,  and  $v(b) = 0$ if and only if  $f \in R$.   Assume that
$v(b) = n > 0$. Let $x \in {\m} $ be
such that ${\m}V = xV$.  Then $x$ is part of a regular system of parameters for $R$ and
 $xR_1 = {\m}R_1$.   Hence there exist $c,d \in R_1$ such that $a = xc$ and $b = xd$. 
 If follows that $f = c/d$ and $v(d) < n$.
Writing $c/d$ in lowest terms in $R_1$ will not increase the $v$-value of the denominator.  Hence
repeating this process at most $n$ times gives $f \in R_i$, with $i \le n$. Thus $S = V$.   

(2) $\Rightarrow$ (3) This is clear.

(3) $\Rightarrow$ (1)  By Proposition~\ref{flat lemma}, $N$ is the radical of a principal ideal of $S$, and hence since $S$ is Noetherian, $\dim S = 1$. Moreover, since $N$ is finitely generated, $N$ is not idempotent, and hence by Proposition~\ref{maximal ideal}, $N$ is a principal ideal. Thus $S$ is a DVR.
\end{proof}

\begin{remark} {\em Another condition that characterizes when a Shannon extension is a DVR is given in Corollary~\ref{another DVR}.} 
\end{remark}

\section{The Noetherian hull  of a Shannon extension}

In this section we continue to assume Setting~\ref{setting 1}, and we show that there is a smallest Noetherian overring $T$ that properly contains the Shannon extension $S$ and this ring is a regular ring that is a localization of $R_i$ for sufficiently large $i$. In the next section $T$ is used to decompose $S$ as an intersection of a regular ring and a valuation ring.

\begin{theorem} \label{hull} Assuming Setting~\ref{setting 1}, let $T$ be the intersection of all the DVRs with quotient field $F$  that properly contain $S$, 
 where an empty intersection equals $F$.
  Then the following statements hold for $T$.
\begin{itemize}

\item[{\em (1)}]  $T = S[1/x]$ for any $x \in N$ such that $xS$ is $N$-primary.
Furthermore, $T$ is a localization of $R_i$ for $i\gg 0$. In particular, $T$ is a UFD.

\item[{\em (2)}] The ring $T$ is the intersection of all the $R_P$, $P$ a height 1 prime ideal of $R$, that contain $S$, along with the  at most $\dim R - 1$  order valuation rings $V_i$ that contain $S$.  

\item[{\em (3)}] 
The ring $T$ is a Noetherian regular ring that is the unique minimal   proper  Noetherian overring of $S$ 
in $F$.\footnote{If $\dim S = 1$,   then  $T$ is the quotient field of $S$ by Proposition~\ref{localizations of S} and statement (2). 
 We regard a field to 
be a zero-dimensional regular local ring.}

 \end{itemize} 
\end{theorem}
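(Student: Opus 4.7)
The plan is to fix, via Proposition~\ref{flat lemma}, an element $x \in N$ and an index $i_0$ such that $x$ is a regular parameter in $R_{i_0}$ with $xR_{i_0+1} = \m_{i_0}R_{i_0+1}$ and $xS$ is $N$-primary. Since $xS$ is $N$-primary, $N$ is the only prime of $S$ containing $x$, so $\Spec S[1/x]$ corresponds to the non-maximal primes of $S$. I will split the argument according to the three parts of the theorem.

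For the equality $T = S[1/x]$ in part~(1), the inclusion $S[1/x] \subseteq T$ is the easier direction. Given any DVR $W$ with $S \subsetneq W$, Corollary~\ref{DVR case} forbids $W$ from dominating $S$ (else $S$ is a DVR, and then $S = W$ since a DVR has no proper nontrivial overrings other than its quotient field). So some $y \in N$ is a unit in $W$. From $N = \sqrt{xS}$ one finds $y^k = xs$ with $s \in S \subseteq W$; since $y^k$ is a unit in $W$, neither factor can lie in $\m_W$, forcing $s$ and hence $x = y^k/s$ to be units in $W$. For the reverse inclusion, given $f \in F \setminus S[1/x]$, the ideal $\{s \in S : sf \in S[1/x]\}$ is proper and disjoint from $\{x^k\}$, so it is contained in some prime $P \subsetneq N$ of $S$ with $f \notin S_P$. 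By Proposition~\ref{localizations of S}, $S_P$ is a regular local ring, hence a UFD; writing $f = a/b$ in lowest terms and localizing at any height 1 prime $Q$ of $S_P$ containing $b$ produces a DVR $W = (S_P)_Q$ with $S \subseteq W$ and $f \notin W$, whence $f \notin T$.

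For the localization claim in part~(1), I will show that for $i \geq i_0$ the ring $T$ is a localization of $R_i$. Because $x \in \m_j$ for all $j \geq i_0$ and $x \notin P$ for any non-maximal prime $P$ of $S$, iterating Lemma~\ref{isom off closed fiber} yields $S_P = (R_i)_{P \cap R_i}$ \emph{uniformly} in $P$ (the index $i$ does not depend on $P$, strengthening Proposition~\ref{localizations of S}). Setting $\mathcal P = \{P \cap R_i : P \in \Spec S, P \neq N\}$ and $W = R_i \setminus \bigcup_{Q \in \mathcal P}Q$, one compares prime spectra and height 1 local rings to conclude $T = W^{-1}R_i$; in particular $T$ is a UFD since $R_i$ is. Part~(2) then follows from the Krull representation of the Noetherian normal ring $T$: each height 1 localization $T_Q$ is a DVR containing $S$ properly, so belongs to $\epd(S/R)$, and applying Lemma~\ref{isom off closed fiber} repeatedly back to $R_0 = R$ identifies each such DVR as either an order valuation $V_j$ (of which there are at most $\dim R - 1$ by Proposition~\ref{new order valuations}) or $R_P$ for some height 1 prime $P$ of $R$ containing $S$.

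For part~(3), $T$ is Noetherian and regular because it is a localization of the regular local ring $R_i$. To prove minimality, let $B$ be any Noetherian overring of $S$ with $S \subsetneq B$; the claim $T \subseteq B$ reduces to showing $x$ is a unit in $B$. If not, $x$ lies in some maximal ideal $M$ of $B$, and since $N$ is the unique prime of $S$ containing $x$, $M \cap S = N$, so $B_M$ is a Noetherian local ring birationally dominating $S$. The main obstacle will be ruling out this configuration: the argument needs to exploit the structure of $S$ as the directed union of the $R_j$ together with the unbounded residue field growth along $\{R_j\}$, likely invoking Abhyankar's inequality applied to each $R_j \hookrightarrow B_M$ (for $j \gg 0$ with $\dim R_j$ stabilized at $d$) to derive a contradiction from the finiteness of $\dim B_M$. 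This uniqueness step is the delicate part of the theorem; the rest of the proof consists of bookkeeping with transforms, the intersection description of Krull domains, and the structure already developed in Propositions~\ref{localizations of S} and~\ref{flat lemma}.
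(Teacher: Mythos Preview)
Your argument for the equality $T = S[1/x]$ in part~(1) is correct and in fact slightly more direct than the paper's: the paper first shows $S[1/x]$ is flat over $R$ via Richman's criterion, then invokes the Heinzer--Roitman theorem that a flat overring of a UFD is a localization, and only afterward identifies $S[1/x]$ with $T$; you instead exhibit for each $f\notin S[1/x]$ an explicit DVR $(S_P)_Q$ separating $f$ from $T$. Both approaches work. Your localization step, however, is too vague as written: knowing that $S_P = (R_i)_{P\cap R_i}$ for every nonmaximal $P$ does not by itself yield $T = W^{-1}R_i$ via ``comparing prime spectra and height~1 local rings,'' because you have not yet established that $T$ is a Krull domain. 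What this observation actually gives you is precisely Richman's flatness criterion for the inclusion $R_i \hookrightarrow S[1/x]$, and then the Heinzer--Roitman result finishes the job; you should cite these explicitly rather than leave the step informal.

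The genuine gap is in part~(3). Your proposed contradiction relies on ``unbounded residue field growth along $\{R_j\}$,'' but this is false: by Remark~\ref{dim remark}, the transcendence degree of the residue field extension at each step equals $\dim R_j - \dim R_{j+1}$, and since the dimensions stabilize at some $d\geq 2$, the residue fields are eventually constant. Abhyankar's inequality applied to $R_j \hookrightarrow B_M$ then only gives $\dim B_M \leq d$, which is no contradiction. The paper's argument is much simpler and you overlooked it: any Noetherian local domain $B_M$ is birationally dominated by a DVR (for instance a Rees valuation of its maximal ideal), and this DVR then dominates $S$, so Corollary~\ref{DVR case} forces $S$ to be a DVR, contradicting the standing assumption. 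Thus for every maximal ideal $M$ of $B$ one has $M\cap S \ne N$, whence $x\notin M$, and $T = S[1/x] \subseteq B_M$ for all $M$ gives $T\subseteq B$.
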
 

\begin{proof}
If $S$ is a DVR, then   $T$ is the quotient field of $S$ and the assertions  (1), (2) and (3) hold, 
so we assume that $S$ is not a DVR.  

(1) Let $x \in S$ such that $xS$ is $N$-primary. 
By Proposition~\ref{new order valuations}, there exists $i \geq 0$ such that no order valuation ring $V_j$, $j \geq i$,  
  is in $\epd(S/R_i)$.  Thus no order valuation ring of the sequence $\{R_j\}_{j \geq i}$ is in $\epd(S/R_i)$, and hence no order valuation ring of this sequence contains $S$. 
   Therefore,   
   by replacing $R$ with $R_i$ we may assume that $\epd(S/R)$ contains none of the $V_i$. 

By Proposition~\ref{flat lemma} there exists a regular parameter $x_i$ 
in one of the $R_i$'s such that $x_iR_{i+1} = \m_iR_{i+1}$ and $x_iS$ is $N$-primary. We may assume without loss of generality that 
 $i = 0$ and that $x = x_i$; in particular,  $N = \sqrt{xS}$ and  $xR_1 = {\m}R_1$. We claim that $S[1/x]$ is a flat extension of $R$. To prove this, it is enough to show that for each prime ideal $P$ of $S$ that survives in $S[1/x]$, $S_P = R_{P \cap R}$ \cite[Theorem 2]{Ric}. Let $P$ be such a prime ideal. Then since $x \not \in P$ and $x \in {\m}_i$ for all $i \geq 0$, it must be that for each $i$,  $P\cap R_i$ is a nonmaximal ideal prime ideal of $R_i$.  Therefore, by Lemma~\ref{isom off closed fiber}, for each $i \geq 0$, we have $R_{P \cap R} = (R_{i})_{P \cap R_i}$, and hence $R_{P \cap R} = \bigcup_{i\geq 0}(R_{i})_{P \cap R_i} =  S_P$, which proves the claim. 

Next, since  $S[1/x]$ is a flat extension of the UFD $R$, then $S[1/x]$ is a localization of $R$ at a multiplicatively closed set \cite[Theorem 2.5]{HR}.  
To complete the proof of (1), we claim that $T = S[1/x]$. Since $S[1/x]$, as a localization of $R$, is an integrally closed Noetherian domain, it is an intersection of DVRs and hence $T \subseteq S[1/x]$.  It remains to show that every DVR that contains $S$ contains $S[1/x]$. Let $V$ be a DVR that contains $S$. If $V$ dominates $S$, then by Corollary~\ref{DVR case}, $S$ is a DVR, contrary to assumption. Thus $V$ does not dominate $S$ and since $xS$ is $N$-primary, it follows that $S[1/x] \subseteq V$, which proves that $S[1/x] = T$.

(2)  By (1), there is $i \geq 0$ such that $T$ is a localization of $R_i$, and thus 
$T$ is an intersection of the $(R_i)_P$ that contain $S$, where $P$ is a height 1 prime ideal of $R_i$. It follows from 
 Lemma~\ref{isom off closed fiber} that each $(R_i)_P$ is a localization of $R$ at a height 1 prime ideal of $R$ or $(R_i)_P$ is an order valuation of some $R_j$, $j < i$.  By Proposition~\ref{new order valuations}, there are at most $(\dim R) -1$ such order valuation rings. 

(3) By (1), $T$ is a localization of a regular local ring and hence is a 
  Noetherian regular ring. 
  Suppose that $A$ is a Noetherian overring of $S$.  Let $M$ be a maximal ideal of $A$. If $M \cap S = N$, then since there is a DVR that dominates the Noetherian ring $A_M$, this DVR dominates also $S$, which by 
Corollary~\ref{DVR case} implies that 
$S$ is a DVR, contrary to our assumption at the beginning of the proof.  Thus $T = S[1/x] \subseteq S_{M \cap S} \subseteq A_M$. Since this is true for every maximal ideal $M$ of $A$, it follows that $T \subseteq A$, which verifies (3). 
\end{proof} 

\begin{definition}
{\em In light of Theorem~\ref{hull}(3), we define the Noetherian regular  UFD   $T$ of  Theorem~\ref{hull}  to be the  {\it Noetherian hull} of the Shannon extension $S$. } 
 \end{definition}



\begin{remark} \label{last remark}
{\em Assume the notation of Setting~\ref{setting 1} and assume $\dim S > 1$. 
Proposition~\ref{new order valuations} implies that for $n \gg 0$, $S$ does not contain the order valuation ring $V_n$.
The proof of Theorem~\ref{hull}(1) shows that for every height $1$ prime ideal $P$ of $S$ we have $S_P = (R_n)_{p R_n} = T_{pT}$ for some prime element $p$ of $R_n$.
Notice, however, that for $i > n$, the ideal $pR_i$ is not a prime ideal.}
\end{remark}

\begin{proposition}\label{4.4} 

With notation as in Theorem~\ref{hull}, fix $n \gg 0$ such that $T$ is a localization of $R_n$.
Let $a \in R_n$ be nonzero, and for $i \ge n$, consider the transform $(a R_n)^{R_i}$.
Then $(a R_n)^{R_i} = R_i$ for $i \gg 0$ if and only if $a \in T^{\times}$.\end{proposition}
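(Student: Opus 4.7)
The plan is to use the UFD structure of $R_n$ to factor $a$ into primes, reduce to the case of a single prime element, and then combine Lemma~\ref{Lipman}(3) with the representation of $T$ in Theorem~\ref{hull} to characterize when that prime is a unit in $T$.

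First I would reduce to the case where $a$ is a prime element of $R_n$. Since $R_n$ is a UFD, write $a = u\prod_{j=1}^{k} p_j^{e_j}$ with $u \in R_n^{\times}$ and each $p_j$ prime. By multiplicativity of the transform (Lemma~\ref{Lipman}(2)), $(aR_n)^{R_i} = \prod_j ((p_j R_n)^{R_i})^{e_j}$, and by Lemma~\ref{Lipman}(3) each factor is either $R_i$ or a proper principal prime of the domain $R_i$. Hence the product equals $R_i$ iff every factor does, and so $(aR_n)^{R_i} = R_i$ for $i \gg 0$ iff each $(p_jR_n)^{R_i} = R_i$ for $i \gg 0$ (take the maximum of finitely many thresholds). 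Likewise $a \in T^{\times}$ iff each $p_j \in T^{\times}$. So I may restrict to a single prime element $p \in R_n$.

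Next I would rephrase the transform condition by a simple monotonicity observation. By Lemma~\ref{Lipman}(3), $(pR_n)^{R_i} = R_i$ iff $R_i \not\subseteq (R_n)_{pR_n}$. Since the tower $\{R_i\}_{i \ge n}$ is ascending, once $R_i \not\subseteq (R_n)_{pR_n}$ holds for some $i$ the same holds for every larger index. Consequently, $(pR_n)^{R_i} = R_i$ for $i \gg 0$ iff $S = \bigcup_{i \ge n} R_i \not\subseteq (R_n)_{pR_n}$.

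The substantive step is then to show $p \in T^{\times}$ iff $S \not\subseteq (R_n)_{pR_n}$. (If $S$ is a DVR then $T = F$ by the footnote to Theorem~\ref{hull}(3), and the statement is vacuous, so I assume otherwise.) Suppose first that $S \not\subseteq (R_n)_{pR_n}$. Then certainly $T \not\subseteq (R_n)_{pR_n}$, so the height-$1$ prime $pR_n$ does not survive in the localization $T$ of $R_n$, i.e.\ $p$ is inverted in $T$. Conversely, assume $S \subseteq (R_n)_{pR_n}$. Since $\m_n$ has height at least $2$ while $pR_n$ has height $1$, I can pick $y \in \m_n \smallsetminus pR_n$; then $y \in N$ is a unit in the DVR $(R_n)_{pR_n}$, so $(R_n)_{pR_n}$ contains but does not dominate $S$. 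Write $T = S[1/x]$ for an element $x$ such that $xS$ is $N$-primary (Theorem~\ref{hull}(1)). A standard argument, identical to the one used to prove Theorem~\ref{hull}(1), then shows that any DVR containing but not dominating $S$ contains $T$: some power $y^m$ lies in $xS$, say $y^m = xs$, and $y$ being a unit in $(R_n)_{pR_n}$ while $s \in S \subseteq (R_n)_{pR_n}$ forces $x$ to be a unit there as well. Thus $T \subseteq (R_n)_{pR_n}$, and $p$ is not a unit in $T$.

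The main obstacle is the last step, where I need to rule out the possibility that $T$ escapes $(R_n)_{pR_n}$ even though $S$ does not. The key is the observation that the presence of a height-$1$ prime of $R_n$ forces a unit of $(R_n)_{pR_n}$ to sit inside $N$, which breaks domination and lets the $N$-primary element $x$ be inverted inside $(R_n)_{pR_n}$; once this is recognized, the rest is bookkeeping with the explicit description $T = S[1/x]$.
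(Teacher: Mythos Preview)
Your proof is correct, but it proceeds quite differently from the paper's. The paper never factors $a$ into primes. For the forward direction it simply observes that $\m_iT=T$ for all $i\ge n$, so $aT=(aR_n)^{R_i}T$; hence if the transform equals $R_i$ for even one $i$, then $aT=T$. For the converse it argues directly with $a$: if $(aR_n)^{R_i}\subsetneq R_i$ for every $i$, then the height-$1$ primes containing these transforms form a chain $\p_n\subseteq\p_{n+1}\subseteq\cdots$ with $\p_{i+1}\cap R_i=\p_i$, whose union $P=\bigcup_i\p_i$ is a height-$1$ prime of $S$ containing $a$; since $P$ is nonmaximal it survives in $T=S[1/x]$, so $a\notin T^\times$.

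Your route---reduce to a single prime $p$, invoke Lemma~\ref{Lipman}(3) to rewrite the transform condition as $S\not\subseteq (R_n)_{pR_n}$, and then check directly via $T=S[1/x]$ whether $p$ is inverted in $T$---is a clean alternative. It makes the role of each prime factor of $a$ explicit and sidesteps the chain construction (which in the paper implicitly uses that only finitely many primes of $R_n$ divide $a$, so one must persist through all stages). The paper's argument, on the other hand, is shorter and produces a height-$1$ prime of $S$ directly, which dovetails with the $\epd(S)$ framework used elsewhere. One small quibble: the DVR case is not literally ``vacuous''; rather, when $T=F$ one still needs that $S\not\subseteq (R_n)_{pR_n}$ for every prime $p$ of $R_n$, but your converse argument already covers this (it would force $T\subseteq (R_n)_{pR_n}\subsetneq F$, a contradiction), so the exclusion is harmless.
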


\begin{proof}
Since $\m_i T = T$ for all $i \ge n$, we have $a T = (a R_n)^{R_i} T$.
If $(a R_n)^{R_i} = R_i$ for some $i \ge n$, then $a T = T$.

To see the converse, assume that $(a R_n)^{R_i} \subsetneq R_i$ for all $i \ge n$.
By construction of transform, the height $1$ primes of $(a R_n)^{R_{i+1}}$ lie over height $1$ primes of $(a R_n)^{R_i}$.
This yields an ascending sequence $\{ \p_{i} \}_{i \ge n}$, where $\p_i$ is a height $1$ prime of $R_i$.
We have $(a R_n)^{R_i} \subset \p_{i}$ and $\p_{i+1} \cap R_i = \p_i$ for all $i \ge n$.
It follows that $P = \bigcup_{i \ge n} \p_i$ is a height $1$ prime in the directed union $S$, and $a \in P$, so $a \notin T^{\times}$.
\end{proof}


\section{The boundary valuation of a Shannon extension}

Let ${\ff X}$ denote the set of all valuation overrings
of $R$. The Zariski topology on ${\ff X}$ has as a basis of open sets the sets of the form $\{V \in {\ff X} ~| ~ E  \subseteq V\}$, 
where $E$ ranges over the finite subsets of the quotient field $F$   of $R$. For our purposes we need a finer topology: The {\it patch topology} on $\mathfrak{X}$ has a basis of open sets of the form
 $\{V \in {\ff{X}}~ | ~ G \subseteq V$ and $H \subseteq {\ff M}_V\}$, where $G$ and $H$ range over all finite subsets of $F$ \cite{Hoc}.    

\begin{definition}  {\em Assume Setting~\ref{setting 1}. 
A valuation overring $V$ of $R$ is a {\it boundary valuation ring} of $S$ if in the patch topology $V$ is a limit point of the order valuation rings $V_i$.}
\end{definition}

 The terminology is explained by the fact that the subspace $\{V_i\mid i\geq 0\}$ is discrete in the patch topology and hence a valuation ring $V \in {\ff X}$ is a boundary valuation ring of $S$ if and only if $V$ is a boundary point in ${\ff{X}}$ of the set $\{V_i\mid i \geq 0\}$  with respect to the patch topology. 
   Equivalently, $V \in {\ff X}$ is a boundary valuation ring of $S$ if and only if for each pair of finite subsets $G  \subseteq V$ and $H \subseteq {\ff M}_V$, there exist infinitely many $i$ such that $G \subseteq V_i$ and $H \subseteq {\ff M}_{V_i}$.  
 
In Corollary~\ref{equation V cor}, we show that  $S$ has a unique boundary valuation ring, and we use this valuation ring in Theorem~\ref{flat} to give an intersection decomposition of $S$ in terms of its boundary valuation ring and Noetherian hull.

\begin{lemma}\label{compare}
Assuming Setting~\ref{setting 1}, let $q \in F$ be nonzero.
Then either $\ord_{n} (q) > 0$ for $n \gg 0$, $\ord_{n} (q) = 0$ for $n \gg 0$, or $\ord_{n} (q) < 0$ for $n \gg 0$.
\end{lemma}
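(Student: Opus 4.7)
The plan is to write $q = a/b$ with $a, b \in R_0 \setminus \{0\}$ and study the two integer sequences $\{\ord_n(a)\}$ and $\{\ord_n(b)\}$, whose difference is $\ord_n(q)$. The first step is to establish the monotonicity $\ord_n(c) \le \ord_{n+1}(c)$ for every nonzero $c \in R_0$: since $\m_n R_{n+1} = y_n R_{n+1}$ for some regular parameter $y_n$ of $R_n$ and $y_n \in \m_{n+1}$ (because $R_{n+1}$ dominates $R_n$), we have $\m_n^k R_{n+1} = y_n^k R_{n+1} \subseteq \m_{n+1}^k$, so $c \in \m_n^{\ord_n(c)}$ forces $c \in \m_{n+1}^{\ord_n(c)}$. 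Consequently each of $\{\ord_n(a)\}$ and $\{\ord_n(b)\}$ is non-decreasing in $\mathbb{Z}_{\ge 0}$, and therefore either stabilizes at a finite value or diverges to $+\infty$.

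This handles the routine cases. If both stabilize, $\ord_n(q) = \ord_n(a) - \ord_n(b)$ is eventually constant. If exactly one diverges, $\ord_n(q) \to \pm\infty$, so it is eventually of constant nonzero sign. In either case the trichotomy holds.

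The substantive case is when both $\ord_n(a), \ord_n(b) \to \infty$. For this I would use the transform factorization $a = y_0^{\alpha_0} \cdots y_{n-1}^{\alpha_{n-1}} a_n$ in $R_n$, where $a_n$ is the iterated transform of $aR_0$ and $\alpha_i := \ord_i(a_i)$; defining $b_n$ and $\beta_i$ analogously yields
$$\ord_n(q) \;=\; \sum_{i=0}^{n-1}(\alpha_i - \beta_i)\ord_n(y_i) \;+\; (\alpha_n - \beta_n).$$
By Proposition~\ref{4.4}, $a_n \in R_n^{\times}$ for $n \gg 0$ exactly when $a \in T^{\times}$, and the same for $b$. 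In the cleanest subcase, both $a, b \in T^{\times}$: the differences $\alpha_i - \beta_i$ vanish for all $i$ beyond some $M$, and the displayed sum collapses to a fixed integer linear combination $\sum_{i<M}(\alpha_i - \beta_i)\ord_n(y_i)$. Rewriting telescopically via $z_j := y_j/y_{j+1} \in R_{j+2} \subseteq S$ (together with $z_{M-1} := y_{M-1}$), this becomes $\sum_{j<M} C_j \ord_n(z_j)$ with fixed $C_j \in \mathbb{Z}$; each $\ord_n(z_j)$ is again non-decreasing (stabilizing if $z_j \in S^{\times}$, divergent if $z_j \in N$ and not a regular parameter in any $R_i$ for $i \gg 0$), and the eventual sign of $\ord_n(q)$ follows from a direct inspection of this finite sum.

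The main obstacle will be the subcases in which $a$ or $b$ is not a unit of $T$: by Proposition~\ref{4.4}, the corresponding transforms $a_n$ or $b_n$ stay in $\m_n$ for all $n \gg 0$, so the tail $\sum_{i \ge M}(\alpha_i - \beta_i)\ord_n(y_i) + (\alpha_n - \beta_n)$ does not terminate. I expect the correct handling is to exploit the ascending chains of height-$1$ primes $\{\p_i\}$ tracking the transforms of $aR_M$ and $bR_M$ (as in the proof of Proposition~\ref{4.4}), which produce height-$1$ primes $P_a, P_b$ of $S$ whose localizations $S_{P_a} = T_{pT}$ govern the growth of $\ord_n(a)$ and $\ord_n(b)$ respectively. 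Comparing these governing DVR overrings---whether $P_a = P_b$, and, if so, how their contributions balance against the $T^{\times}$-unit part---should force $\ord_n(q)$ to fall into exactly one of the three stated regimes and rule out oscillation of its sign.
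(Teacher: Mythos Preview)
Your monotonicity observation $\ord_n(c)\le \ord_{n+1}(c)$ for $c\in R_0$ is correct, and your handling of the cases where at most one of $\ord_n(a),\ord_n(b)$ diverges is fine. The gap is in the case where both diverge. Even in your ``cleanest subcase'' $a,b\in T^\times$, the step ``the eventual sign of $\ord_n(q)$ follows from a direct inspection of this finite sum'' is not justified and is in fact circular. Your finite sum has the form $\sum_{j<M} C_j\,\ord_n(z_j)$ with $C_j\in\SZ$ of mixed signs and with several of the $\ord_n(z_j)$ diverging to $+\infty$; separating the positive and negative $C_j$ rewrites this as $\ord_n(p)-\ord_n(p')$ for elements $p,p'\in S$, which is exactly the original problem stated over some $R_m$ in place of $R_0$. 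There is no descending invariant in your reduction that would support an induction: the $y_j$ need not be $N$-primary, so the $z_j$ need not lie in $T^\times$, and the new numerator and denominator need not satisfy the hypothesis of the subcase you started from. The remaining subcases (where $a$ or $b$ is not a unit of $T$) are only sketched, and the suggested comparison of height-$1$ primes $P_a,P_b$ would in any event only control the growth of $\ord_n(a)$ and $\ord_n(b)$ up to bounded error, which again leaves a difference of two divergent sequences with no sign control.

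The paper's argument avoids this circularity by a different idea: after writing $q=a_0/b_0$ with $a_0,b_0\in R$ relatively prime, it tracks the transforms $Q_i$ of the \emph{ideal} $Q_0=(a_0,b_0)R$ rather than the two elements separately. Setting $e_i=\ord_i(Q_i)$ and $a_{i+1}=x_i^{-e_i}a_i$, $b_{i+1}=x_i^{-e_i}b_i$, one keeps $q=a_i/b_i$ and $Q_i=(a_i,b_i)R_i$. The key input is that $\{e_i\}$ is \emph{nonincreasing} (this is \cite[Lemma~3.6 and Remark~3.7]{HKT} applied to the height-$2$ ideal $Q_i$), hence stabilizes at some $e>0$. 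Once stabilized, whichever of $a_N,b_N$ realizes $\ord_N=e$ has its principal transform controlled by the same nonincreasing lemma, forcing $\ord_i(a_i)\le \ord_i(b_i)$ (or the reverse) for all $i\ge N$; this gives the trichotomy directly. What your approach is missing is precisely this joint control of numerator and denominator via the transform of the ideal they generate.
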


\begin{proof}
If $q \in R_n$ or $q^{-1} \in R_n$ for some $n \ge 0$, then  Lemma~\ref{compare}   is clear.
Assume that $q \notin R_n$ and $q^{-1} \notin R_n$ for all $n \ge 0$.



We may write $q = a_0 / b_0$, where $a_0, b_0 \in R$ are relatively prime.
Since $q \notin R$ and $q^{-1} \notin R$, we must have that $a_0, b_0 \in \m$.
Let $Q_0 = (a_0, b_0) R$, so $Q_0$ is an ideal of $R$ of height $2$.
Let $\{ Q_i \}_{i=0}^{\infty}$ be the sequence of transforms of $Q_0$ in the sequence $\{ R_i \}$; i.e., for each $i \ge 0$, $Q_{i+1}$ is the transform of $Q_i$ in $R_{i+1}$.
For each $i$, 
let $x_i \in \m_i$ such that $x_{i} R_{i+1} = \m_i R_{i+1}$.
Then by Remark~\ref{GR lemma}, $Q_{i} R_{i+1} = x_{i}^{e_i} Q_{i+1}$, where $e_i = \ord_{i} (Q_i)$. Let $\{a_i\}$ and $\{b_i\}$ be sequences of elements of $S$ defined inductively for $i \ge 0$ by $a_{i+1} = x_i^{-e_i} a_i$ and $b_{i+1} = x_{i}^{- e_i} b_i$. It follows that $Q_{i} = (a_i, b_i) R_i$ and $q = a_i / b_i$ for all $i \ge 0$.
If $e_{i} = 0$ for any $i \ge 0$, then one of either $a_{i}$ or $b_{i}$ is a unit in $R_{i}$, so $q \in R_i$ or $q^{-1} \in R_i$.
Thus we may assume that $e_i > 0$ for all $i \ge 0$.

By \cite[Lemma 3.6 and Remark 3.7]{HKT}, $\ord_i (Q_i) \ge \ord_{i+1} (Q_{i+1})$ for all $i \ge 0$, so the sequence $\{ e_i \}$ is a nonincreasing sequence of non-negative integers. Thus $\{e_i\}$ stabilizes to some value $e > 0$, say $\ord_i (Q_i) = e$ for all $i \ge N$.

Notice that if $\ord_{i} (a_i) = \ord_{i} (Q_i)$, then $a_{i+1} R_{i+1}$ is the transform of the principal ideal $a_i R_{i}$ in $R_{i+1}$, so again by \cite[Lemma 3.6 and Remark 3.7]{HKT}, $\ord_{i+1} (a_{i+1}) \le \ord_{i} (a_i)$.
Therefore if $\ord_{j} (a_j) = e$ for any $j$, then $\ord_{i} (a_i) = e \le \ord_{i} (b_i)$ for all $i \ge j$.
Similarly if $\ord_{j} (b_j) = e$ for any $j$, then $\ord_{i} (b_i) = e \le \ord_{i} (a_i)$ for all $i \ge j$.
Since either $\ord_{N} (a_N) = e$ or $\ord_{N} (b_N) = e$,   the lemma follows.
\end{proof}

\begin{corollary} \label{equation V cor}
Assume Setting~\ref{setting 1}.
The Shannon extension $S$ has a unique boundary valuation ring $V$,  and 
	\begin{equation}\label{equation V}
	V ~   =   ~    \bigcup_{n \ge 0} \bigcap_{i \ge n} V_i = \{ q \in F ~ | ~  \ord_i (q) \ge 0 \text{ for } i \gg 0 \}.\end{equation}
\end{corollary}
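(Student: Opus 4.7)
The plan is to take $V := \{q\in F \mid \ord_i(q) \ge 0 \text{ for } i\gg 0\}$ as the candidate boundary valuation ring (so that the equality of the two expressions in~(\ref{equation V}) is a tautology from the definition of union--intersection), verify it is a valuation ring on $F$, show it is a boundary point of $\{V_i\}$ in the patch topology, and finally use Lemma~\ref{compare} to rule out any other limit point.

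First I would verify that $V$ is a valuation ring on $F$. The ring axioms follow because each $\ord_i$ is a valuation on $F$, so that $\ord_i(ab) = \ord_i(a) + \ord_i(b)$ and $\ord_i(a+b) \ge \min\{\ord_i(a),\ord_i(b)\}$ propagate the ``eventually nonnegative'' condition through sums and products. The valuation-ring property itself is exactly what Lemma~\ref{compare} delivers: for $q \in F^{\times}$, either $\ord_i(q)\ge 0$ for $i\gg 0$ (so $q\in V$) or $\ord_i(q) < 0$ for $i\gg 0$ (so $q^{-1}\in V$). The same lemma identifies $\mathfrak{M}_V = \{q\in F \mid \ord_i(q) > 0 \text{ for } i\gg 0\}$, since the units of $V$ are precisely those $q$ with $\ord_i(q)=0$ for $i\gg 0$.

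Next I would show $V$ is a boundary valuation ring of $S$. Given finite $G\subseteq V$ and $H \subseteq \mathfrak{M}_V$, every $g\in G$ lies in $V_i$ for $i\gg 0$ and every $h\in H$ lies in $\mathfrak{M}_{V_i}$ for $i\gg 0$, so every basic patch-open neighborhood of $V$ contains cofinitely many $V_i$; hence $V$ is a limit point of $\{V_i\}$ in $\mathfrak{X}$.

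For uniqueness, suppose $V'$ is any boundary valuation ring of $S$. For each $q\in V'$, the basic patch-neighborhood determined by $G=\{q\}$, $H=\emptyset$ meets $\{V_i\}$ infinitely often, so $q\in V_i$ for infinitely many $i$; Lemma~\ref{compare} then forces $\ord_i(q)\ge 0$ for $i\gg 0$, i.e., $q\in V$. The analogous choice $G=\emptyset$, $H=\{q\}$ applied to $q\in \mathfrak{M}_{V'}$ yields $q\in \mathfrak{M}_V$. Thus $V'\subseteq V$ and $\mathfrak{M}_{V'}\subseteq \mathfrak{M}_V$; the reverse inclusion $\mathfrak{M}_V\subseteq \mathfrak{M}_{V'}$ holds automatically for valuation rings of $F$ satisfying $V'\subseteq V$, so the maximal ideals agree and therefore $V'=V$. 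The main subtle point I anticipate is precisely this last implication: once the maximal ideals coincide, any $v\in V\setminus V'$ would give $v^{-1}\in\mathfrak{M}_{V'}=\mathfrak{M}_V$, producing $1=v\cdot v^{-1}\in\mathfrak{M}_V$---a contradiction that closes the argument.
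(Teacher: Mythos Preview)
Your proof is correct and follows essentially the same approach as the paper: define the candidate $V$ by the eventual-nonnegativity condition, invoke Lemma~\ref{compare} to see it is a valuation ring, and for uniqueness upgrade ``infinitely many $i$'' to ``all $i\gg 0$'' via Lemma~\ref{compare} and conclude from domination. One small point in your favor: you explicitly verify that the candidate $V$ is itself a patch limit point of the $V_i$ (existence), whereas the paper's proof only treats uniqueness and leaves existence implicit.
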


\begin{proof} Let $V' =  \{ q \in F ~ | ~  \ord_i (q) \ge 0 \text{ for } i \gg 0 \}$. 
As a directed union of rings, $V'$ is a ring, and in view of Lemma~\ref{compare}, $V'$ is in fact a valuation ring.
Let $V$ be a boundary valuation ring of $S$.
Then for $q \in V$, $q \in V_i$ for infinitely many $i$, so by Lemma~\ref{compare}, $q \in V_i$ for all $i \gg 0$.
Thus $V \subseteq V'$.
Furthermore, for $q \in {\ff M}_{V}$, $q \in {\ff M}_{V_i}$ for infinitely many $i$, so similarly, $q \in {\ff M}_{V_i}$ for all $i \gg 0$.
Thus ${\ff M}_{V} = {\ff M}_{V'} \cap V$, so $V = V'$.
\end{proof}

\begin{theorem} \label{flat}  Assume Setting~\ref{setting 1}. Then $S = V \cap T$, where $V$ is the unique boundary valuation ring of $S$  and $T$ is the Noetherian hull of $S$. 
\end{theorem}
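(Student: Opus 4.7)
The inclusion $S \subseteq V \cap T$ is the easy half: $S \subseteq T$ by Theorem~\ref{hull}(3), and for $S \subseteq V$, every $s \in S$ lies in some $R_i$, hence in $V_j$ for all $j \geq i$, hence in $V$ by equation~(\ref{equation V}).

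For the reverse inclusion $V \cap T \subseteq S$, the plan is to take $q \in V \cap T$ and use Theorem~\ref{hull}(1) to write $q = s/x^k$, where $x \in N$ is the element supplied by Proposition~\ref{flat lemma} (so $xS$ is $N$-primary and $T = S[1/x]$), $s \in S$, and $k \geq 0$.  I then fix an index $n$ large enough that $s, x \in R_n$, that $T$ is a localization of $R_n$, and that $\ord_i(q) \geq 0$ for every $i \geq n$, the last via Corollary~\ref{equation V cor}.  Because $1/x \in T$, the element $x$ is a unit in $T$, so Proposition~\ref{4.4} yields an index $i_0 \geq n$ with $(xR_n)^{R_i} = R_i$ for every $i \geq i_0$.

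The heart of the argument will be an induction on $i \geq n$ establishing that $q$ admits a representation $q = \sigma_i/\tilde{x}_i^{\,k}$ with $\sigma_i, \tilde{x}_i \in R_i$ and $\tilde{x}_i R_i = (xR_n)^{R_i}$.  The base case takes $\sigma_n = s$, $\tilde{x}_n = x$.  For the step from $R_i$ to $R_{i+1}$, Remark~\ref{GR lemma} applied to the principal ideals $\sigma_i R_i$ and $\tilde{x}_i R_i$ produces $\tilde{\sigma} := \sigma_i/x_i^{\ord_i(\sigma_i)} \in R_{i+1}$ and $\tilde{x}_{i+1} := \tilde{x}_i/x_i^{\ord_i(\tilde{x}_i)} \in R_{i+1}$.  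Substituting gives, as an exact identity in $F$,
\[
q \;=\; \frac{x_i^{\,e_i}\,\tilde{\sigma}}{\tilde{x}_{i+1}^{\,k}}, \qquad e_i := \ord_i(\sigma_i) - k\,\ord_i(\tilde{x}_i) = \ord_i(q).
\]
Here is where $q \in V$ enters: since $i \geq n$, $\ord_i(q) \geq 0$ and therefore $e_i \geq 0$, so $\sigma_{i+1} := x_i^{\,e_i}\,\tilde{\sigma} \in R_{i+1}$.  Transitivity of the transform (Lemma~\ref{Lipman}(1)) identifies $\tilde{x}_{i+1} R_{i+1}$ with $(xR_n)^{R_{i+1}}$, closing the induction.

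The conclusion then falls out: for $i \geq i_0$ the equality $\tilde{x}_i R_i = (xR_n)^{R_i} = R_i$ makes $\tilde{x}_i$ a unit in $R_i$, so $q = \sigma_i\tilde{x}_i^{-k} \in R_i \subseteq S$.  The two hypotheses on $q$ play complementary roles: $q \in V$ guarantees the nonnegativity of each $e_i$ and so keeps the numerator $\sigma_{i+1}$ inside $R_{i+1}$ at every stage, while $q \in T$, through Proposition~\ref{4.4}, is what eventually trivializes the denominator.  I expect the principal technical point to watch will be checking that Remark~\ref{GR lemma} really delivers $\tilde{\sigma}$ and $\tilde{x}_{i+1}$ as elements of $R_{i+1}$ (rather than merely fractional generators), which follows from $\sigma_i \in {\m}_i^{\ord_i(\sigma_i)}$ together with ${\m}_i R_{i+1} = x_i R_{i+1}$.
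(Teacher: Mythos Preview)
Your proof is correct, and it takes a genuinely different route from the paper's argument.

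The paper does not use Proposition~\ref{4.4} or an inductive rewriting of $q$ at all. Instead, after reducing (via Proposition~\ref{finitely many order valuations}) to the situation where no order valuation ring $V_i$ contains $S$, it chooses a single index $k$ large enough that $s \in R_k$ and $R_k \not\subseteq V_i$ for $i = 0,\ldots,n$, and then verifies $q \in R_k$ by checking $q \in (R_k)_Q$ for every height~$1$ prime $Q$ of the Krull domain $R_k$. If $x \notin Q$ this is immediate; if $x \in Q$ then $Q \cap R = \m$, which forces $(R_k)_Q$ to be one of the order valuation rings $V_i$, and the choice of $k$ forces $i > n$, whence $q \in V_i = (R_k)_Q$.

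Your argument, by contrast, is dynamic: you track how the fraction $q = \sigma_i/\tilde{x}_i^{\,k}$ evolves along the sequence, with $q \in V$ controlling the numerator (keeping $e_i \ge 0$ so $\sigma_{i+1} \in R_{i+1}$) and $q \in T$, through Proposition~\ref{4.4}, eventually killing the denominator. This has the virtue of making the separate roles of $V$ and $T$ completely transparent, and it avoids the preliminary reduction to the case where no $V_i$ contains $S$. The paper's approach is shorter once one has the structural fact about height~$1$ primes of $R_k$ (they are either in $\epd(R)$ or are order valuation rings of earlier $R_j$), and it does not need Proposition~\ref{4.4}; your approach is more self-contained and algorithmic. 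Both are valid and neither is obviously superior.
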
 

\begin{proof} 

  First we observe that $S \subseteq V \cap T$. For this it is clearly enough to verify that $S \subseteq V$.  Let $s \in S$. 
  Then there exists $i$ such that $s \in R_i$. Since $R_i$, hence $s$, is contained in every $V_j$, $j \geq i$,  we have by Corollary~\ref{equation V cor}
  that $s \in V$.  Thus 
    $S \subseteq   V \cap T$. 
It remains to prove that $V \cap T \subseteq S$.  If $S$ is a DVR, then since $S \subseteq V \cap T$ and the only proper overring of $S$ is the quotient field of $S$, we have $S = V \cap T$. (Note that $V \ne F$, since by Corollary~\ref{equation V cor}, $V$ dominates $R$.) 
Thus we assume for the rest of the proof  that $S$ is not a DVR.

By Proposition~\ref{new order valuations}, there exists $k > 0$ such that none of the $\{V_i\mid i \geq k\}$ contain $S$.  Thus by replacing $R$ with $R_k$ we may assume without loss of generality that none of the $V_i$ contains $S$.  By Theorems~\ref{flat lemma} and~\ref{hull}(1) there exist $i > 0$ and a regular parameter $x$ in $R_i$ such that ${\m}_{i}R_{i+1} = xR_{i+1}$, $xS$ is $N$-primary and 
$T = S[1/x]$. By replacing $R$ with $R_i$ we may assume without loss of generality that $i = 0$, so that ${\ff m}R_1 = xR_1$.  


Let $q \in V \cap T$, and write $q = s/x^e$ for some $s \in S$ and $e>0$.  By Corollary~\ref{equation V cor} there exists $n>0$ such that $q \in V_i$ for all $i > n$.  
Since $s \in S$ and none of the order valuation rings $V_i$ contain $S$, we may  
choose $k >n$ such that 
\begin{center} $s \in R_k$ \ \ and \ \  $R_k \not \subseteq V_i \ \:  \forall  i =0,1,2,\ldots,n$. 
\end{center}
We claim $q \in R_{k}$.  Since $R_{k}$ is a Krull domain and hence an intersection of its localizations at height $1$ prime ideals,  it suffices to show   that  $q \in (R_{k})_Q$ for each height  $1$ prime ideal $Q$ of $R_{k}$.  
Let $Q$ be a height 1 prime ideal of $R_{k}$.
If $x \not \in Q$, then clearly $q =s/x^e \in (R_{k})_Q$.  
Suppose $x \in Q$. 
Since $\m R_1 = x R_1 \subseteq x R_k \subseteq Q$, it follows that $Q \cap R = \m$.
Thus $(R_k)_{Q} \notin \epd (R)$, so $(R_k)_{Q} = V_i$ for some $i \ge 0$.
Since $R_k \subseteq V_i$, it follows that $i > n$, so $q \in V_i$.
\end{proof}

\begin{corollary} Assume Setting~\ref{setting 1}. If $\dim S = 1$, then $S$ is the boundary valuation ring $V$. 
\end{corollary}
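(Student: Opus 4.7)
The plan is to invoke the decomposition $S = V \cap T$ from Theorem~\ref{flat} and show that the hypothesis $\dim S = 1$ forces the Noetherian hull $T$ to collapse to the quotient field $F$, so that the intersection reduces to $V$ itself.

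First I would recall, via Proposition~\ref{flat lemma}, that there exists $x \in N$ with $\sqrt{xS} = N$, and then Theorem~\ref{hull}(1) identifies $T = S[1/x]$. Under the hypothesis $\dim S = 1$, the maximal ideal $N$ is the unique nonzero prime of $S$, so $x$ lies in every nonzero prime of $S$. Inverting $x$ therefore inverts every nonzero prime, forcing $S[1/x]$ to be a field, namely $F$. (Equivalently, one may appeal directly to Theorem~\ref{hull}(2) together with Proposition~\ref{localizations of S}: if $\dim S = 1$, there are no nonmaximal nonzero primes of $S$ to contribute essential prime divisors, and by Proposition~\ref{new order valuations}, no order valuation ring $V_i$ can contain $S$ either in this situation, so the intersection defining $T$ is empty and equals $F$; this is the content of the footnote attached to Theorem~\ref{hull}(3).)

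Second, I would combine this with $S = V \cap T$ from Theorem~\ref{flat} to conclude $S = V \cap F = V$. One only needs to note that $V \neq F$ to make this identification meaningful, and that is immediate from Corollary~\ref{equation V cor}, which guarantees $V$ dominates $R$ and hence is a proper valuation subring of $F$.

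No real obstacle arises; the work has all been done already in Theorem~\ref{flat} and Theorem~\ref{hull}. The only point requiring a line of justification is the collapse $T = F$, and this is essentially tautological from the characterization $T = S[1/x]$ once one observes that $\dim S = 1$ leaves no nonzero prime in $S[1/x]$.
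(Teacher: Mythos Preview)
Your proof is correct and follows essentially the same approach as the paper: use Theorem~\ref{hull}(1) to write $T = S[1/x]$, observe that $\dim S = 1$ forces $T = F$, and then apply $S = V \cap T$ from Theorem~\ref{flat}. The only cosmetic difference is that the paper simply takes any $0 \ne x \in N$ (which is automatically $N$-primary when $\dim S = 1$) rather than invoking Proposition~\ref{flat lemma}.
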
 

\begin{proof} Let $0 \ne x \in N$. By Theorem~\ref{hull}, $T = S[1/x]$, so, since $\dim S = 1$, we have $T = S[1/x] =  F$. By Theorem~\ref{flat}, $S$ is the boundary valuation ring $V$.  
\end{proof}

\begin{corollary} \label{chain} Assume  Setting~\ref{setting 1}.  Then  the principal $N$-primary ideals of $S$ are
 linearly ordered with respect to inclusion.
\end{corollary}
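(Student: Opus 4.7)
The plan is to exploit the representation $S = V \cap T$ from Theorem~\ref{flat}, together with the fact that principal $N$-primary ideals of $S$ correspond exactly to principal ideals generated by elements that become units in the Noetherian hull $T$.

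First, I would observe that if $xS$ is $N$-primary for some $x \in N$, then by Theorem~\ref{hull}(1) we have $T = S[1/x]$, so $x$ is a unit in $T$. Thus, given two principal $N$-primary ideals $xS$ and $yS$ of $S$, both $x$ and $y$ lie in $T^{\times}$, and so the ratio $x/y$ lies in $T^{\times}$ as well.

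Next, since $V$ is a valuation ring, either $x/y \in V$ or $y/x \in V$. Without loss of generality suppose $x/y \in V$. Then $x/y \in V \cap T = S$ by Theorem~\ref{flat}, which is exactly the statement $x \in yS$, i.e.\ $xS \subseteq yS$. The symmetric case yields $yS \subseteq xS$. Either way, the two principal $N$-primary ideals are comparable.

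I do not expect any serious obstacle here: the argument is essentially a three-line consequence of the decomposition $S = V \cap T$ and the characterization of the Noetherian hull as a localization at any $N$-primary principal generator. The only thing to be mindful of is to invoke Theorem~\ref{hull}(1) in the right form so that being $N$-primary translates cleanly into being a unit of $T$, after which the valuation-theoretic comparison in $V$ does all the work.
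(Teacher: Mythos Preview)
Your proof is correct and follows essentially the same approach as the paper: both use the decomposition $S = V \cap T$ from Theorem~\ref{flat} and the fact from Theorem~\ref{hull}(1) that an $N$-primary generator is a unit in $T$, then invoke comparability in the valuation ring $V$. The paper phrases this at the ideal level (writing $yS = yV \cap T$ and comparing $yV$ with $zV$), while you phrase it at the element level (checking $x/y \in V \cap T = S$), but these are the same argument.
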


\begin{proof}  Let $y,z \in S$ be such that $yS$ and $zS$ are $N$-primary ideals. By   Theorem~\ref{flat},  we have   
 $S = V \cap T$,   where $V$ is the boundary valuation ring of $S$ and $T $ is the Noetherian hull of $S$.     By Theorem~\ref{hull}(1),   $T = S[1/y] = S[1/z]$.  It follows that 
 $yS = yV \cap T$ and $zS = zV \cap T$.  Since   $V$ is  a valuation ring, the ideals $yV$ and $zV$ 
 are comparable,  and thus so are  $yS$ and $zS$.  
\end{proof}

\begin{remark} {\em Using different techniques from the current paper, we prove in \cite{HLOST} that the boundary valuation ring of a Shannon extension always has rank at most  2, and we give constraints on its value group. This is done through an analysis of asymptotic properties of the sequence of local quadratic transforms that defines the Shannon extension. In particular, we recover a result due to Granja \cite[Proposition 7]{Gra}: 
if $S$ is a valuation ring, then $S$ is the boundary valuation ring of $S$ and, in this case, $S$ has at most rank 2. }
\end{remark}




\section{The complete integral closure of a Shannon extension} 

An element $\theta$ in the field of fractions of an integral domain $A$ 
 is  {\it almost integral} over $A$ if $A [\theta]$ is a fractional ideal of $A$.
The integral  domain $A$ is \emph{completely integrally closed} if it contains all almost integral elements in its field of fractions. The \emph{complete integral closure} of a domain is the ring of almost integral elements in its field of fractions. In general, the complete integral closure of a domain may fail to be completely integrally closed.

In this section we describe the complete integral closure of a Shannon extension. To do so, we distinguish between two classes of Shannon extensions, those that are archimedean and those that are not.  Recall that 
an integral domain $A$ is {\it archimedean} if for each nonunit $a \in A$, we have $\bigcap_{n>0}a^nA = 0$. 
A  Shannon extension $S$ is archimedean if and only if $\bigcap_{n>0}x^nS = 0$, where $x$ is as in Setting~\ref{setting 2}.

 To simplify hypotheses, we fix some notation for this section.

\begin{setting} \label{setting 2}    {\em In addition to Setting~\ref{setting 1},   assume:
\begin{itemize}
\item[(1)] 
$x \in S$  is such that $xS$ is $N$-primary (see Proposition~\ref{flat lemma});
\item[(2)]
$T$ is the Noetherian hull of $S$; 
\item[(3)]
$T$ is a localization of $R$ (see Theorem~\ref{hull});
\item[(4)] $S^*$ denotes the complete integral closure of $S$;

\item[(5)] $W$ is the  rank one valuation overring of the boundary valuation ring $V$.  
\end{itemize}
}
\end{setting}

In Setting~\ref{setting 2}, in the special case in which $S$ is a rank one valuation ring, then $S = S^* =  V = W$ and $ S^*  = S = (N:_FN)$.  


\begin{theorem}\label{cic arch}  Assume  notation as in Setting~\ref{setting 1} and \ref{setting 2}.   
Also assume that $S$ is archimedean  and not a DVR.  Let   $V$ be the  boundary valuation ring of $S$.
Then $NV$ is the height $1$ prime of $V$ and hence $W = V_{NV}$. 
Furthermore,
\begin{enumerate}
\item
 $S^* =  (N :_F N) = W \cap T  $, and 
 \item 
 $W$ is the unique rank 1  valuation overring of $S$ with this property. 
 \end{enumerate}
\end{theorem}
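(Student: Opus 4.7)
The plan is to work in three stages: identify $NV$ as the height~$1$ prime of $V$, establish the double equality $S^* = (N :_F N) = W \cap T$, then address the uniqueness of $W$. Throughout I use Theorems~\ref{flat} and~\ref{hull}, which give $S = V \cap T$ and $T = S[1/x]$, together with the standing hypothesis that $S$ is archimedean, so $\bigcap_n x^n S = 0$.

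\emph{Stage 1.} Since $N = \sqrt{xS}$ in $S$ and $V$ is a valuation ring dominating $S$, we have $\sqrt{NV} = \sqrt{xV}$, which is a prime of $V$ (radical ideals in valuation rings are prime). If $V$ has rank $1$, then $NV = M_V$ is the unique nonzero prime and $W = V$, so the claim is immediate. If $V$ has rank $2$, let $\mathfrak{p}$ be its height~$1$ prime and $\Delta$ the corresponding isolated subgroup of the value group. I would argue by contradiction that $v(x) \in \mathfrak{p}$: otherwise $v(x) \in \Delta$, which makes $x$ a unit in $W = V_{\mathfrak{p}}$, forcing $T = S[1/x] \subseteq W$ and hence $S^* \supseteq W \cap T = T$. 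But $1/x \in T$ cannot be almost integral over $S$, since $cT \subseteq S$ would require $c \in \bigcap_n x^n S = 0$. This contradiction gives $v(x) \in \mathfrak{p}$, hence $\sqrt{xV} = \mathfrak{p}$; a refinement using $n^k \in xS$ for each $n \in N$ (so $kv(n) \geq v(x)$, forcing $v(n) \in \mathfrak{p}$) upgrades this to $NV = \mathfrak{p}$, whence $W = V_{NV}$.

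\emph{Stage 2.} I would prove the chain $W \cap T \subseteq (N :_F N) \subseteq S^* \subseteq W \cap T$. The inclusion $(N :_F N) \subseteq S^*$ is standard: $fN \subseteq N$ implies $f^kN \subseteq N$ for all $k$, so $S[f] \subseteq x^{-1}N \subseteq x^{-1}S$, a fractional ideal. For $S^* \subseteq T$: Theorem~\ref{hull} gives that $T$ is a Krull domain, hence completely integrally closed. For $S^* \subseteq W$: $W$ is rank~$1$, hence completely integrally closed, and contains $S$. For the remaining inclusion $W \cap T \subseteq (N :_F N)$, fix $f \in W \cap T$ and $n \in N$; then $fn \in T$, and using the valuation on $V$, the $v$-value of $f$ is at worst negative within $\Delta$ while $v(n)$ lies strictly above $\Delta$ (since $n \in NV = \mathfrak{p}$), so $v(fn) = v(f) + v(n)$ still lies above $\Delta$, giving $fn \in \mathfrak{p} \subseteq V$. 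Thus $fn \in V \cap T = S$, and $fn \in NV \cap S = N$, as required.

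\emph{Stage 3.} Suppose $W'$ is a rank~$1$ valuation overring of $S$ with $W' \cap T = S^*$. First I would show $W'$ dominates $S$: if $x \notin M_{W'}$, then $1/x \in W' \cap T = S^*$, contradicting the observation from Stage~1 that $1/x \notin S^*$; hence $x \in M_{W'}$, and $N = \sqrt{xS} \subseteq M_{W'} \cap S$, so $W'$ dominates $S$. I expect the main obstacle to be completing the argument that $W' = W$ from this point. The natural approach is to show $V \subseteq W'$; since the only rank~$1$ valuation overrings of $V$ are $V$ itself (rank~$1$ case) and $V_{\mathfrak{p}} = W$ (rank~$2$ case), this inclusion forces $W' = W$. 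Proving $V \subseteq W'$ should exploit the characterization of $V$ as the unique patch limit of the order valuations $V_i$ (Corollary~\ref{equation V cor}) combined with the restrictive condition $W' \cap T = S^*$, presumably by showing that an element of $V \setminus W'$ would produce an element of $W' \cap T$ lying outside $S^*$.
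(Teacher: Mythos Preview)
Your Stage~1 contains two real gaps. First, in the rank-$2$ case you argue by contradiction that $x \in \mathfrak{p}$ via ``$T \subseteq W$ and hence $S^* \supseteq W \cap T = T$''; but the containment $S^* \supseteq W \cap T$ is exactly what Stage~2 is meant to establish, and your Stage~2 in turn relies on $NV = \mathfrak{p}$ from Stage~1, so the reasoning is circular. Second, even granting $x \in \mathfrak{p}$, your ``refinement'' only shows $N \subseteq \mathfrak{p}$, hence $NV \subseteq \mathfrak{p} = \sqrt{NV}$; it does not give $NV = \mathfrak{p}$ as an equality of ideals. The paper handles both points differently and without any case split on the rank of $V$ (the bound $\mathrm{rank}\,V \le 2$ is not available here; it is proved only in \cite{HLOST}). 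It first observes, via Lemma~\ref{principal case} and Proposition~\ref{maximal ideal}, that the archimedean hypothesis forces $N$ to be idempotent; then $NV$ is an idempotent ideal of the valuation ring $V$ and is therefore prime. Next, from $\bigcap_n x^n S = 0$, $S = V \cap T$, and $x^n T = T$ one gets $\bigcap_n x^n V = 0$ (clear denominators to pass from $V$ to $S$), and this forces the prime $NV$ to have height~$1$.

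The paper's Stage~2 and Stage~3 also differ from yours. For $(N:_FN) = W \cap T$ it uses the identity $N = NV \cap T$ and splits the colon: $(N:_FN) = (NV:_FN) \cap (T:_FN) = W \cap T$, invoking $(NV:_FN) = V_{NV}$ for the idempotent prime $NV$ and $NT = T$ for the second factor; this avoids the element-level value-group argument you sketch (which again leans on a rank-$2$ picture of $\Delta$). For uniqueness the paper does \emph{not} try to show $V \subseteq W'$; instead it proves that $W$ is irredundant in the representation $S^* = W \cap T$ by exhibiting $1/x^2 \in T \setminus S^*$, and then cites \cite[Corollary~1.4]{HO} to conclude that a rank-$1$ valuation ring irredundant in such an intersection is uniquely determined. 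Your proposed patch-limit approach to Stage~3 is left unfinished and appears substantially harder than this.
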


\begin{proof}  
 If $N = xS$ is a principal ideal, then  Lemma~\ref{principal case} implies that 
 $\bigcap_{n>0}N^n = \bigcap_{n > 0}x^nS$ is a nonzero ideal, a contradiction to the assumption  that $S$ is
  archimedean.   Thus $N$ is not a principal ideal of $S$. 
 By Proposition~\ref{maximal ideal}, $N$ is an idempotent ideal of $S$, thus $N V$ is an idempotent ideal of $V$.
It follows that $N V$ is a prime ideal \cite[Theorem~17.1, p. 187]{Gil}.
For an $N$-primary element $x$, since $S$ is archimedean, it must be that $\bigcap_{n=0}^{\infty} x^n S = (0)$.
From Theorem~\ref{flat}, we have $S = V \cap T$, and since by Corollary~\ref{DVR case}, 
 $V$ is not a DVR, we have from Theorem~\ref{hull}(1) that $NT = T$, and in particular $x^n T = T$.
It follows that $\bigcap_{n=0}^{\infty} x^n V = (0)$.
Thus $N V$ is the height $1$ prime of $V$, and hence $W = V_{NV}$.  

We show that $(N :_F N) = W \cap T$.
We have,
	$$N V \cap T = N V \cap (V \cap T) = N V \cap S = N$$
so we have the equality
	$$(N :_F N) = ((N V \cap T) :_F N).$$
By properties of colon ideals, it follows that
	$$((N V \cap T) :_F N) = (N V :_F N) \cap (T :_F N).$$
The fact that $NT = T$ implies that $(T :_F N) = T$.
Since $NV$ is idempotent, $W=(NV:_F N)$  \cite[Lemma~4.4, p. 69]{FS}.
We conclude that
	$$(N V :_{F} N) \cap (T :_{F} N) = W \cap T.$$
    Therefore we have established that $(N :_F N) = W \cap T$.




Next we observe that the complete integral closure of $S$ is $(N:_FN)$.  Indeed, since $(N:_FN)$ is the intersection of the completely integrally closed rings $W$ and $T$, the ring $(N:_FN)$ is completely integrally closed. Since also $(N:_FN)$ is a fractional ideal of $S$, $(N:_F N)$ is almost integral over $S$, proving (1).

Since  $ S^* = W \cap T$  and  $\dim W = 1$,  to prove (2),  
by \cite[Corollary 1.4]{HO}, it suffices to show that $W$ cannot be omitted from this representation of $S^*$, or equivalently, that $(N :_F N) \subsetneq T$.
To see this, let $x \in S$ be any $N$-primary element.
Then $\frac{1}{x^2} \in T = S [\frac{1}{x}]$, but $\frac{1}{x^2} x = \frac{1}{x} \notin N$, so $\frac{1}{x^2} \in T \setminus (N :_F N)$.
Thus $W$ cannot be omitted from $S^* = W \cap T$, and hence $W$ is the unique valuation  overring of $S$ of Krull dimension one such that   $ S^* = W \cap T$.
\end{proof}

\begin{corollary} \label{cic N}  With notation as in Theorem~\ref{cic arch},  
$N$ is the center of $W$ on $S^*$.
In particular, $N$ is a prime ideal of $S^*$.
\end{corollary}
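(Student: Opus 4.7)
The plan is to reduce the entire statement to showing that $\mathfrak{M}_W \cap S^{*} = N$, where $\mathfrak{M}_W$ denotes the maximal ideal of $W$. Once this is established, both conclusions follow at once: the first by the definition of the center of a valuation ring on a subring, and the second because the contraction of a prime ideal along the inclusion $S^{*} \hookrightarrow W$ is again prime.

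The strategy is essentially to rearrange the identities already assembled in the proof of Theorem~\ref{cic arch}. The first step would be to establish the set-theoretic equality $\mathfrak{M}_W = NV$ inside $F$. This rests on a standard feature of valuation rings: for any prime $P$ in a valuation ring $V$, the extension ideal $P V_P$ coincides with $P$ as a subset of the fraction field. Indeed, given $p \in P$ and $s \in V \setminus P$, the fact that $V$ is a valuation ring and $P$ is prime forces $s \mid p$ in $V$, so $p/s \in P$. Applied to $P = NV$ and using $W = V_{NV}$ from Theorem~\ref{cic arch}, this yields $\mathfrak{M}_W = NV$ (and as a byproduct, $\mathfrak{M}_W \subseteq V$).

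Combining this with $S^{*} = W \cap T$ from Theorem~\ref{cic arch} and the chain of equalities $NV \cap T = NV \cap (V \cap T) = NV \cap S = N$ extracted from that same proof, I would then compute
\begin{align*}
\mathfrak{M}_W \cap S^{*} \;=\; \mathfrak{M}_W \cap W \cap T \;=\; NV \cap T \;=\; N.
\end{align*}
This finishes the argument.

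No substantial obstacle is anticipated: the corollary is truly a bookkeeping consequence of Theorem~\ref{cic arch}. The only point requiring any care is the opening identification $\mathfrak{M}_W = NV$, which is a standard property of localizations of valuation rings and should probably be recorded in a single sentence rather than proved in detail.
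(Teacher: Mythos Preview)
Your proposal is correct and follows essentially the same route as the paper: both arguments hinge on the set-theoretic identity $\mathfrak{M}_W = NV_{NV} = NV$ (which the paper cites from Nagata, while you sketch the easy valuation-ring argument), then combine $S^* = W \cap T$ with the chain $NV \cap T = N$ already established in the proof of Theorem~\ref{cic arch}. The only difference is that you spell out the details, whereas the paper compresses everything into two sentences.
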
 

\begin{proof}  
By Theorem~\ref{cic arch}, $S^*  = W \cap T$, and by Theorem~\ref{flat}, $S = V \cap T$.
Using the fact that $N V = N V_{NV}$ set-theoretically \cite[11.2, p. 35]{Nag}, it follows that $N$ is the center of $W$ on $S^{*}$.
\end{proof}




\begin{corollary}\label{same units}
With notation as in Theorem~\ref{cic arch}, the units of $S^*$ are equal to the units of $S$.
\end{corollary}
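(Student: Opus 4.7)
The strategy is to establish the two inclusions. The containment $S^\times \subseteq (S^*)^\times$ is immediate from $S \subseteq S^*$, so the real content is to show that every unit of $S^*$ is already a unit of $S$. Fix $u \in (S^*)^\times$, so that $u, u^{-1} \in S^* = W \cap T$ by Theorem~\ref{cic arch}. Since $N$ is a prime (and hence proper) ideal of $S^*$ by Corollary~\ref{cic N}, we have $u, u^{-1} \notin N$.

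The key auxiliary fact is that $V$ dominates $S$. This is already implicit in the proof of Theorem~\ref{cic arch}, which shows that $NV$ is the nonzero height-$1$ prime of $V$. In particular $N \subseteq NV \subseteq \mathfrak{M}_V$, and since $N$ is maximal in $S$ we conclude $\mathfrak{M}_V \cap S = N$.

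The conclusion is a short contradiction. Suppose $u \notin V$. Since $V$ is a valuation ring, $u^{-1} \in \mathfrak{M}_V$; combined with $u^{-1} \in T$, the decomposition $S = V \cap T$ of Theorem~\ref{flat} yields
\[
u^{-1} \in \mathfrak{M}_V \cap T = \mathfrak{M}_V \cap V \cap T = \mathfrak{M}_V \cap S = N,
\]
contradicting $u^{-1} \notin N$. Hence $u \in V$, so $u \in V \cap T = S$, and since $u \notin N$ we obtain $u \in S^\times$ as desired.

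The only place where any real work is required is the domination step $\mathfrak{M}_V \cap S = N$; everything else is formal manipulation of the intersection representations $S = V \cap T$ and $S^* = W \cap T$. Domination itself is essentially a byproduct of the proof of Theorem~\ref{cic arch} together with the elementary fact that a nonzero prime ideal of a valuation ring is contained in its maximal ideal.
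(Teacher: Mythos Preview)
Your proof is correct and follows essentially the same route as the paper: use that $V$ is a valuation ring to place one of $u,u^{-1}$ in $V$, then apply $S=V\cap T$ and the fact that $N$ is proper in $S^*$. The paper's argument is slightly leaner: rather than arguing by contradiction through domination, it simply observes that one of $u,u^{-1}$ lies in $V$ (valuation ring property), hence in $V\cap T=S$, and then the first paragraph (that $N$ is proper in $S^*$) finishes it---so the domination step you flag as ``the only real work'' can in fact be bypassed entirely.
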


\begin{proof}
Since the maximal ideal $N$ of $S$ is a proper ideal of $S^{*}$, if $u \in S$ is not a unit of $S$, then $u$ is  also not a unit of $S^{*}$.
Thus it suffices to show that if $u, u^{-1} \in S^{*}$, either $u \in S$ or $u^{-1} \in S$.
Since $S^{*} = W \cap T$ by Theorem~\ref{cic arch}, it follows that $u, u^{-1} \in T$, and at least one of $u, u^{-1}$ is in $V$, say $u \in V$.
But $S = V \cap T$ by Theorem~\ref{flat}, so $u \in S$, completing the proof.
\end{proof}

\begin{corollary}\label{cic subrings}
Assume notation as in Theorem~\ref{cic arch}.
The subrings $A$ of $S^*$ that contain $S$ are in a
one-to-one inclusion preserving correspondence with the subrings of $S^*/N$ that contain the field $k = S/N$.
In particular, $S^*$ is a finitely generated $S$-algebra if and only if $S^* / N$ is a finitely generated $k$-algebra.
\end{corollary}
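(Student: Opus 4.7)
The plan is to exploit the fact, established in Corollary~\ref{cic N}, that $N$ is a prime ideal of $S^*$. Since $N$ is contained in $S$, and $S \subseteq A \subseteq S^*$ for any subring $A$ in question, $N$ is automatically a common ideal of $S$, $A$, and $S^*$. This makes the passage to residues well-defined and sets up the standard correspondence between subrings of a ring $B$ containing a common ideal $\mathfrak{a}$ and subrings of $B/\mathfrak{a}$.

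Concretely, I would let $\pi \colon S^* \twoheadrightarrow S^*/N$ be the quotient map and define $\Phi(A) = A/N = \pi(A)$ for $S \subseteq A \subseteq S^*$. This is a subring of $S^*/N$ that contains $\pi(S) = S/N = k$. In the other direction, define $\Psi(B) = \pi^{-1}(B)$ for every subring $B$ of $S^*/N$ with $k \subseteq B$. Since $N \subseteq S$, we have $\pi^{-1}(k) = S + N = S$, so $\Psi(B) \supseteq \Psi(k) = S$ and $\Psi(B) \subseteq \pi^{-1}(S^*/N) = S^*$. The verifications $\Phi \circ \Psi = \mathrm{id}$ and $\Psi \circ \Phi = \mathrm{id}$ are the usual ones: for $\Psi \circ \Phi$ one uses that $N \subseteq A$, so $\pi^{-1}(\pi(A)) = A$; for $\Phi \circ \Psi$ it is surjectivity of $\pi$ applied to $B$. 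Inclusion preservation is immediate from the definitions.

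For the ``in particular'' statement, if $S^* = S[f_1,\dots,f_n]$ then reducing mod $N$ yields $S^*/N = k[\overline{f_1},\dots,\overline{f_n}]$. Conversely, if $S^*/N = k[\overline{g_1},\dots,\overline{g_n}]$ for some $g_i \in S^*$, set $A = S[g_1,\dots,g_n]$; then $\Phi(A) = k[\overline{g_1},\dots,\overline{g_n}] = \Phi(S^*)$, and injectivity of $\Phi$ from the first part forces $A = S^*$.

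There is no serious obstacle here: the entire argument is a formal consequence of the prior corollary that $N$ is a prime (hence an) ideal of $S^*$, together with the containment $N \subseteq S$ which gives the clean identification $\pi^{-1}(k) = S$. The only point that deserves a brief explicit mention in the write-up is that last identification, since it is what pins down the correspondence on the nose (rather than up to adding elements of $N$) and it is what makes finite generation transfer cleanly in both directions.
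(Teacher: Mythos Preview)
Your argument is correct and is precisely the intended one: the paper states this corollary without proof, treating it as an immediate consequence of Corollary~\ref{cic N} (that $N$ is a prime ideal of $S^*$), and your write-up supplies exactly the standard details one would fill in. The key identification $\pi^{-1}(k)=S$, which you rightly single out, is what makes the correspondence sharp.
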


\begin{corollary} \label{divisorial N}
With notation as in Theorem~\ref{cic arch}, assume $S$ is not completely integrally closed and let $\theta \in S^{*} \setminus S$.
Then $\theta^{-1} S \cap S = N$.
\end{corollary}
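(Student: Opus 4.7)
The plan is to identify $\theta^{-1}S \cap S$ explicitly as the conductor ideal $\{a \in S : a\theta \in S\}$, then show both containments, using only two facts: that $\theta \in S^* = (N :_F N)$ by Theorem~\ref{cic arch}(1), and that $N$ is the maximal ideal of $S$.

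First I would observe that $\theta^{-1} S \cap S = \{a \in S : a\theta \in S\}$, which is an ideal of $S$ (closure under addition and $S$-multiplication is immediate). For the containment $N \subseteq \theta^{-1}S \cap S$, I would use that $\theta \in S^* = (N :_F N)$ means $\theta N \subseteq N \subseteq S$, so every $n \in N$ satisfies $n\theta \in S$, hence $n \in \theta^{-1}S \cap S$.

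For the reverse containment $\theta^{-1}S \cap S \subseteq N$, I would argue by contradiction. Suppose the ideal $\theta^{-1}S \cap S$ is not contained in $N$. Since $N$ is the unique maximal ideal of the local ring $S$, the ideal must then contain a unit $u$ of $S$. But $u \in \theta^{-1}S \cap S$ means $u\theta \in S$, so $\theta = u^{-1}(u\theta) \in S$, contradicting $\theta \in S^* \setminus S$. Combining the two containments yields the equality.

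The proof is short and uses no real obstacle beyond correctly unwinding the definitions; the only conceptual input is that $S^* = (N :_F N)$, which has already been established in Theorem~\ref{cic arch}, and the locality of $S$.
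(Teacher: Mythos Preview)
Your proof is correct and follows essentially the same approach as the paper: both use $\theta \in S^* = (N:_F N)$ to obtain $N \subseteq \theta^{-1}S \cap S$, and both use $\theta \notin S$ together with the maximality of $N$ to conclude that this ideal is proper and hence equals $N$. The only cosmetic difference is that the paper phrases the second step as $S \not\subseteq \theta^{-1}S$ implying $\theta^{-1}S \cap S \subsetneq S$, whereas you phrase it as a contradiction via a unit; these are the same observation.
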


\begin{proof}
By Theorem~\ref{cic arch}, $(N :_F N) = S^*$, so $\theta N \subseteq N$, hence $N \subseteq \theta^{-1} N \cap S \subseteq 
 \theta^{-1} S \cap S$.
Since also $\theta \notin S$, we have  $S \not\subseteq \theta^{-1} S$, hence $\theta^{-1} S \cap S \subsetneq S$.
Therefore, since $N$ is the maximal ideal of $S$, we have $\theta^{-1} S \cap S = N$.
\end{proof}

\begin{remark} \label{not coherent} {\em McAdam \cite{McAdam} defines  an integral domain 
$ A$  to be a {\it finite conductor domain}  if for elements $a, b$ in the field of fractions 
of $A$, the $A$-module $aA \cap bA$ is finitely generated.   A ring  is said to be 
{\it coherent} if every finitely generated ideal  is finitely presented. 
Chase \cite[Theorem~2.2]{Chase} proves that an integral domain $A$ is coherent if and only if the intersection of two finitely generated ideals of $A$ is finitely generated.  Thus a coherent 
domain is a finite conductor domain.  Examples of finite conductor domains that are not 
coherent are given by Glaz in  \cite[Example~4.4]{Glaz2} and by Olberding and Saydam 
in \cite[Prop. 3.7]{OS}. 
If  $S$ is archimedean but not completely integrally closed, then $S$ is not finite conductor and 
thus not 
 coherent. Indeed, if $S$ is archimedean and  coherent, then 
Corollary~\ref{divisorial N}
 implies that $N$ is a finitely generated ideal of $S$, which by Proposition~\ref{maximal ideal} implies that $N$ is a principal ideal. However,  Lemma~\ref{principal case} and Theorem~\ref{hull} then   imply that the Noetherian hull of $S$ is a fractional ideal of $S$, a contradiction to Theorem~\ref{cic}.}
\end{remark}

\begin{remark} {\em Following \cite[page~524]{Gil}, an integral domain  $A$ with field of fractions $K$ is a \emph{generalized Krull domain}  if 
there is a set $\mathcal{F}$ of rank $1$ valuation overrings of $A$  such that:
(i) $A = \bigcap_{V \in \mathcal{F}} V$; (ii) for  each $(V, {\ff M}_V) \in \mathcal{F}$, we have  $V = A_{{\ff M}_V \cap A}$; and (iii) 
$\mathcal{F}$ has finite character; that is, if $x \in K$ is nonzero, then $x$ is a nonunit in
only  finitely many valuation rings of $\mathcal{F}$. This class of rings has been studied by a number of authors; see for example \cite{Gri,Gri2,HO,PT,Pir,Rib}.
%
 In our setting,  when $S \subsetneq S^{*}$ the ring $S^{*}$ is  a generalized Krull domain whose defining family ${\mathcal F}$ consists of rank 1 valuation rings such that all but at most one member (namely, $W$) is a DVR.}


%
\end{remark}

In light of Theorem~\ref{cic arch}, which describes the complete integral closure $S^*$ of $S$ in the archimedean case, it remains to describe $S^*$ when $S$ is not archimedean. We do this in 
Theorem~\ref{cic}. 


\begin{theorem} \label{cic}
Assuming Setting~\ref{setting 2}, the following statements are equivalent. 

\begin{itemize}

\item[{\em (1)}] $S$ is not archimedean. 

\item[{\em (2)}] $T$ is a fractional ideal of $S$.

\item[{\em (3)}] $S^*=T$.

\item[{\em (4)}] The boundary valuation of $S$ has a nonzero nonmaximal prime ideal  that does not lie over $N$.

\item[{\em (5)}]  The ideal $\bigcap_{i>0}x^iS$ is a nonzero prime ideal of $S$.

\item[{\em (6)}] There  exists  $0 \ne y \in R$ such that for each $n>0$, $\ord_i(y) \geq n\cdot \ord_i(x)$ for all $i\gg 0$. 



\end{itemize}
\end{theorem}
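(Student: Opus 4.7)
The plan is to close the six equivalences by establishing the cycle $(1) \Rightarrow (2) \Rightarrow (3) \Rightarrow (1)$, together with the direct equivalences $(1) \Leftrightarrow (4)$, $(1) \Leftrightarrow (5)$, and $(1) \Leftrightarrow (6)$. The unifying observation I will exploit throughout is that, by Theorem~\ref{hull}(1), $T = S[1/x]$, so for any $y \in F$,
\[ y \in \bigcap_{n>0} x^n S \iff y/x^n \in S \text{ for all } n \geq 0 \iff yT \subseteq S, \]
and, using that $x \in T^{\times}$ and the decomposition $S = V \cap T$ from Theorem~\ref{flat}, the same condition is equivalent to $y \in T \cap \bigcap_{n>0} x^n V$. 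This identifies $\bigcap_{n>0} x^n S$ simultaneously with the conductor $(S :_F T)$ and with $T \cap \bigcap_{n>0} x^n V$, which will bridge all six statements.

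For $(1) \Rightarrow (2)$, I pick a nonzero $y \in \bigcap_{n>0} x^n S$; the displayed equivalence above gives $yT \subseteq S$, so $T$ is a fractional ideal of $S$. For $(2) \Rightarrow (3)$, the inclusion $T \subseteq S^{*}$ is immediate because every element of a fractional ideal that is a ring is almost integral. For the reverse inclusion, any $z \in S^{*}$ is almost integral over $T$ as well, and $T$ is a Krull domain (being a localization of the regular ring $R$) hence completely integrally closed, so $z \in T$. For $(3) \Rightarrow (1)$, I argue contrapositively: if $S$ is archimedean, then either $S$ is a DVR (so $S^{*} = S$ while $T$ equals the quotient field), or Theorem~\ref{cic arch} applies and $S^{*} = W \cap T$ is properly contained in $T$ because $1/x \in T$ lies outside $W$, since $x$ belongs to the maximal ideal $NV$ of $W$.

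For $(1) \Leftrightarrow (5)$, beyond the trivial direction, primality of $\bigcap_{n>0} x^n S = P \cap S$ follows from the standard valuation-ring fact that $P := \bigcap_{n>0} x^n V$ is the largest prime ideal of $V$ not containing $x$: a factorization $ab \in P \cap S$ with $a,b \in S$ lifts to $V$ where $P$ is prime. For $(1) \Leftrightarrow (4)$, I use $N = \sqrt{xS}$: a nonzero nonmaximal prime $\p$ of $V$ fails to lie over $N$ exactly when $x \notin \p$, and for a prime in a valuation ring this is equivalent to $\p \subseteq P$; thus the existence of such a $\p$ is equivalent to $P \ne 0$, which by the identity above is equivalent to $S$ being non-archimedean. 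For $(1) \Leftrightarrow (6)$, I rewrite the eventual inequality $\ord_i(y) \geq n\,\ord_i(x)$ as $y/x^n \in V_i$ eventually, so by Corollary~\ref{equation V cor} as $y/x^n \in V$; combined with $y \in R \subseteq T$ and $x \in T^{\times}$, this lands $y/x^n$ in $V \cap T = S$ for every $n$. The reverse direction extracts such a $y$ by multiplying any nonzero element of $\bigcap_{n>0} x^n S$ by a denominator in $R$.

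The main obstacle I foresee is the primality claim in $(1) \Rightarrow (5)$: since $S$ is generally not Noetherian and has no manageable ideal theory on its own, a direct argument on $S$ seems hopeless. Passing to the decomposition $S = V \cap T$ and transferring the question to $V$ is the essential move that converts primality of $\bigcap_{n>0} x^n S$ into the valuation-theoretic fact about the largest prime avoiding an element. Once this identification is in place, the remaining equivalences reduce to careful bookkeeping with the same decomposition and the fact that $x$ is a unit in $T$.
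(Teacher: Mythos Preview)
Your argument is correct and rests on the same ingredients as the paper's: the identity $T=S[1/x]$ from Theorem~\ref{hull}(1), the decomposition $S=V\cap T$ from Theorem~\ref{flat}, the valuation-theoretic fact that $\bigcap_{n}x^nV$ is the largest prime of $V$ avoiding $x$, and Corollary~\ref{equation V cor} to translate the condition in (6) into membership in $V$. Your opening displayed identity $\bigcap_{n>0}x^nS=(S:_FT)=T\cap\bigcap_{n>0}x^nV$ packages these ingredients nicely and makes the subsequent equivalences transparent.

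The organizational difference is that the paper runs a single cycle $(1)\Rightarrow(2)\Rightarrow(3)\Rightarrow(4)\Rightarrow(5)\Rightarrow(6)\Rightarrow(1)$, whereas you use a hub-and-spoke structure around $(1)$. The only substantive divergence is at the step away from $(3)$: the paper proceeds $(3)\Rightarrow(4)$ by noting that $S^*=T$ forces every rank~$1$ valuation overring of $S$ to contain $T$ and hence to be centered on a nonmaximal prime of $S$, which then transfers to $V$. You instead close $(3)\Rightarrow(1)$ contrapositively by invoking the archimedean description $S^*=W\cap T$ from Theorem~\ref{cic arch}. Your route is a bit less self-contained (it leans on the preceding theorem), but it is direct and perfectly valid given the placement of the statement; the paper's route keeps Theorem~\ref{cic} independent of Theorem~\ref{cic arch}.
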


\begin{proof} 
(1) $\Rightarrow$ (2) Since $S$ is not archimedean and $xS$ is $N$-primary, it follows that $\bigcap_{i>0}x^iS \ne 0$.  
By Theorem~\ref{hull}(1), $T = S[1/x]$, so that $0 \ne \bigcap_{i>0}x^iS = (S:_S T)$, and hence $T$ is a fractional ideal of $S$.   

(2) $\Rightarrow$ (3)  Since $T$ is a normal Noetherian domain, $T$ is completely integrally closed. Thus since $T$ is a fractional ideal of $S$, it follows that $T$ is the complete integral closure of $S$.  

(3) $\Rightarrow$ (4)  Since $T$ is the complete integral closure of $S$, $T$ is contained in every rank one valuation overring of $S$. 
For a  rank one valuation overring $U$ of $S$, it follows that $S \subsetneq T \subseteq U$.
Since the maximal ideal of $U$ lies over a nonzero prime ideal of $T$ and $N T = T$, we conclude that it lies over a nonzero nonmaximal prime ideal of $S$.
Statement (4) follows.

 
 (4) $\Rightarrow$ (5)  
 Since $\dim S  >1$, Theorem~\ref{hull}(1) implies that $T = S[1/x]$.  Let $V$ be  the  boundary 
 valuation ring for $S$.   By statement (3) there exists a nonzero prime ideal $Q$ of $V$ such that $Q \cap S$ is a nonmaximal prime ideal of $S$. Hence $x \not \in Q$  and  $Q \subseteq \bigcap_{i}x^iV$.  Since $V$ is a valuation ring, $\bigcap_{i \ge 1}x^iV$ is a nonzero prime ideal of $V$.   Using the fact that $x$ is a unit in $T$, 
 we have  $P = \bigcap_{i\ge 1}x^iV  \cap T = \bigcap_{i\ge 1} x^iS$ is a nonzero prime ideal of $S$.

(5) $\Rightarrow$ (6) 
Let $0 \ne y \in (\bigcap_{i}x^iS) \cap R$, and let $n>0$.  Then $y/x^n \in S$. Since $S = \bigcup_{i>0}R_i$, where $\{R_i\}$ is the sequence of quadratic transforms determined by $S$, it follows that $\ord_i(y) - n \cdot \ord_i(x) = \ord_i(y/x^n) \geq 0$ for all but finitely many $i$. 
%

(6) $\Rightarrow$ (1) Let $y$ be as in (6), and let $n>0$. Then (6) implies that for all but finitely many $i$, $y/x^n \in V_i$. Let $V$ be the boundary valuation ring for $S$. Since $V$ is a limit point for the $V_i$ in the patch topology, $y/x^n \in V$.  Since the choice of $n$ was arbitrary, we have $y \in \bigcap_{n>0}x^nV$, so that $V$ is not archimedean.
By Theorem~\ref{hull}(1), $T = S[1/x]$, and   by Theorem~\ref{flat}, $S = V \cap T$. 
Thus $y/x^n \in S$ for all $n>0$, and hence $y \in \bigcap_{n>0}x^nS$, which shows that  $S$ is not archimedean. 
\end{proof}

\begin{corollary} Assume Setting~\ref{setting 2}. Then there is a prime ideal $P$ of $S$ such that $S^* = (P:_F P)$. The ideal $P$ is maximal if and only if $S$ is archimedean. 
\end{corollary}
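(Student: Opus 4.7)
The plan is to split into the archimedean and non-archimedean cases and produce in each an explicit prime $P$ of $S$ with $S^* = (P :_F P)$ whose maximality tracks the case.

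In the archimedean case, I would take $P = N$. If $S$ is a DVR, $N$ is principal and trivially $(N :_F N) = S = S^*$; otherwise, Theorem~\ref{cic arch} gives $S^* = (N :_F N)$ directly. Either way, $P = N$ is maximal.

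In the non-archimedean case, Theorem~\ref{cic}(3) gives $S^* = T$ and Theorem~\ref{cic}(5) gives that $P := \bigcap_{i>0} x^i S$ is a nonzero prime of $S$. I would establish $(P :_F P) = T$ by two inclusions. For $T \subseteq (P :_F P)$: by Theorem~\ref{hull}(1) any $\theta \in T$ has the form $s/x^n$ with $s \in S$; for $p \in P$, the fact that $p \in x^{n+k} S$ for every $k \geq 0$ forces $\theta p \in x^k S$ for every $k$, so $\theta p \in P$. For $(P :_F P) \subseteq T$: if $\theta \in (P :_F P)$, then $S[\theta] \cdot P \subseteq P$, so choosing any $0 \ne p \in P$ yields $S[\theta] \subseteq p^{-1} S$, showing that $\theta$ is almost integral over $S$ and hence lies in $S^* = T$. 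Finally, $P$ is not maximal: $x \in N \setminus P$, since $x \in P$ would give $x \in x^2 S$ and hence make $x$ a unit in $S$, contradicting that $xS$ is a proper $N$-primary ideal.

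The two cases combined produce $P$ and show that the $P$ constructed is maximal exactly in the archimedean case. For the converse direction of the ``if and only if'', a maximal $P$ with $S^* = (P :_F P)$ must equal $N$; but when $S$ is non-archimedean we have $x^{-1} \in T = S^*$ while $x^{-1} \cdot x = 1 \notin N$ gives $x^{-1} \notin (N :_F N)$, ruling this out. The one substantive step is the almost-integrality argument showing $(P :_F P) \subseteq S^*$ in the non-archimedean case; everything else is an immediate consequence of Theorems~\ref{cic arch}, \ref{cic}, and \ref{hull}.
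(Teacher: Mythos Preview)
Your proof is correct and follows essentially the same outline as the paper's: split into the archimedean and non-archimedean cases, take $P = N$ in the first and $P = \bigcap_{i>0} x^i S$ in the second, and invoke Theorems~\ref{cic arch} and~\ref{cic} respectively.

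The only minor difference is in the non-archimedean case. The paper observes in one line that $P$ is actually an ideal of $T$ (since $T = S[1/x]$ and $P$ is stable under multiplication by $1/x$); then $T \subseteq (P:_F P)$ is immediate, and $(P:_F P) \subseteq T$ follows because $T$ is completely integrally closed and $P$ is a nonzero $T$-ideal. You instead verify $T \subseteq (P:_F P)$ by a direct elementwise computation and then argue $(P:_F P) \subseteq S^{*}$ via almost integrality over $S$ rather than over $T$. Both routes are valid and amount to the same underlying idea; the paper's phrasing is just slightly more compressed. Your explicit check that $x \in N \setminus P$ (hence $P$ is nonmaximal) and your extra verification that $N$ cannot serve as $P$ in the non-archimedean case are correct refinements that the paper leaves implicit.
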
 

\begin{proof} If $S$ is archimedean, then by Theorem~\ref{cic arch}, $S^*=(N:_FN)$. If $S$ is not archimedean, then  by Theorem~\ref{cic}, $P := \bigcap_{n>0}x^nS$ is a nonmaximal prime ideal of $S$. Also by Theorem~\ref{cic}, $S^* = T$. Since $T = S[1/x]$, it follows that $S^* = T \subseteq (P:_FP)$.  Moreover, since $T$ is completely integrally closed and $P$ is an ideal of $T$, we have $S^* = T = (P:_FP)$.
\end{proof}

\section{Shannon's examples}

Two examples \cite[Examples 4.7 and 4.17]{Sha} of David Shannon   motivated our work in this paper.   In Examples~\ref{Shannon example 4.7} and~\ref{Shannon example} we present details of these examples and their relation to concepts 
developed in this paper.   
The first, Example~\ref{Shannon example 4.7}, involves a nonarchimedean Shannon extension that is not a valuation ring, while the second, Example~\ref{Shannon example}, deals with an archimedean Shannon extension that is not a valuation ring.  In this section we make use
of the following elementary lemma.

\begin{lemma}\label{principal maximal lemma} {\em (cf.~\cite[Theorem 2.4]{Fon} and   \cite[Exercise 1.5, p.~7]{Kap})}
Let $A$ be a local domain with principal maximal ideal $\m = x A$ and let $\p = \bigcap_{n \ge 0} \m^n A$.
\begin{itemize}
\item[{\em (1)}]  $\p = x\p$ is a prime ideal, and every prime ideal properly contained in $\m$ is contained in $\p$.
\item[{\em (2)}]  $A$ is a valuation domain if and only if $A_{\p}$ is a valuation domain.
\end{itemize}
\end{lemma}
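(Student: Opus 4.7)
The plan is to organize both parts around one structural observation: every element of $A \setminus \p$ has the form $x^n u$ with $n \ge 0$ and $u \in A^\times$. Indeed, if $a \notin \p = \bigcap_n \m^n$, there is a largest $n$ with $a \in \m^n = x^n A$; writing $a = x^n b$, having $b \in \m$ would force $a \in \m^{n+1}$, contradicting maximality, so $b$ must be a unit. From this, the multiplicative closedness of $A \setminus \p$ is immediate, and moreover inverting this set amounts to inverting the single element $x$, giving $A_\p = A[1/x]$. This identification is the lever that powers part (2).

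For part (1), the multiplicative closedness of $A \setminus \p$ already gives that $\p$ is prime. To prove $\p = x\p$, I would pick $a \in \p$; since $a \in \m$ we can write $a = xb$, and since $a \in \m^{n+1} = x^{n+1} A$ for each $n$ we can also write $a = x^{n+1} c_n$, so cancelling $x$ in the domain $A$ gives $b = x^n c_n \in \m^n$ for every $n$, hence $b \in \p$. For the last clause, if $\q \subsetneq \m$ is prime, then $x \notin \q$ (otherwise $\m = xA \subseteq \q$), and iteratively extracting an $x$ from any $a \in \q$ using primality of $\q$ shows $a \in x^n A$ for all $n$, so $a \in \p$.

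For part (2), the forward direction is standard: $A_\p$ is an overring of $A$ in its quotient field, and overrings of valuation domains inside their quotient field are valuation domains. The backward direction uses $A_\p = A[1/x]$ decisively. Given $q$ in the quotient field, after possibly swapping $q$ with $q^{-1}$ we may assume $q \in A[1/x]$, so $q = a/x^k$ for some $a \in A$ and $k \ge 0$. If $a \in x^k A$, then $q \in A$; otherwise $a \notin \m^k$, so in particular $a \notin \p \subseteq \m^k$, and by the structural observation $a = x^i u$ with $i < k$ and $u \in A^\times$, yielding $q^{-1} = x^{k-i} u^{-1} \in A$. Thus $q$ or $q^{-1}$ lies in $A$, so $A$ is a valuation domain.

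The only genuine content is the structural observation at the outset; from it, both parts unfold by direct manipulation. I expect no real obstacle, but the step most worth being explicit about is the identification $A_\p = A[1/x]$, since it is precisely what converts the valuation property of the localization into one for $A$ itself.
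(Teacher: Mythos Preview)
Your proof is correct. The paper does not actually supply a proof of this lemma; it is stated with the citations to \cite[Theorem~2.4]{Fon} and \cite[Exercise~1.5, p.~7]{Kap} and then used without further argument. Your approach---reducing everything to the observation that $A\setminus\p=\{x^n u : n\ge 0,\ u\in A^\times\}$, which yields both the primality of $\p$ and the identification $A_\p=A[1/x]$---is the natural direct route and is essentially what underlies the cited exercise in Kaplansky for part~(1). Your handling of~(2) via $A_\p=A[1/x]$ is clean; the paper's later applications of the lemma (in Example~\ref{Shannon example 4.7} and Theorem~\ref{cic one gen}) use exactly the facts you establish.
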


Example~\ref{Shannon example 4.7}   is based on \cite[Example 4.7]{Sha} of Shannon.  


\begin{example} {\em   \label{Shannon example 4.7}   Let $(R, \m)$ be a 3-dimensional regular local ring with 
$\m = (x,y,z)R$. Let $U$ be a  valuation ring  that birationally dominates $R$ such that, 
with $u$ the valuation of $U$,  we have 
 $nu(x) < u(y)$  and $nu(x) < u(z)$ for each positive integer $n$,  
 that is,  $u(x)$    
  is infinitely smaller than both $u(y)$ and $u(z)$.    Let $\{R_i\}_{i=0}^\infty$ be the sequence 
  of local quadratic transforms of $R$ along $U$.  The maximal ideal $\m_i$  of $R_i$ is 
  $\m_i = (x,  \frac{y}{x^i},  \frac{z}{x^i})R_i$.  For each $i$,   we have $z/y \not \in R_i$ and $y/z \not \in R_i$. 
  Hence  $S = \bigcup_{i}R_i$ is not a valuation ring.  
 
 Since $y,z \in \bigcap_{i}x^iS$,   the ring $S$ is not archimedean.  By Theorem~\ref{cic}, the complete integral closure $S^*$ of $S$ is the Noetherian hull $T = S[1/x]$ of $S$.  
 Observe that  $xS$ is the maximal ideal of $S$, and so by Lemma~\ref{principal maximal lemma}, $P = \bigcap_{i>0}x^iS$ is the unique largest nonmaximal  prime ideal of $S$.  It follows that $T = S[1/x] = S_P$.  Since 
 $(y, z)R \subseteq P \cap R \subsetneq \m$, and there are no prime ideals strictly between $(y, z)R$ and $\m$,
 we conclude that $T = R_{(y,z)R}$.

Since $S$ has principal maximal ideal $xS$,  we have $P = PS_P$ as sets.  Hence 
there are no rings properly between $S$ and $T=S[1/x]$.   Since $U$ dominates 
$S$,  we have  $T \not \subseteq U$.  Therefore $S = U \cap T$.  
However, $U$ need not be the boundary valuation ring of $S$.  The boundary valuation ring is unique, 
and  there are many possibilities for $U$;   all we require of $U$ is that  $U$ birationally dominates $R$ and 
 its valuation  $u$  has the property  that $u(x)$ is infinitely smaller than both $u(y)$ and $u(z)$.}
 \end{example}


%

Example~\ref{Shannon example}   is based on \cite[Example 4.17]{Sha} of Shannon.
The calculation of the complete integral closure of the archimedean Shannon extension $S$ in this  example relies on the following theorem, which gives a criterion for  the complete integral closure of $S$ to be a simple 
ring extension of $S$.

\begin{theorem}\label{cic one gen}
Assume notation as in Theorem~\ref{cic arch}.
If there exists $\theta \in S^* \setminus S$  such that $S [\theta]_{N} = W$,   then
\begin{itemize}
\item[{\em (1)}] $S^* = S [\theta]$,
\item[{\em (2)}]  $\theta^{-1} S [\theta^{-1}]$ is a maximal ideal of $S [\theta^{-1}]$, and 
\item[{\em (3)}]  $V = S [\theta^{-1}]_{\theta^{-1} S [\theta^{-1}]}$.
\end{itemize}
\end{theorem}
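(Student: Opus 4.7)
My plan is to prove (1), (2), and (3) in order, leveraging the decomposition $S^* = W \cap T$ from Theorem~\ref{cic arch}. I will begin with three preliminary observations. First, since $\theta \notin S$, Corollary~\ref{same units} shows $\theta$ is not a unit of $S^*$. Second, by Corollary~\ref{cic N} the center of $W$ on $S^*$ is $N$, so since $\theta \notin N$ we have $\theta \in W^{\times}$ and hence $\theta^{-1} \in W$. Third, $\theta \in S^* \subseteq T$ together with $\theta \notin S = V \cap T$ (Theorem~\ref{flat}) forces $\theta \notin V$, so $\theta^{-1}$ lies in the maximal ideal $M_V$ of $V$.

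For (1), the inclusion $S[\theta] \subseteq S^*$ is immediate, and for the reverse I exploit that $S \setminus N$ consists of units of $S$: any element of $W = S[\theta]_N$ has the form $f/s$ with $f \in S[\theta]$ and $s \in S \setminus N$, so $s^{-1} \in S \subseteq S[\theta]$ forces $W \subseteq S[\theta]$. Combining with $S[\theta] \subseteq S^* \subseteq W$ yields $S^* = S[\theta]$. For (2), the heart of the argument is that $N \subseteq \theta^{-1} S[\theta^{-1}]$: from $S^* = (N :_F N)$ (Theorem~\ref{cic arch}) we get $\theta N \subseteq N$, so for $n \in N$ we have $n = \theta^{-1}(\theta n) \in \theta^{-1} S[\theta^{-1}]$. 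Conversely, if $s \in S$ and $s = \theta^{-1} g$ with $g \in S[\theta^{-1}] \subseteq V$, then $s \theta = g \in V$; since $\theta \notin V$ the valuation $v$ associated to $V$ satisfies $v(\theta) < 0$, forcing $v(s) > 0$ and $s \in N$. Because every element of $S[\theta^{-1}]$ is a sum of an element of $S$ and one of $\theta^{-1} S[\theta^{-1}]$, these two facts combine to give an isomorphism $S[\theta^{-1}]/\theta^{-1} S[\theta^{-1}] \cong S/N$, a field; hence $\theta^{-1} S[\theta^{-1}]$ is maximal.

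For (3), set $A := S[\theta^{-1}]_{\theta^{-1} S[\theta^{-1}]}$. The argument in (2) also shows $\theta^{-1} S[\theta^{-1}] = M_V \cap S[\theta^{-1}]$, so $A \subseteq V$ and $M_V \cap A = M_A$, where $M_A$ is the maximal ideal of $A$. The equality $V = A$ will then follow from the standard fact that a proper containment $A \subsetneq V$ of valuation rings of $F$ must have $M_V \cap A \subsetneq M_A$. The main obstacle is therefore to verify that $A$ is itself a valuation ring, which I plan to do by applying Lemma~\ref{principal maximal lemma}(2) to $A$ with its principal maximal ideal $\theta^{-1} A$. Setting $\mathfrak{p} := \bigcap_{n \geq 1} \theta^{-n} A$, Lemma~\ref{principal maximal lemma}(1) gives that $\mathfrak{p}$ is prime, and I will argue $\mathfrak{p} \subsetneq \theta^{-1} A$: otherwise $\theta^{-1} \in \theta^{-n} A$ for all $n$, yielding $\theta^{n-1} \in A \subseteq V$ for every $n$, contradicting $\theta \notin V$. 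Hence $\theta^{-1} \notin \mathfrak{p}$, so $A_{\mathfrak{p}}$ inverts $\theta^{-1}$ and therefore contains $S[\theta] = W$ by (1). Since $W$ is a rank one valuation ring of $F$, its only overrings in $F$ are $W$ and $F$, so $A_{\mathfrak{p}}$ must be a valuation ring. Lemma~\ref{principal maximal lemma}(2) then yields that $A$ is a valuation ring, completing the proof.
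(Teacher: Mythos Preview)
Your argument for (1) rests on a misreading of the notation $S[\theta]_N$. By Seidenberg's Lemma (since $\theta,\theta^{-1}\notin S$), the ideal $N$ remains prime in $S[\theta]$, and $S[\theta]_N$ denotes the localization of $S[\theta]$ at this prime, i.e.\ inverting $S[\theta]\setminus N$, not merely $S\setminus N$. Under your reading---inverting only $S\setminus N$, which consists of units---one would get $S[\theta]_N=S[\theta]$, hence $S[\theta]=W$ and therefore $S^*=W$. But this is false in general: $S^*=W\cap T$ with $T$ a regular Noetherian domain, and in Example~\ref{Shannon example} one has $\dim S[\theta]=2$ while $W$ has rank~$1$. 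The paper's proof of (1) instead works modulo $N$: Seidenberg gives $S[\theta]/N\cong k[\bar\theta]$, the hypothesis yields $W/NW\cong k(\bar\theta)$, and then Corollary~\ref{same units} pins down $S^*/N$ between these as exactly $k[\bar\theta]$; Corollary~\ref{cic subrings} finishes.

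The same error propagates into your proof of (3). You correctly obtain $A_{\mathfrak p}\supseteq S[\theta]$, but then invoke ``$S[\theta]=W$ by (1)'' to conclude $A_{\mathfrak p}\supseteq W$; with the correct (1) you only know $A_{\mathfrak p}\supseteq S^*$, which is strictly smaller than $W$. What is missing is the identification $\mathfrak p=NA$. The paper obtains this by showing (again via Seidenberg) that $NA$ is a prime ideal of $S[\theta^{-1}]$ lying just below the principal maximal ideal $\theta^{-1}A$, so that in the localization $A$ the prime $NA$ is the unique prime just below the maximal ideal and hence equals $\bigcap_n\theta^{-n}A=\mathfrak p$ by Lemma~\ref{principal maximal lemma}(1). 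Once $\mathfrak p=NA$, one gets $A_{\mathfrak p}\supseteq S[\theta]_N=W$ directly from the hypothesis, and your application of Lemma~\ref{principal maximal lemma}(2) then goes through.

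Your proof of (2) is correct and is a clean alternative to the paper's argument: rather than invoking Seidenberg's Lemma and the polynomial-ring description of $S[\theta^{-1}]/N$, you compute $S\cap\theta^{-1}S[\theta^{-1}]=N$ directly using the boundary valuation. This is arguably more transparent for (2) alone, though the Seidenberg route has the advantage of simultaneously producing the prime $NA$ needed in (3).
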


\begin{proof}
To show that $S [\theta] = S^*$, it suffices by Corollary~\ref{cic subrings} to show $S [\theta] / N = S^* / N$.
Let $k = S / N$ and let $\overline{(-)}$ denote image modulo $N$.
Since $\theta \not \in S$, Corollary~\ref{same units} implies $\theta^{-1} \notin S$, so by Seidenberg's Lemma \cite[Theorem 7]{Sei} it follows that $S [\theta] / N = k [\overline{\theta}]$ is a polynomial ring in one variable over the field $k$.
Thus from $S [\theta]_{N} = W$, it follows by permutability of localization and residue class formation that $W / N W = k (\overline{\theta})$ is a simple transcendental field extension.
Thus $k [\overline{\theta}] \subseteq S^* / N \subseteq k (\overline{\theta})$, so $S^* / N$ is a localization of $k [\overline{\theta}]$.
By Corollary~\ref{same units}, the units of $S^* / N$ are the units of $k$, so we conclude that $S^* / N = k [\overline{\theta}]$.


Let $A = S [\theta^{-1}]$, so $A \subseteq V$.
Then $N A$ is a prime ideal of $A$, $A_{N A} = V_{N V}$, and again by Seidenberg's Lemma, $A / N A = k [\overline{\theta^{-1}}]$ is a polynomial ring in one variable over the field $k$.
Now by Theorem~\ref{cic arch}, $\theta \in (N:N)$. Thus $ N A \subseteq \theta^{-1}A$, so that  
 $(N, \theta^{-1}) A = \theta^{-1} A$ is a principal maximal ideal of $A$.
Moreover, since $A / N A \cong k [\theta^{-1}]$,  $N A$ is a prime ideal of $A$ just below $\theta^{-1} A$.

Let $\tilde{V} = A_{\theta^{-1} A}$, so $\tilde{V}$ is a local domain with principal maximal ideal.
Since $\theta \in S^{*} \setminus S = (W \setminus V) \cap T$, it follows that $\theta^{-1} \in {\ff M}_V$, so $V$ birationally dominates $\tilde{V}$.
Since $\tilde{V}$ is a local domain with principal maximal ideal $\theta^{-1}\tilde{V}$, it follows from Lemma~\ref{principal maximal lemma}(1) that 
 $N \tilde{V}$ is the unique prime ideal just below $\theta^{-1} \tilde{V}$
 and that $N\tilde{V} = N\tilde{V}_{N\tilde{V}}$.
Furthermore, $\tilde{V} [\theta] = \tilde{V}_{N \tilde{V}} = W$, so that $\tilde{V}_{N \tilde{V}}$ is a valuation ring. By Lemma~\ref{principal maximal lemma}(2), $\tilde{V}$ is valuation domain. 
Since $\tilde{V}$ is a valuation domain birationally dominated by $V$, we have $\tilde{V} = V$.
\end{proof}

\begin{ex} \label{Shannon example} {\em   Let $(R, \m)$ be a 3-dimensional regular local ring with 
$\m = (x,y,z)R$.  Let $u$ be a valuation of the quotient field of $R$ with the property that $u(x) = a$,
$u(y) = b$, $u(z) = c$ are rationally independent positive real numbers such that $c > a + b$.  
Let $\{(R_n, \m_n)\}_{n \ge 0}$ be the sequence of local quadratic transforms of $R = R_0$ along the
valuation ring  determined by $u$ and let $S = \bigcup_{n \ge 0}R_n$.   Shannon proves that $S$ 
is not a valuation ring. Indeed,  for each integer $i \ge 0$,   Shannon  proves that $\m_i = (x_i, y_i, z_i)R$,  where
$a_i = u(x_i), b_i = u(y_i),  c_i = u(z_i)$  are distinct rationally independent real numbers and 
$c_i \neq \min\{a_i, b_i, c_i\}$.  Thus the local quadratic transform from $R_n$ to $R_{n+1}$ is obtained
either
\begin{enumerate}
\item 
 by dividing by $x_n$,  in which case $x_{n+1} = x_n$,   $y_{n+1} = \frac{y_n}{x_n}$ and 
 $z_{n+1} = \frac{z_n}{x_n}$, or
 \item 
 by dividing by $y_n$,  in which case $x_{n+1} =  \frac{x_n}{y_n}$,   $y_{n+1} = y_n$ and 
 $z_{n+1} = \frac{z_n}{y_n}$.
 \end{enumerate}
 The valuation $u$ defines a rank one valuation domain that birationally dominates $S$. 
 By varying the value of the real number $u(z) = c$,  subject only to the condition that $c > a + b$,  we conclude that 
 there exist infinitely many rank one valuation domains that birationally dominate $S$.  
 
For each $i$, the elements $x_i$ and $y_i$ each generate an $N$-primary ideal of $S$.
Consider the element $\theta = \frac{z}{xy} = \frac{z_i}{x_i y_i}$.
We show that $\theta \in S^* \setminus S$ and that $S^{*} = S [\theta]$.

Let $T$ be the Noetherian hull of $S$, and let $V$ be the boundary valuation ring for $S$.   
Since $x, y$ are units in $T$, $\theta \in T$.
For each $i \ge 0$, it follows that $\ord_{i} (\theta) = \ord_{i} (\frac{z_i}{x_i y_i}) = -1$.
Thus $\theta \notin V$, so $\theta \notin S$ and $\theta^{-1} \in V$.
By Proposition~\ref{maximal ideal}, for each element $f \in N$, it follows that $\lim_{n \rightarrow \infty} \ord_n (f) = \infty$.
Thus $\ord_n (\theta f) > 0$ for $n \gg 0$, so $\theta f \in {\ff M}_V \cap T = N$.
Therefore $\theta N \subseteq N$, so $\theta \in (N :_{F} N) = S^{*}$.

 
 


Denote $A = S [\theta]$. Since $\theta, \theta^{-1} \not \in S$, Seidenberg's Lemma \cite[Theorem 7]{Sei} implies $N = N A$ is a prime ideal of $A$ and  $A / N \cong k [\overline{\theta}]$ is a polynomial ring in one variable over $k = S / N$. In particular, $N$ is a nonmaximal prime ideal of $A$.  Therefore, since by Corollary~\ref{d-1} there are no rank 3 valuation rings between $A$ and its field of fractions, $\dim A =2$ and  $\dim A_N = 1$.  
Moreover, 
since $A \subseteq S^*$ and, by Corollary~\ref{cic N}, $N$ is the center of $W$ on $S^*$, $N $ is the center of $W$ on $A$.
Furthermore, since $R_i [\theta]$ is integrally closed for each $i \ge 0$, it follows that $A$ is integrally closed.

We show that $A_{N} = W$.
The ring $A_{N}$ is an integrally closed dimension $1$ local domain that birationally dominates $R$ and has residual transcendence degree $1$ over $S$.  The valuation ring $U$ has rational rank three and hence is residually algebraic over $R$ \cite[Theorem 1]{Abh}, so it follows that $S$ is residually algebraic over $R$ also. Therefore, $A_N$ has residual transcendence degree $1$ over $R$.  
If $A_{N}$ is not a valuation domain, then it is birationally dominated by a valuation domain $B$ that has positive residual transcendence degree over $A_{N}$ \cite[Theorem~10, p.~19]{ZS}. Therefore, since $R$ has dimension 3 and $A_N$ has residual transcendence degree $1$ over $R$, it must be that $B$ has residual transcendence degree $2$ over $R$; cf.~\cite[Theorem~1]{Abh}.  
This implies $B$ is a prime divisor of $R$ that dominates $S$. 
However, a Shannon extension of $R$ that is birationally dominated by a prime divisor of $R$ is necessarily equal to $R_i$ for one of the local quadratic transforms along $S$ \cite[Proposition 4]{Abh}, so we obtain a contradiction to the fact that in our case $\{R_i\}$ is an infinite sequence.
Thus $A_N$ is a valuation domain that is birationally dominated by $W$, which forces $S[\theta]_N = A_{N} = W$.
Thus  by Theorem~\ref{cic one gen}, we have $S^* = S [\theta]$ and $S [\theta^{-1}]_{\theta^{-1} S [\theta^{-1}]} = V$

 Finally, we note that the rank 1 valuation ring $U$ of $u$ along which $S$ was defined is not the rank 1 valuation overring $W$ of the boundary valuation ring $V$ of $S$, simply because $U$ has rational rank 3 and no such valuation overring of a 3-dimensional regular local ring can properly contain a valuation ring that contains $R$ (cf.~\cite[Theorem 1]{Abh}.) }

\end{ex} 

\begin{remark}  
{\em Example~\ref{Shannon example} exhibits an archimedean Shannon extension that is neither completely integrally closed nor a valuation ring, and whose boundary valuation ring has rank $2$. 
We prove in \cite{HLOST} that  there exist 
examples of archimedian  Shannon extensions $S$ that are completely integrally closed and whose boundary valuation $V$ has rank 1 yet $S$ is not a valuation ring; i.e., $S \subsetneq V$.  }
\end{remark}

\section{When a Shannon extension is a valuation ring}

If $R$ is a regular local ring of dimension $2$, then a valuation ring $V$ of $R$ that birationally dominates $R$ is the union of  the sequence of local quadratic transforms of $R$ along $V$ \cite[Lemma 12]{Abh}. Moreover  $V$ 
 is  either zero-dimensional\footnote{The {\it dimension} of a valuation ring $V$ that birationally dominates $R$ is the transcendence degree of the residue field of $V$ over the residue field of $R$; cf.~\cite[p.~34]{ZS}.}
 or a prime divisor of $R$ \cite[Theorem 1]{Abh}. Since 
 the sequence of local quadratic transforms along a prime divisor is finite \cite[Proposition 4]{Abh}, it follows that the Shannon extensions of the two-dimensional regular local ring $R$ are  precisely the zero-dimensional valuation rings that birationally dominate $R$.  
  In higher dimensions, while a Shannon extension need not be a valuation ring, and a zero-dimensional valuation overring need not be a Shannon extension, much is known about when these extensions are valuation rings; cf.~\cite{Gra, GMR, GMR2,GR, Gra2, HKT2, Sha}. 
In this section we give additional characterizations of when a Shannon extension is a valuation ring and recover some of the  previously known characterizations from a different point of view.

\begin{theorem}  \label{valuation criterion} The following are equivalent for a Shannon extension $S$ of a regular local ring $R$.

\begin{itemize}

\item[{\em (1)}] $S$ is a valuation ring.


\item[{\em (2)}] $S$ has only finitely many height 1 prime ideals.


\item[{\em (3)}] Either {\em (a)} $\dim S = 1$ or {\em (b)} $\dim S = 2$ and the boundary valuation ring $V$ of  $S$ has  value group  ${\mathbb{Z}} \oplus G $, where $G$ is a subgroup of ${\mathbb{Q}}$ and the direct sum is ordered lexicographically. 


\end{itemize}

\end{theorem}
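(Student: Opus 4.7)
The plan is to establish the three equivalences cyclically as (1)$\Rightarrow$(2)$\Rightarrow$(3)$\Rightarrow$(1). The first implication, (1)$\Rightarrow$(2), is immediate, since a valuation ring has at most one prime ideal of any given height.

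For (2)$\Rightarrow$(3), the first step is to apply Theorem~\ref{hull}(1): write $T = S[1/x]$ for any $N$-primary element $x \in S$, and recall $T$ is a localization of $R_n$ for $n$ large. Since $\sqrt{xS}=N$, the primes of $T$ correspond bijectively to the non-maximal primes of $S$, so the height $1$ primes of $T$ and $S$ are in bijection and $T$ has finitely many height $1$ primes. Because $T$ is a localization of the regular local ring $R_n$ of dimension $\ge 2$, and because every height $\ge 2$ prime of such a ring contains infinitely many height $1$ primes, the localization $T$ must have Krull dimension at most $1$; hence $\dim S \le 2$. The case $\dim S = 1$ yields (3)(a). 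For $\dim S = 2$, a further structural argument is needed to force $S$ to have exactly one height $1$ prime, so that $T$ is a DVR. Then the boundary valuation $V$ has rank $2$ with $V_P = T$, and the exact sequence of value groups $0 \to H \to \Gamma \to \mathbb Z \to 0$ splits (since $\mathbb Z$ is free), yielding $\Gamma = \mathbb Z \oplus G$ in the lex order with $G = H$. The constraint $G \subseteq \mathbb Q$ I would derive from an Abhyankar-type rational rank bound applied to the residual valuation $V/P$ dominating the regular local ring $R_n/(P \cap R_n)$.

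For (3)$\Rightarrow$(1), in case (a) the hypothesis $\dim S = 1$ forces $T = S[1/x] = F$ (every nonzero prime of $S$ contains $x$) by Theorem~\ref{hull}(1), so $S = V \cap T = V$ by Theorem~\ref{flat}, which is a valuation ring. In case (b), $V$ has rank $2$ with $W = V_P$ a DVR of value group $\mathbb Z$. For any $N$-primary $x \in S$, the value $v(x)$ lies in the bottom convex subgroup $H = \{0\}\oplus G$ of $\Gamma$ (using $\sqrt{xV} = N_V$, which follows from $\sqrt{xS} = N$), so one computes $V[1/x] = W$, whence $T = S[1/x] \subseteq V[1/x] = W$. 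Combining the structural theory of Shannon extensions with the value group assumption on $V$, I then argue $T = W$; consequently $S = V \cap T = V \cap W = V$ is a valuation ring.

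The main obstacle is the combinatorial question underlying both directions: namely, that a Shannon extension cannot have exactly finitely many but more than one height $1$ prime (equivalently, that $T$ cannot be a semi-local PID with more than one maximal ideal). Resolving this requires tracking how height $1$ primes of $R_n$ propagate through the local quadratic transform sequence, via the transform operation of Lemma~\ref{Lipman}(3) and the ideals $\mathfrak I_k$ of Notation~\ref{ideal notation}. A secondary delicate step is establishing the rational rank bound $G \subseteq \mathbb Q$ in (3)(b), which likely uses asymptotic constraints from the defining sequence $\{R_i\}$ of local quadratic transforms.
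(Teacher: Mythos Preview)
Your cyclic scheme runs into the very obstacle you flag at the end, and this obstacle is not peripheral but fatal to the argument as written. Both your $(2)\Rightarrow(3)$ and your $(3)\Rightarrow(1)$ in case~(b) implicitly require that $S$ have \emph{exactly one} height~$1$ prime when $\dim S=2$. In $(2)\Rightarrow(3)$ you say so explicitly. In $(3)\Rightarrow(1)$ the claim $T=W$ is equivalent to it: $T$ is the intersection of the $S_Q$ over all height~$1$ primes $Q$ of $S$, and $W=V_P=S_{P\cap S}$ is just one of these, so $T=W$ forces $P\cap S$ to be the unique height~$1$ prime. You give no mechanism for proving this uniqueness, and the transform machinery of Lemma~\ref{Lipman} and Notation~\ref{ideal notation} does not obviously yield it.

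The paper sidesteps this entirely by \emph{not} arguing cyclically. It proves $(2)\Rightarrow(1)$ directly: from finitely many height~$1$ primes one first deduces (as you do) that every nonmaximal prime has height~$1$, so $T=S[1/x]$ is the intersection of the finitely many DVRs $S_P$; then $S=V\cap T$ exhibits $S$ as a \emph{local} domain equal to an intersection of finitely many valuation rings, and such a ring is always a valuation ring \cite[(11.11)]{Nag}. This is the missing idea. Once $(1)\Leftrightarrow(2)$ is in hand, $(1)\Rightarrow(3)$ is easy because $T$, as an overring of the valuation ring $S$, is itself a valuation ring, hence a DVR or the quotient field, forcing $\dim S\le 2$; the constraint $G\subseteq\mathbb Q$ is then quoted from Granja \cite[Proposition~14]{Gra}. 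For $(3)\Rightarrow(1)$ in case~(b), the paper does not attempt to show $T=W$; instead it shows $S_{P\cap S}=V_P$ and invokes an irredundancy lemma \cite[Lemma~3.1]{OIrr} (using that $V/P$ has rational value group and $V$ is irredundant in $S=V\cap T$) to conclude that $V$ is a localization of $S$, whence $V=S$.
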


\begin{proof} 
%
We use in the proof that by Theorem~\ref{flat} we have $S = V \cap T$, where $V$ is the boundary valuation ring of $S$ and $T$ is the Noetherian hull of $S$.  In particular, 
there is  $x \in S$ such that $xS$ is primary for the maximal ideal $N$ of $S$ and $T = S[1/x]$. 

(1) $\Leftrightarrow$ (2)  It is clear that (1) implies (2) since the ideals of a valuation ring are totally ordered by inclusion. Conversely, suppose that $S$ has only finitely many height 1 prime ideals. 
  If $P$ is a nonmaximal prime ideal of $S$ such that $P$ has height $>1$,  then since $S_P$ is a localization of the Noetherian ring $T$, there exist infinitely many height 1 prime ideals of $S$ that are contained in $P$, contrary to (2). Therefore, every nonmaximal prime ideal $P$ of $S$ has height 1, and $T =S[1/x]$ is the intersection of the rings $S_P$, where $P$ ranges over the height 1 prime ideals of $S$. By assumption there are only finitely many such prime ideals $P$. Moreover,   since $T$ is an integrally closed Noetherian domain, $S_P$ is a DVR for each each height 1 prime ideal $P$ of $S$. Therefore, since $S = V \cap T$,  $S$ is an intersection of $V$ and finitely many DVRs. Since $S$ is local, this implies that $S$ is a valuation domain \cite[(11.11)]{Nag}.

(1) $\Rightarrow$ (3) 
 Suppose that $S$ is a valuation ring. If $S$ is a DVR, the claim is clear, so suppose that $S$ is not a DVR. 
 As an overring of the valuation ring $S$, $T$ is a valuation ring, and hence the Noetherian ring $T$ is either a DVR or the quotient field of $S$. If $\dim S > 1$, then necessarily $T$ is a DVR, and since every nonmaximal prime ideal of $S$ survives in $T=S[1/x]$, this forces $\dim S  = 2$.  Furthermore, by \cite[Proposition 14]{Gra}, $S/P$ has value group isomorphic to a subgroup of ${\mathbb{Q}}$.  

(3) $\Rightarrow$ (1)  If $\dim S  = 1$, then $T=S[1/x]$ is the quotient field of $S$, so that $S = V \cap T = V$, and hence $S$ is a valuation ring. Suppose that $\dim S  = 2$ and the value group of $V$ is ${\mathbb{Z}} \oplus G$, where $G$ is a subgroup of ${\mathbb{Q}}$.  If the nonzero nonmaximal prime ideal $P$ of $V$ lies over the maximal ideal $N$ of $S$, then $S$ is dominated by a DVR (namely, the localization of $V$ at $P$), but then by Corollary~\ref{DVR case}, $S$ is a DVR, contrary to $\dim S = 2$. Thus $P \cap S$ is a height 1 prime ideal of $S$ and hence $S_{P \cap S}$ is a localization of $T=S[1/x]$. Since by Theorem~\ref{hull}, $T$ is a localization of some $R_i$, it follows that $S_{P \cap S}$ is a localization of $R_i$ at a height 1 prime. In particular, $S_{P \cap S} $ is a DVR, which forces $S_{P \cap S} = V_P$.  Since $V \subseteq S_{P \cap S}$ and $V/P$ has rational value group with $V$  irredundant in the intersection $S = V \cap T$, it follows that $V$ is a localization of $S$ \cite[Lemma 3.1]{OIrr}. Since $V$ dominates $S$, this forces $S = V$, which verifies (1).  
\end{proof} 

\begin{remark}
{\em If the Shannon extension $S$ of $R$ is a valuation ring with $\dim S  = 2$, then by Theorem~\ref{valuation criterion}, the value group of $S$ has rational rank $2$.  
There is no such bound on the rational rank of a valuation ring obtainable as a Shannon extension $S$ when $\dim S = 1$.
 Granja \cite[Proposition 16]{Gra} has shown that if $R$ is a regular local  ring of dimension $d \geq 2$, then there exists a Shannon extension of $R$ that is a valuation ring and whose corresponding valuation has rational rank $d$.  }
\end{remark}

\begin{corollary} \label{dim 2 principal}  Let $S$ be a  Shannon extension of the regular local ring $R$. Then $S$ is valuation domain with discrete value group if and only if 
 $\dim S \leq 2$ and $S$ has a principal maximal ideal. \end{corollary}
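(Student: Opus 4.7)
The plan is to treat the two implications separately, with the substance of the argument lying in the backward direction when $\dim S = 2$. For the forward direction, suppose $S$ is a valuation domain with discrete value group, meaning an ordered group isomorphic to $\mathbb{Z}^n$ under the lexicographic order. Theorem~\ref{valuation criterion} at once gives $\dim S \le 2$. A discrete value group has a smallest positive element $\gamma$, and any $x \in S$ with valuation $\gamma$ generates the maximal ideal $N$, so $N$ is principal.

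For the backward direction, suppose $\dim S \le 2$ and $N = xS$ is principal. If $\dim S = 0$, then $S$ is a field. If $\dim S = 1$, then Lemma~\ref{principal case} identifies $P := \bigcap_{n>0} N^n$ as a prime ideal strictly contained in $N$ with $S/P$ a DVR; since $\dim S = 1$ forces $P = 0$, we conclude that $S$ itself is a DVR.

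Suppose now $\dim S = 2$. Lemma~\ref{principal case} gives that $P = \bigcap_{n>0} x^n S$ is a prime of $S$ with $S/P$ a DVR (whose maximal ideal is the image of $xS$) and with the equality $P = PS_P$. Since $\dim S = 2$, the containment $P \subsetneq N$ is strict, so $P$ is a nonzero nonmaximal prime of the two-dimensional ring $S$ and therefore has height $1$. By Proposition~\ref{localizations of S}, $S_P$ is a regular local ring of dimension $1$, hence a DVR; its maximal ideal is $PS_P = P$, and its residue field is the quotient field $K$ of $S/P$. Writing $\pi \colon S_P \to K$ for the reduction, I would next verify that $S = \pi^{-1}(S/P)$, realizing $S$ as the pullback of $S/P \subset K$ along $\pi$; this is the step demanding care, and it uses precisely the clause $P = PS_P$ of Lemma~\ref{principal case} to show that every element of $S_P$ whose residue lies in $S/P$ actually lies in $S$. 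Once that is in hand, the classical composition of valuation rings---$S/P$ a DVR of $K$ and $S_P$ a DVR with residue field $K$---presents $S$ as a valuation domain of $F$ with value group $\mathbb{Z} \oplus \mathbb{Z}$ ordered lexicographically, which is indeed discrete.
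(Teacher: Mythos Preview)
Your proof is correct and follows essentially the same route as the paper's: establish via Lemma~\ref{principal case} that $S/P$ is a DVR with $P = PS_P$, show $S_P$ is a DVR, and conclude by the standard composite-valuation construction that $S$ is a valuation domain with value group $\mathbb{Z}\oplus\mathbb{Z}$. The only cosmetic differences are that the paper invokes Theorem~\ref{flat} (via the Noetherian hull $S[1/x]$) rather than Proposition~\ref{localizations of S} to see that $S_P$ is a one-dimensional regular local ring, and the paper asserts the final composite-valuation step without spelling out the pullback $S=\pi^{-1}(S/P)$ as you do; your treatment of that last step is in fact a bit more explicit.
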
  

\begin{proof}  If $S$ is a valuation ring with discrete value group, then by Theorem~\ref{valuation criterion}, $\dim S \leq 2$, and since the value group of $S$ is discrete, $S$ has a principal maximal ideal. Conversely, suppose that $\dim S \leq 2$ and $S$ has a principal maximal ideal. 
If $\dim S = 1$, then $S$ is necessarily a DVR, so suppose that $\dim S = 2$. 
With the notation of Lemma~\ref{principal case}, $S/P$ is a DVR and $PS_P = P$. Moreover, by Theorem~\ref{flat}, there is $x \in S$ such that $xS$ is primary for the maximal ideal and $S[1/x]$ is a regular Noetherian domain. Since $S_P$ is a localization of $S[1/x]$ and $\dim S_P = 1$, it follows that $S_P$ is a DVR. This and the fact that $S/P$ is a DVR and $PS_P = P$ imply 
 that $S$ is a valuation domain with discrete value group.    
\end{proof}

\begin{corollary} \label{Abhyankar cor}  {\em (Abhyankar \cite[Lemma 12]{Abh})} If $R$ is a regular local ring with $\dim R  = 2$, then the set of Shannon extensions of $R$ is precisely the set of zero-dimensional valuation overrings of $R$ that   dominate $R$.
\end{corollary}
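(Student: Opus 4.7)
My plan is to prove the two inclusions separately. For the inclusion Shannon extensions $\subseteq$ zero-dimensional valuation overrings dominating $R$, I will use the structural results already developed in the paper to show such an $S$ is a valuation ring, and then verify that its residue field is algebraic over $R/\m$. For the reverse inclusion, I will invoke Abhyankar's results on local quadratic transforms along a valuation ring in dimension two, which are cited in the introduction.

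For the forward direction, let $S$ be a Shannon extension of $R$. Since $\dim R=2$ and Setting~\ref{setting 1}(2) gives $\dim R_{i-1}\geq\dim R_i\geq 2$, each $R_i$ has dimension $2$, so $\dim S\leq 2$ by Remark~\ref{dim remark}. The possibility $\dim S=0$ is excluded because $S$ dominates $R$, so $\m\subseteq N$ and $\m\neq 0$. If $\dim S=1$, the unlabeled corollary immediately following Theorem~\ref{flat} gives $S=V$, the boundary valuation ring. If $\dim S=2$, the contrapositive of Corollary~\ref{d-1} forces $N$ to be principal, and Corollary~\ref{dim 2 principal} then yields that $S$ is a valuation ring with discrete value group. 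In either case $S$ is a valuation ring birationally dominating $R$. To confirm that $S/N$ is algebraic over $R/\m$, I would use that each step $R_i\to R_{i+1}$ factors through $R_i[\m_i/x]/xR_i[\m_i/x]\cong (R_i/\m_i)[T]$, a polynomial ring in one variable, and every maximal ideal of such a ring has finite algebraic residue field over $R_i/\m_i$; induction then shows each $R_i/\m_i$ is algebraic over $R/\m$, and so is the directed union $S/N=\bigcup_i R_i/\m_i$.

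For the reverse direction, let $V$ be a zero-dimensional valuation overring of $R$ that dominates $R$, and form the sequence $\{R_i\}$ of local quadratic transforms of $R$ along $V$. By Abhyankar's \cite[Proposition~4]{Abh}, this sequence is finite if and only if $V$ is a prime divisor of $R$, equivalently, its residual transcendence degree equals $\dim R-1=1$. Since $V$ is zero-dimensional by hypothesis, the sequence is infinite. Abhyankar's \cite[Lemma~12]{Abh}, cited in the introduction, then gives $V=\bigcup_{i\geq 0}R_i$, so $V$ is a Shannon extension of $R$.

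The main obstacle is the dichotomy in the forward direction: correctly identifying which earlier corollary applies in each dimensional case and handling the auxiliary residue-field computation. Once this is set up, both directions reduce to assembling results already proved in the paper together with the two cited facts of Abhyankar.
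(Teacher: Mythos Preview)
Your reverse direction is circular. The corollary is labeled ``(Abhyankar \cite[Lemma 12]{Abh})'' because its purpose is to \emph{recover} Abhyankar's result from the machinery of this paper; the opening paragraph of Section~8 makes this intent explicit. Invoking \cite[Lemma 12]{Abh} to conclude $V=\bigcup_{i\geq 0}R_i$ is therefore assuming exactly what the corollary sets out to prove. Citing \cite[Proposition~4]{Abh} for the infiniteness of the sequence is legitimate, but the final step is not.

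The repair is immediate and uses your own forward-direction work. Given a zero-dimensional $V$ dominating $R$, form the sequence $\{R_i\}$ along $V$; it is infinite by \cite[Proposition~4]{Abh}, so $S=\bigcup_i R_i$ is a Shannon extension with $S\subseteq V$. Your forward argument already shows that every Shannon extension of a two-dimensional $R$ is a valuation ring. Since $V$ birationally dominates the valuation ring $S$, necessarily $S=V$, and $V$ is a Shannon extension. This is exactly the paper's strategy: it proves only this direction explicitly and lets the converse follow from it.

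For the $\dim S=2$ case in the forward direction you take a different route from the paper. You use the contrapositive of Corollary~\ref{d-1} to force $N$ principal and then apply Corollary~\ref{dim 2 principal}; the paper instead argues by contradiction, using Seidenberg's Lemma to produce a rank-$2$ valuation overring $U$ with a DVR $U_P$ dominating $S$, and then invokes Corollary~\ref{DVR case}. Your route is cleaner and stays within results already packaged as corollaries. One small point: your residue-field argument claims that the relevant prime of $(R_i/\m_i)[T]$ is maximal, which tacitly uses $\dim R_{i+1}=2$; this should be said. The paper sidesteps this verification entirely, since once $S=V$ with $V$ assumed zero-dimensional, the residual algebraicity of $S$ is automatic.
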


\begin{proof} Let $V$ be a zero-dimensional valuation overring of $R$ that dominates $R$. Then $V$ determines an infinite  sequence of local quadratic transforms $\{R_i\}$ and hence there is a 
a Shannon extension $S$ of $R$ with $S \subseteq V$.  Since $\dim R = 2$, 
then $\dim R_i = 2$ for all $i \geq 0$.   If $\dim S = 1$, then $S$ is a valuation ring by Theorem~\ref{valuation criterion}. Suppose that $\dim S = 2$, and suppose by way of contradiction that   $S$ is not a valuation ring. Then there exists $u$ in the quotient field of $S$ such that $u,u^{-1} \not \in S$. Hence, with $N$ the maximal ideal of $S$, 
 Seidenberg's Lemma \cite[Theorem 7]{Sei} implies $NS[u]$ is a nonzero nonmaximal prime of $S[u]$. Therefore $S[u]$ is contained in a valuation ring $U$ with $\dim U  = 2$  such that the nonzero nonmaximal prime ideal $P$ of $U$ is centered on $NS[u]$. Since $\dim U=2$  and $U$ is an overring of the two-dimensional Noetherian domain $R$, the value group of $U$ is discrete \cite[Theorem 1]{Abh}, and hence $U_P$ is a DVR that dominates $S$. However, by Corollary~\ref{DVR case}, this implies $S$ is a DVR, contrary to assumption. Thus $S$ is a valuation ring, and since $V$ dominates $S$, $S = V$, which proves the corollary.  
\end{proof} 

In general, it is not enough that the maximal ideal of a Shannon extension $S$ is principal  to guarantee that $S$ is a valuation ring; see  Example~\ref{Shannon example 4.7}.   
However, with an additional assumption, $S$ must be a valuation ring:

\begin{corollary} \label{another DVR}  A Shannon extension $S$ of a regular local ring $R$ is a DVR if and only if 
the maximal ideal of $S$ is principal and $S$ is dominated by a rank 1 valuation ring. 
\end{corollary}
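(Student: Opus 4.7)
The forward direction is essentially immediate: if $S$ is a DVR, then its maximal ideal is (by definition) principal, and $S$ is itself a rank one valuation ring dominating $S$, so we may take $U=S$. So the content is in the reverse implication, and the plan is to derive that implication as a quick consequence of Lemma~\ref{principal case} combined with the archimedean nature of any rank one valuation ring.

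For the reverse direction, suppose $N = xS$ is principal and there is a rank one valuation ring $U$ birationally dominating $S$ (after intersecting with the field of fractions of $S$ we may assume the inclusion is birational; the only use of $U$ is to produce an archimedean dominating overring, and this is preserved by intersection). First, I would invoke Lemma~\ref{principal case}: since $N$ is principal generated by $x$, the ideal $P := \bigcap_{i>0} N^i = \bigcap_{i>0} x^i S$ is a prime ideal of $S$, and $S/P$ is a DVR with $P = PS_P$. Thus to conclude that $S$ itself is a DVR it is enough to show $P = 0$.

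For $P=0$, I would use that a rank one valuation ring is archimedean. Concretely, the value group of $U$ embeds into $\mathbb{R}$, and since $U$ dominates $S$ we have $x \in {\mathfrak M}_U$, so $v(x) > 0$. For any nonzero $t \in U$, eventually $n\,v(x) > v(t)$, so $t \notin x^n U$. Hence
\[
\bigcap_{n>0} x^n U = 0,
\]
and since $S \subseteq U$ we conclude $P = \bigcap_{n>0} x^n S \subseteq \bigcap_{n>0} x^n U = 0$. Therefore $P=0$ and $S = S/P$ is a DVR.

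There is no real obstacle here; the only subtle point worth flagging is making sure the hypothesis ``dominated by a rank one valuation ring'' is used in its full strength (both rank one, which delivers archimedean, and domination, which places $x$ in the maximal ideal of $U$ so that its valuation is strictly positive). This is exactly the strengthening of Corollary~\ref{DVR case} needed to avoid assuming $U$ is discrete: for the conclusion we do not need $U$ itself to be a DVR; we only need that $U$ witnesses the archimedean property at the single element $x$ that generates $N$.
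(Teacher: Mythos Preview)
Your proof is correct and follows essentially the same approach as the paper: both use that domination by a rank~1 valuation ring forces $S$ to be archimedean, and then combine this with Lemma~\ref{principal case}(2) (principal maximal ideal) to get $P=\bigcap_{n>0}x^nS=0$, whence $S=S/P$ is a DVR. Your argument is in fact slightly more direct than the paper's, which reaches the same conclusion via the intermediate observation $\dim S=1$ and an appeal to Corollary~\ref{dim 2 principal}, a detour your proof shows to be unnecessary.
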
 

\begin{proof}  Suppose that the maximal ideal of $S$ is principal and $S$ is dominated by a rank 1 valuation ring. The latter property implies that 
 $S$ is archimedean, and hence since $S$ has a principal maximal ideal,
 Lemma~\ref{principal case}(2) forces  $\dim S = 1$. 
Therefore,  by Corollary~\ref{dim 2 principal}, $S$ is a DVR.  The converse is clear.
 %
\end{proof}

Following Shannon  \cite{Sha}, the quadratic  sequence $\{R_i\}$ determined by the Shannon extension $S$   {\it switches strongly infinitely often} if $\epd(S/R)$ is empty, and following Granja \cite{Gra}, the sequence $\{R_i\}$  is  {\it height 1 directed} if $\epd(S/R)$ has exactly one element.  In Proposition~\ref{known} we show how to recover some results of Granja from our point of view. We use the notion of an essential prime divisor  from Definition~\ref{epd def}.

\begin{lemma} \label{equate} If $S$ is a Shannon extension of the regular local ring $R$ with $\dim S >1$, then $\epd(S/R) = \epd(S)$. 
\end{lemma}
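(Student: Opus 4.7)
The plan is to prove the two inclusions $\epd(S) \subseteq \epd(S/R)$ and $\epd(S/R) \subseteq \epd(S)$ separately, using Proposition~\ref{localizations of S} to identify localizations of $S$ at nonmaximal primes with localizations of the $R_i$'s at their contractions, and invoking Lemma~\ref{Lipman}(3) to handle the interplay with transforms.

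For $\epd(S) \subseteq \epd(S/R)$, I would start from $W = S_Q$ with $Q$ a height $1$ prime of $S$. Since $\dim S > 1$, $Q$ is nonmaximal, and Proposition~\ref{localizations of S} identifies $S_Q$ with $(R_k)_{Q \cap R_k}$ for all $k \gg 0$. Because $Q$ has height $1$ in $S$, $S_Q$ has dimension $1$ and is regular local, hence a DVR; this forces $Q \cap R_k$ to have height $1$ in $R_k$, so $W = S_Q \in \epd(R_k)$. Combined with $S \subseteq W$, this shows $W \in \epd(S/R)$.

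For the reverse inclusion, given $W = (R_j)_P$ with $P$ a height $1$ prime of $R_j$ and $S \subseteq W$, I would let $Q = PW \cap S$ be the center of $W$ on $S$. The first check is that $Q$ is nonzero and nonmaximal: nonzero because $Q \cap R_j = P \ne 0$, and nonmaximal because otherwise $W$ would dominate $S$ and Corollary~\ref{DVR case} would force $S$ to be a DVR, contradicting $\dim S > 1$. Proposition~\ref{localizations of S} then gives $S_Q = (R_k)_{Q \cap R_k}$ for $k \gg 0$. To show $S_Q = W$, I would fix $k \geq j$ large and apply Lemma~\ref{Lipman}(3) to the containment $R_j \subseteq R_k \subseteq W = (R_j)_P$: the transform $P^{R_k}$ is the unique prime of $R_k$ lying over $P$ and satisfies $(R_k)_{P^{R_k}} = (R_j)_P = W$. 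By uniqueness of the center, $P^{R_k} = PW \cap R_k = Q \cap R_k$, so $S_Q = (R_k)_{Q \cap R_k} = W$. Hence $Q$ has height $1$ in $S$ and $W = S_Q \in \epd(S)$.

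The main obstacle is the bookkeeping in the reverse direction: one must identify the center of $W$ on $R_k$ (for large $k$) with both $Q \cap R_k$ and the transform $P^{R_k}$, so that the two natural ways of reaching $W$ from $R_k$---through $S$ via Proposition~\ref{localizations of S}, and through the transform sequence via Lemma~\ref{Lipman}(3)---agree. Lemma~\ref{Lipman}(3) supplies exactly this compatibility. The hypothesis $\dim S > 1$ enters essentially only once, via Corollary~\ref{DVR case}, to rule out $Q = N$.
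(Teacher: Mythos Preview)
Your proof is correct. The two directions are handled in essentially the same spirit as the paper, but with one notable difference in the $\epd(S/R) \subseteq \epd(S)$ direction.

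For that inclusion the paper uses a shorter sandwich argument: with $W = (R_j)_P$ and $Q = {\ff M}_W \cap S$, one has $R_j \setminus P \subseteq S \setminus Q$, so $(R_j)_P \subseteq S_Q$, while $S \setminus Q \subseteq W^{\times}$ gives $S_Q \subseteq W$; hence $S_Q = W$ directly, and $Q$ has height~$1$ because $W$ is one-dimensional. This bypasses both Lemma~\ref{Lipman}(3) and Corollary~\ref{DVR case}. Your route through transforms is correct but does more work than needed; the compatibility you carefully verify between $P^{R_k}$ and $Q \cap R_k$ is automatic from the sandwich. For the $\epd(S) \subseteq \epd(S/R)$ direction, the paper phrases things via the Noetherian hull $T$ (a localization of some $R_i$, with $T \subseteq S_P$ since $P \ne N$), whereas you invoke Proposition~\ref{localizations of S} directly; these are equivalent, and yours is arguably the more transparent formulation. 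One small correction to your closing remark: the hypothesis $\dim S > 1$ is used in both directions, not just once---in the first direction it guarantees that a height~$1$ prime $Q$ is nonmaximal.
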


\begin{proof} 
Let $V \in \epd(S/R)$. Then there exists a height 1 prime ideal $P_i$ of $R_i$ for some $i>0$ such that $S \subseteq V = (R_i)_{P_i}$.  Let $P$ be the contraction of the maximal ideal of $V$ to $S$. Then $S_P = V$, and hence $P$ has height 1 and $V \in \epd(S)$. Conversely, suppose that $P$ is a height 1 prime ideal of $S$. Since $\dim S>1$, we have $T \subseteq S_P$, where $T$ is as in Theorem~\ref{flat} and $T$ is a localization of $R_i$ for some $i$. Therefore, $S_P$ is a one-dimensional localization of $R_i$, which forces $(R_i)_{P \cap R_i} = S_P$ and $P \cap R_i$ to be a height 1 prime ideal of $R_i$.  Consequently, $S_P \in \epd(S/R)$.  
\end{proof}

 \begin{proposition} \label{known} {\em {(cf.~Granja \cite[Props.~7 and ~14, Thm.~13]{Gra})}}
 Assume Setting~\ref{setting 2}.  
 \begin{itemize}

 \item[{\em (1)}]  $\{R_i\}$ switches strongly infinitely often  if and only if $S$ is a valuation ring with $\dim S  = 1$.   
 \item[{\em (2)}] $\{R_i\}$ is height 1 directed if and only if $S$ is a valuation ring with $\dim S=  2$  and 
value group $G \oplus {\mathbb{Z}}$, where $G$ is a subgroup of ${\mathbb{Q}}$ and the sum is ordered lexicographically.

   \item[{\em (3)}]   $S$ is a valuation ring if and only if $\{R_i\}$ switches strongly infinitely often or $\{R_i\}$ is height 1 directed.

\end{itemize}
 \end{proposition}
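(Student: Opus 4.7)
The heart of the argument is already encoded in Remark~\ref{the epds} (which summarizes results of Shannon and Granja) and Theorem~\ref{valuation criterion}; what remains is bookkeeping, translating between the cardinality conditions on $\epd(S/R)$ built into the definitions and the valuation-theoretic conclusions. My plan is to dispatch each of the three parts by direct appeal to these two results.

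For part (1), I will observe that by definition $\{R_i\}$ switches strongly infinitely often precisely when $\epd(S/R) = \emptyset$, and by Remark~\ref{the epds} this is equivalent to $S$ being a rank $1$ valuation domain, i.e., a valuation ring with $\dim S = 1$. For part (2), by definition the sequence is height $1$ directed precisely when $|\epd(S/R)| = 1$, which by Remark~\ref{the epds} is equivalent to $S$ being a rank $2$ valuation ring, i.e., a valuation ring with $\dim S = 2$. I then need to verify that in this case $S$ coincides with its boundary valuation ring $V$: indeed, $V$ birationally dominates $S$ by Corollary~\ref{equation V cor}, and a valuation overring of a valuation ring $S$ that dominates $S$ must equal $S$ itself, since such overrings arise as localizations at primes of $S$ and the dominance condition forces localization at the maximal ideal. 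With $V = S$ in hand, Theorem~\ref{valuation criterion}(3)(b) delivers a rank $2$ lexicographic value group of the required form.

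For part (3), I will combine the first two parts with Theorem~\ref{valuation criterion}: the latter tells us that $S$ is a valuation ring iff $\dim S = 1$ or $\dim S = 2$ with the stated value group condition, and parts (1) and (2) identify these two dimensions with the sequence switching strongly infinitely often or being height $1$ directed, respectively. The only mildly delicate point, apart from a notational comparison between $\mathbb{Z} \oplus G$ and $G \oplus \mathbb{Z}$ as lexicographically ordered groups, is the identification $V = S$ in the height $1$ directed case, so that the value-group description in Theorem~\ref{valuation criterion}(3)(b) for $V$ transfers to $S$. Beyond that, the proof is a direct synthesis of Remark~\ref{the epds} and Theorem~\ref{valuation criterion}, and I do not anticipate any further technical obstacle.
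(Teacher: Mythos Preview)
Your argument is logically valid, but it differs from the paper's approach in a way that undercuts the purpose of the proposition. You invoke Remark~\ref{the epds}, which itself cites Shannon's \cite[Proposition~4.18]{Sha} and Granja's \cite[Theorem~13]{Gra} for the equivalences $|\epd(S/R)| = 0 \Leftrightarrow S$ is a rank~$1$ valuation ring and $|\epd(S/R)| = 1 \Leftrightarrow S$ is a rank~$2$ valuation ring. But Proposition~\ref{known} is precisely meant to \emph{recover} these results from the paper's own framework; that is the stated aim of Section~8. So while your proof is formally correct once Remark~\ref{the epds} is granted, it is circular in spirit: you are citing the very theorems the proposition sets out to reprove.

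The paper avoids this by using Lemma~\ref{equate} (the identification $\epd(S/R) = \epd(S)$ when $\dim S > 1$) together with Theorem~\ref{valuation criterion}. For part~(1), if $\epd(S/R) = \emptyset$, then Lemma~\ref{equate} forces $S$ to have no height~$1$ prime with DVR localization; but if $\dim S > 1$ the Noetherian hull $T = S[1/x]$ is a regular Noetherian domain of positive dimension, so such primes exist---contradiction. Hence $\dim S = 1$, and Theorem~\ref{valuation criterion} finishes. For part~(2), Lemma~\ref{equate} converts $|\epd(S/R)| = 1$ into $S$ having a unique height~$1$ prime, and Theorem~\ref{valuation criterion}(2) applies directly. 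This route is genuinely independent of the Shannon--Granja results. Your observation that $V = S$ whenever $S$ is a valuation ring is correct and is implicit in the paper's treatment as well, and your handling of part~(3) matches the paper's once (1) and (2) are in place.
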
 
 
\begin{proof} (1) Suppose $|\epd(S/R)| = 0$.  Then by Lemma~\ref{equate} there does not exist a height $1$ prime ideal $P$ of $S$ such that $S_P$ is a DVR. Since $S[1/x]$ is a regular Noetherian domain and $x$ is primary for the maximal ideal of $S$  (with notation as in Setting~\ref{setting 2}), it follows that $\dim S = 1$. 
%
Hence by Theorem~\ref{valuation criterion},  $S$ is a valuation ring. 
Conversely, if $S$ is  a valuation ring with $\dim S = 1$, then  there exist no overrings properly between $S$ and its quotient field. Consequently, since $S$ is not a DVR (this possibility is ruled out by Setting~\ref{setting 1}(3)), $\epd(S,R)$ is empty. 

(2)  Suppose that $|\epd(S/R)| = 1$. Then by (1), $\dim S>1$, and hence by Lemma~\ref{equate}, $\epd(S/R) = \epd(S)$. Therefore, $S$ has only one height 1 prime ideal, and hence by Theorem~\ref{valuation criterion}, $S$ is a valuation ring with value group ${\mathbb{Z}} \oplus G$, where $G$ is a subgroup of ${\mathbb{Q}}$. The converse follows from Lemma~\ref{equate}. 

(3) In light of (1) and (2), it only needs to be observed that if $S$ is a valuation ring, then $|\epd(S/R)| = |\epd(S)| \leq 1$. 
\end{proof} 



 
 

{\it Acknowledgment.} The authors are grateful to  Angel Granja for a number of  helpful comments on an earlier version of this article. 
\medskip

{\noindent}{\bf References}

\end{document}